\declaretheorem[name=Theorem]{mytheo}
\declaretheorem[name=Lemma]{mylem}
\declaretheorem[name=Proposition]{myprop}
\declaretheorem[name=Definition]{mydef}
\declaretheorem[name=Assumption]{myas}
\newcommand{\abs}[1]{\left\vert{#1}\right\vert}
\newcommand{\vo}[1]{\boldsymbol{#1}}
\newcommand{\cj}[1]{\overline{{#1}}}
\newcommand{\ip}[2]{\left\langle{#1},{#2}\right\rangle}
\newcommand{\lMAX}{\lambda}
\newcommand{\lMIN}{\varsigma}
\newcommand{\lmax}[1]{\lambda_{{#1}}}
\newcommand{\lmin}[1]{\varsigma_{{#1}}}
\newcommand{\fmax}[1]{\Phi^+_{{#1}}}
\newcommand{\fmin}[1]{\Phi^-_{{#1}}}
\newcommand{\pmax}[1]{\Psi^+_{{#1}}}
\newcommand{\pmin}[1]{\Psi^-_{{#1}}}
\newcommand{\nMAX}{\nu^+}
\newcommand{\nMIN}{\nu^-}
\newcommand{\nmax}[1]{\nu^+_{{#1}}}
\newcommand{\nmin}[1]{\nu^-_{{#1}}}
\newcommand{\cjnmax}[1]{\cj{\nu}^+_{{#1}}}
\newcommand{\cjnmin}[1]{\cj{\nu}^-_{{#1}}}
\newcommand{\acrit}[1]{\alpha_{{#1}}}
\newcommand{\bcrit}[1]{\beta_{{#1}}}
\def\BibTeX{{\rm B\kern-.05em{\sc i\kern-.025em b}\kern-.08em
    T\kern-.1667em\lower.7ex\hbox{E}\kern-.125emX}}
\begin{document}
\title{Collective rhythm design in coupled mixed-feedback systems through dominance and bifurcations}
\author{Omar Juarez-Alvarez, Alessio Franci
\thanks{This work was supported by the Belgian Government through the FPS Policy and Support (BOSA) grant NEMODEI.}
\thanks{Omar Juarez-Alvarez is with the Department of Mathematics, Universidad Nacional Autónoma de México, Ciudad de México, México (e-mail: pat\_jualv@ciencias.unam.mx).}
\thanks{Alessio Franci is with 
the Montefiore Institute, University of Liege, Liege, Belgium, with the Department of Mathematics, Universidad Nacional Autónoma de México, Ciudad de México, México, and with the WEL Research Institute, Wavre, Belgium (e-mail: alessio.franci83@gmail.com).}}

\maketitle

\begin{abstract}
The theory of mixed-feedback systems provides an effective framework for the design of robust and tunable oscillations in nonlinear systems characterized by interleaved fast positive and slow negative feedback loops. The goal of this paper is to extend the mixed-feedback oscillation design framework to networks. To this aim, we introduce a network model of coupled mixed-feedback systems, ask under which conditions it exhibits a collective oscillatory rhythm, and if, and how, this rhythm can be shaped by network design. In the proposed network model, node dynamics are nonlinear and defined by a tractable realization of the mixed-feedback structure. Coupling between nodes is also nonlinear and defined by a tractable abstraction of synaptic coupling between neurons. We derive constructive conditions under which the spectral properties of the network adjacency matrix fully and explicitly determine both the emergence of a stable network rhythm and its detailed rhythmic profile, i.e., the pattern of relative oscillation amplitudes and phase differences. Our theoretical developments are grounded on ideas from dominant systems and bifurcation theory. They provide a new framework for the analysis and design of nonlinear network rhythms.
\end{abstract}

\begin{IEEEkeywords}
Network oscillations, Rhythm control, Bifurcation theory, Dominant systems, Neuromorphic engineering
\end{IEEEkeywords}

\section{Introduction}
\label{sec:introduction}

The theory of {\it mixed-feedback systems} has been recently developed to describe and understand excitable, spiking, and rhythmic biological behaviors, as well as a tool to design such behaviors in artificial systems~\cite{sepulchre2019control,sepulchre2018excitable,Franci2018,ribar2019neuromodulation}. Mixed-feedback systems are identified by the co-existence of negative and positive feedback loops at different timescales. As such, they inherit and merge the best of both positive and negative feedback control: they combine the robust reliability of digital automata, rooted in positive feedback control, and the adaptive flexibility of analog systems, rooted in negative feedback control. From cell cycles~\cite{tsai2008robust} and circadian rhythms~\cite{smolen2001modeling}, to neuron electrical activity~\cite{drion2015neuronal} and the dynamics of cortical neural circuits~\cite{douglas1991functional}, the mixed-feedback structure is a hallmark of adaptive biological behaviors across scale. The mixed-feedback theory developed in~\cite{sepulchre2019control,sepulchre2018excitable,franci2019sensitivity,franci2014modeling} has paved the way to design similar adaptive, bio-inspired behaviors in engineered systems, including neuromorphic or ``spiking'' control systems~\cite{sepulchre2022spiking} and their embodiment in simple robots~\cite{bartolozzi2022embodied,cathcart2024excitable}.

In the recent work~\cite{che2023dominant}, the authors derived both graphical and algebraic criteria to ensure the existence of robust and tunable oscillations in a {\it single} mixed-feedback system. The goal of this paper is to undertake the same program for {\it networks of coupled mixed-feedback systems}. In biology, mixed-feedback systems such as neurons or molecular clocks are ubiquitously interconnected in networks for coordinated activity. Examples are neural central pattern generators~\cite{marder2001} and distributed circadian clocks~\cite{noguchi17,yamaguchi03,juarez23}.
Yet, a network theory of coupled mixed-feedback system is still lacking.

The first contribution of the paper is to introduce a mathematically tractable network model of coupled mixed-feedback systems. Node dynamics are defined by a simple two-dimensional, fast-slow, mixed-feedback loop. These dynamics are akin to other simple dynamics used to model different collective behaviors, e.g., the first-order integrator of consensus dynamics~\cite{moreau2005stability,olfati2004consensus} or the phase model of weakly coupled oscillator networks~\cite{kuramoto1984chemical,izhikevich2008phase}. Furthermore, they can readily be translated to analog electronics for embodied intelligence applications~\cite{ribar2019neuromodulation}. Coupling between mixed-feedback systems happens through saturated additive interactions mediated by the fast variables of each node. This form of coupling is highly reminiscent of synaptic coupling between biological neurons~\cite{abbott1998modeling}.

Our second contribution is to formulate a collective rhythm control problem. Namely, we ask if it is possible to design the adjacency matrix of the mixed-feedback network to {\it i)} ensure a stable network rhythm, that is, the convergence of almost all solutions to a limit cycle along which all the nodes oscillate with the same period but possibly different amplitudes and phases, and {\it ii)} design and control the collective rhythmic profile, that is, the specific pattern of relative oscillation amplitudes and phases among the nodes.

The third and main contribution of the paper, which generalizes the preliminary results presented in~\cite{juarez21}, is to show that the collective rhythm control problem can be solved constructively, and to derive such a constructive solution. The derivation of the proposed solution relies fundamentally on the fast-slow nature of mixed-feedback dynamics and on the key assumption that the fast dynamics of the mixed-feedback network satisfy a dominance condition. This condition can easily be checked by inspection of the leading eigenstructure of the network adjacency matrix. Given these two ingredients, we use linear algebraic and bifurcation theory methods to show that, if the network adjacency matrix has a simple real leading eigenvalue or a simple pair of complex conjugate leading eigenvalues, then a network rhythm can be ignited by sufficiently large self-positive feedback at the node level or by sufficiently large coupling strength at the network level. We characterize the rhythm stability and we show that the leading eigenvector of the network adjacency matrix precisely determines the rhythmic profile.

The fourth contribution of the paper is to present a series of algebraic results, instrumental to deriving our solution to the collective rhythm control problem, that reveal a non-trivial but tractable mapping between the eigenstructure of the network adjacency matrix and the Jacobian matrix of the network nonlinear dynamics. This mapping allows us to prove the fundamental result that certain dominance properties of the network fast subsystem are inherited by the full fast-slow network dynamics. The timescale separation between the mixed-feedback fast positive and slow negative loops is again key in proving these results.

Finally, the fifth contribution is to describe one out of many simple methodologies stemming from our results to design arbitrary collective rhythms in networks of coupled mixed-feedback systems. Jointly, these contributions define a novel framework for the design of rich rhythmic behaviors in networks of biological or bio-inspired mixed-feedback systems.

The paper is structured as follows. In Section~\ref{sec:math}, some mathematical preliminaries, definitions and notation are presented. The coupled mixed-feedback systems model is introduced and interpreted in Section~\ref{sec:model}. The collective rhythm control problem is rigorously formulated in Section~\ref{sec:relevance}, together with the graphical notation used to represent its solution throughout the paper, some preliminary remarks on the constructive nature of the proposed solutions, the importance of the mixed-feedback structure, and a non-technical summary to guide the reader through the more technical part of the paper in the subsequent sections. The main (dominance) assumption used to solve the collective rhythm control problem is presented and discussed in Section~\ref{sec: fast dom}. Section~\ref{sec:struc} presents results revealing how the mixed-feedback structure ensures a tractable characterization of the mapping between the spectra of the network adjacency matrix and of the model Jacobian. Section~\ref{sec:dom} uses these results to prove the existence of parameter combinations such that the model Jacobian is singular and how, close to singularity, the Jacobian inherits the leading eigenstructure of the network adjacency matrix. Section~\ref{sec:synchro} finally uses these algebraic results to show that the Jacobian singularity of Section~\ref{sec:dom} corresponds to a Hopf bifurcation at which a collective rhythm emerges; it then derives expressions for the stability of this rhythm, and characterizes its profile in terms of the Jacobian leading eigenstructure. Section~\ref{sec:build} presents a couple of simple, rigorous algorithms to design arbitrary collective rhythms in networks of coupled mixed-feedback systems. A discussion and future research directions are provided in Section~\ref{sec:disc}.
Due to space limitations, all technical proofs have been moved to an Appendix that can be found in the paper extended preprint~\cite{juarezalvarez2024analysis}.

\section{Mathematical preliminaries}\label{sec:math}

Real $N$-dimensional vectors are denoted in bold $\vo{x},\vo{v},\vo{\zeta},\ldots,$ and are defined entry-wise as $\vo{x}=(x_1,x_2,\ldots,x_N)\in\mathds{R}^N$. $\vo{0}_N=(0,\ldots,0)\in\mathds{R}^N$ denotes the zero vector, $\vo{1}_N=(1,\ldots,1)\in\mathds{R}^N$ the all-ones vector, and $\vo{e}_j=(\delta_{jk})_{k=1}^N\in\mathds{R}^N$ canonical vectors, where $\delta_{jk}$ is the Kronecker delta. Complex numbers are either expressed in Cartesian form, $z=a+ib$, with $a\in\mathds{R}$, $b\in\mathds{R}$, or in polar form, $z=\rho e^{i\theta}$, for $\rho\geqslant0$ and $\theta\in\mathds{S}^1$, where $\mathds{S}^1:=\mathds{R}\, \mathrm{mod}\,2\pi$. The conjugate of a complex number $z=a+ib$ is $\cj{z}=a-ib$ and its modulus is $\abs{z}=\sqrt{z\cj{z}}$. Complex vectors $\vo{z}\in\mathds{C}^N$ are represented as $\vo{z}=\vo{a}+i\vo{b}$, where real tuples $\vo{a}$, $\vo{b}$ are the real and imaginary parts, respectively, of complex vector $\vo{z}$. The conjugate of a complex vector, $\bar{\vo{z}}=\vo{a}-i\vo{b}$, is computed entry-wise. A complex vector $\vo{z}\in\mathds{C}^N$ is said to be \emph{modulus-homogeneous} if there exists $\kappa\geqslant0$ such that $\abs{\vo{z}_j}=\kappa$, for all $j\in\{1,\ldots,N\}$. The entry-wise Hadamard product of two complex vectors $\vo{z}$ and $\vo{y}$ is denoted by $\vo{z}\odot\vo{y}\in\mathds{C}^N$ and defined entry-wise by $(\vo{z}\odot\vo{y})_i=z_iy_i$. We define two inner products: the matricial inner product $\vo{v}^t\vo{w}$, for real vectors, and the complex inner product $\ip{\vo{v}}{\vo{w}}=\cj{\vo{v}}^t\vo{w}$, for complex vectors. Two indexed sets $U=\{\vo{u}_j\in\mathds{C}^N:\,j\in\{1,\ldots,k\}\}$ and $V=\{\vo{v}_j\in\mathds{C}^N:\,j\in\{1,\ldots,k\}\}$ form a \emph{biorthogonal system} if for every $n\in\{1,\ldots,k\}$ and $m\in\{1,\ldots,k\}$ it holds that $\ip{\vo{u}_n}{\vo{v}_m}=\delta_{nm}.$

Given a parameterized vector field $\vo{f}(\vo{x};p)$ in $\mathds{R}^n$ which is $k$ times differentiable, and an ordered set $\gamma=\{\vo{v}_1,\ldots,\vo{v}_k\}\subseteq\mathds{R}^n$, the $k$th order directional derivative of $\vo{f}$ along $\gamma$ computed at $(\vo{x},p)$ is defined as
\begin{equation*}
    \begin{split}
        (\mathrm{d}^k\vo{f})&_{\vo{x},p}(\vo{v}_1,\ldots,\vo{v}_k):=\dfrac{\partial\,\,\,\,}{\partial t_1}\ldots\dfrac{\partial\,\,\,\,}{\partial t_k}\vo{f}\left(\vo{x}+\sum_{i=1}^kt_i\vo{v}_i;p\right)\\
        =&\sum\dfrac{\partial^k\vo{f}}{\partial x_{i_1}\ldots\partial x_{i_k}}(\vo{x};p)(\vo{v}_1)_{i_1}\ldots(\vo{v}_k)_{i_k},
    \end{split}
\end{equation*}
where the last sum is computed over all the $k$th order partial derivatives of $f$.

We denote the set of $N\times N$ real and complex square matrices as $\mathds{R}^{N\times N}$ and $\mathds{C}^{N\times N}$, respectively. We denote the transpose of matrix $A$ as $A^t$. Any $N$-tuple $\vo{x}$ is considered as a $N\times1$ column matrix, and its transpose $\vo{x}^t$ as a $1\times N$ row matrix. The zero matrix is denoted by $O_N=(0)_{ij}\in\mathds{R}^{N\times N}$, and the identity matrix by $I_N=(\delta_{ij})\in\mathds{R}^{N\times N}$. A matrix $A=(a_{ij})$ is said to be positive (non-negative) if all of its entries $a_{ij}$ are positive (non-negative). Positive and non-negative vectors are similarly defined. Given a set of complex numbers $\{z_1,\ldots,z_N\}$ we denote $D=\mathrm{diag}(z_1,\ldots,z_N)\in\mathds{C}^{N\times N}$ as the diagonal matrix whose entries are given by $D_{ij}=z_i\delta_{ij}$. A {\it switching matrix} $M$ is a diagonal matrix whose diagonal entries are all either 1 or -1~\cite{bizyaeva22}; observe that $M=M^{-1}$ for any switching matrix. Two matrices $A$, $B$ are said to be \emph{switching equivalent} if there exists a switching matrix $M$ such that $B=M^{-1}AM$; thus, they are similar and cospectral. A matrix is said to be irreducible if it is not similar to an upper-triangular matrix. The\emph {spectrum} of matrix $A$, denoted by $\sigma(A)$, is the collection of all of its eigenvalues (also called $A$-eigenvalues), $\sigma(A)=\{\mu_1,\ldots,\mu_N\}$ (repeated eigenvalues appear with their algebraic multiplicity). We use $\vo{v}\in\mathds{C}^N$ and $\vo{w}\in\mathds{C}^N$ to represent left and right eigenvectors satisfying $\vo{v}^tA=\mu\vo{v}^t$ and $A\vo{w}=\mu\vo{w}$, respectively, for some $\mu\in\sigma(A)$. The spectral radius of matrix $A$ is defined as $\rho(A)=\max\{\abs{\mu}:\,\mu\in\sigma(A)\}$.

An eigenvalue $\mu\in\sigma(A)$ is said to be simple if its algebraic multiplicity is equal to one. An element $\mu^*\in\sigma(A)$ is said to be a \emph{leading eigenvalue} if it is simple and satisfies $\mathrm{Re}(\mu^*)\geqslant\mathrm{Re}(\mu)$ for all $\mu\in\sigma(A)$; an element $\mu^*\in\sigma(A)$ is said to be a \emph{strictly leading eigenvalue} if it is simple and satisfies $\mathrm{Re}(\mu^*)>\mathrm{Re}(\mu)$ for all $\mu\in\sigma(A)\backslash\{\mu^*,\cj{\mu}^*\}$. A left or right eigenvector is a \emph{(strictly) leading eigenvector} if it is associated to a (strictly) leading eigenvalue. 

An $N\times N$ matrix is said to be \emph{in-regular} if it has the all-ones vector $\vo{1}_N$ as a right eigenvector.
We order the elements $\mu_1,\ldots,\mu_N$ of $\sigma(A)$ decreasingly by their real parts, i.e., ${\rm Re}(\mu_j)\geqslant {\rm Re}(\mu_{j+1})$ for all $j\in\{1,\ldots, N-1\}$. Simple conjugate eigenvalues are ordered decreasingly by their imaginary parts. Real repeated eigenvalues are ordered arbitrarily as consecutive elements; for repeated non-real eigenvalues, we write them in conjugate pairs and order each pair by their imaginary parts. If some real and non-real eigenvalues have equal real parts, real ones appear first, followed by the non-real ones which are ordered by their imaginary parts. If $A$ has a real strictly leading eigenvalue, then we denote it as $\mu_1\in\sigma(A)$; if $A$ has a conjugate couple of non-real strictly leading eigenvalues, then we denote them as $\mu_1$ and $\mu_2=\cj{\mu}_1$, such that $\mathrm{Im}(\mu_1)>0$. 

A $2N$-tuple $\vo{z}$ may be denoted in {\it block-wise notation} by $\vo{z}=(\vo{x}^t\vert\vo{y}^t)^t$, where $\vo{x}$ and $\vo{y}$ are $N$-tuples. The block-wise notation for a $2N\times 2N$ matrix $M$ is
$M = \left(\begin{array}{c|c}
    A & B  \\
    \hline
    C & D
\end{array}\right)$,
where $A$, $B$, $C$, and $D$ are $N\times N$ matrices. Operations between block-wise defined matrices and vectors are such that
$\left(\begin{array}{c|c}
    A & B  \\
    \hline
    C & D
\end{array}\right)\left(\begin{array}{c}
    x  \\
    \hline
    y
\end{array}\right)=\left(\begin{array}{c}
    Ax + By  \\
    \hline
    Cx + Dy
\end{array}\right)$.

A {\it weighted and signed digraph}, or {\it network}, is defined as a triplet $\mathscr{G}=(V,E,A)$, where $V=\{v_1,\ldots,v_N\}$ is the set of vertices, $E$ is the set of edges connecting the elements in $V$, and $A$ is the weighted adjacency matrix whose entry $A_{jk}$ determines the weight and the sign of the connection from the $k$th node to the $j$th node. A digraph is said to be \emph{strongly connected} if for any two vertices there exists a directed path connecting them, which is equivalent to $A$ being irreducible \cite[Theorem 3.2.1]{brualdi}. A weighted digraph is in-regular if there exists $d\in\mathds{R}$ such that the sum of edge weights into each node is $d$, which is equivalent to $A$ having $\vo{1}_N$ as a right eigenvector.

\section{Mixed-feedback Networks}\label{sec:model}

\begin{figure}
    \centering
    \includegraphics[width=0.48\textwidth]{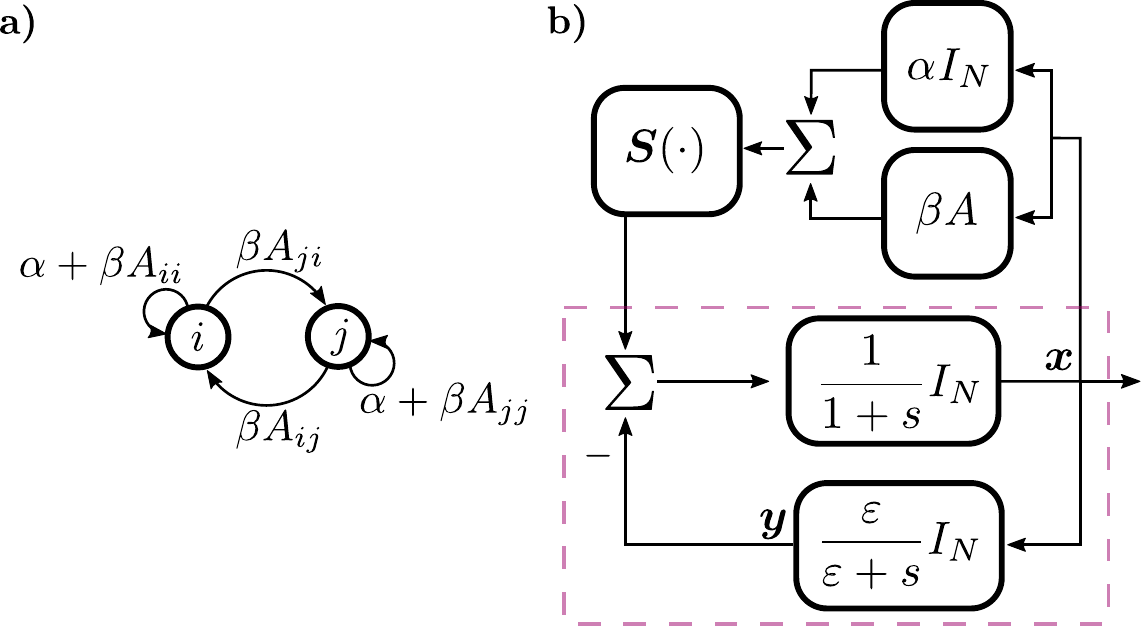}
    \caption{\textbf{a)} Network representation of model~\eqref{eq:ei} for a pair of nodes $i$ and $j$ showing self-loops and inter-node edges. \textbf{b)} Block diagram representation of model~\eqref{eq:ei}. The vector $\vo{x}$ is the system output. The vector $\vo{y}$ is a  lagged version of $\vo{x}$ providing slow-negative feedback on $\vo{x}$. The purple dashed box indicates the node-level linear system component. The blocks outside the dashed blue box are nonlinear, networked, and provide fast  positive feedback on $\vo{x}$.}
    \label{fig:net_2}
\end{figure}

Consider a  $2N$-dimensional dynamical system
\begin{subequations}\label{eq:ei}
    \begin{align}
        \dot{x}_j&=-x_j-y_j+S\left(\alpha x_j+\beta\sum_{k=1}^NA_{jk}x_k\right),\\
        \dot{y}_j&=\varepsilon(x_j-y_j), \quad\quad j=1,\ldots,N,
    \end{align}
\end{subequations}
where $0<\varepsilon\ll1$ is a small positive time constant and $S$ is a \emph{locally odd sigmoid satisfying} (a) $S(0)=0$, (b) $\forall\,x\in\mathds{R}:\,S'(x)>0$, and (c) $\mathrm{argmax}\, S'(x)=0$; assuming $S$ is at least three-times differentiable, these conditions imply $S''(0)=0$ and $S'''(0)\leqslant0$. In the sequel we assume $S'(0)=1$ and $S'''(0)<0$; for simulations, we use $S(\cdot)=\tanh(\cdot)$. Observe the distinct nature of variables $x_j$ and $y_j$: while the former is the {\it output} of node $j$ that is transmitted through the network to other nodes, the latter provides fully local {\it slow negative feedback} on $x_j$. As we will see, the {\it local} slow negative feedback provided by the $y_j$s turns nonlinear network interactions mediated by the $x_j$s into robust and easily controllable collective, i.e., {\it global}, network oscillations.

Matrix $A\in\mathds{R}^{N\times N}$ is the adjacency matrix of a network $\mathscr{G}$ with vertices $V=\{1,\ldots,N\}$. Parameter $\alpha\geqslant 0$ models the average node-level self-loop weight, while parameter $\beta\geqslant 0$ models the average nodal interaction strength. The choice of introducing network-wide parameters $\alpha$ and $\beta$ is tailored to defining a few tuning dials (bifurcation parameters) that control the network behavior, but since no topological conditions are imposed over matrix $A$, the model remains general. Our goal is to study the emergence of network oscillations and their rhythmic profile in model~\eqref{eq:ei} as parameters $\alpha$ and/or $\beta$ are varied. In this sense,~\eqref{eq:ei} can be considered a model of \emph{rhythmogenesis}, i.e. capable of describing the transition from non-rhythmic to rhythmic behavior~\cite{shamir19}.

Figure~\ref{fig:net_2}a illustrates how $\alpha$, $\beta$, and $A$ determine the weights of both inter-node edges and self-loops in a network described by~\eqref{eq:ei}. Figure~\ref{fig:net_2}b provides an equivalent block diagram representation of~\eqref{eq:ei} as the MIMO feedback interconnection of $N$ identical linear systems (blue dashed box) with a vector saturation nonlinearity. Each linear system is the {\it negative feedback} interconnection of a fast lag ($x_j$) and a {\it slow} lag ($y_j$). The parameter $0<\varepsilon\ll 1$ sets the timescale of the slow lag. The saturation nonlinearity aggregates the effects of both local nonlinear interactions (self-loops in Figure~\ref{fig:net_2}a), with gain $\alpha I_N$, and networked interactions (inter-node edges in Figure~\ref{fig:net_2}a), with gain $\beta A$, among the $N$ linear systems. Nonlinear local interactions provide fast {\it positive feedback} on each $x_j$ with gain $\alpha$. In other words, the local node-level dynamics
\begin{equation}\label{eq:ei_indi}
    \begin{split}
        \dot{x}_j&=-x_j-y_j+S(\alpha x_j),\\
        \dot{y}_j&=\varepsilon(x_j-y_j),        
    \end{split}
\end{equation}
obtained by setting $A=O_N$ in~\eqref{eq:ei}, are a simple realization of a fast-positive, slow-negative, mixed-feedback system. Nonlinear (saturated) network interactions describe recurrent interconnections among these $N$ mixed-feedback systems.

The specific form~\eqref{eq:ei_indi} of the node-level mixed-feedback dynamics is mathematically tractable because the only nonlinear term is a locally-odd saturation nonlinearity that localizes in range the node-level positive feedback. An equivalent realization was thoroughly analyzed in~\cite{franci2019sensitivity,ribar2019neuromodulation}. As shown in~\cite{ribar2019neuromodulation}, this realization can also be implemented in standard analog neuromorphic electronics. Finally, the mixed-feedback interconnection of linear systems and saturation nonlinearities is at the foundation of the mathematical tractability of fast-and-flexible decision-making models~\cite{leonard2024fast}.

Because $0<\varepsilon\ll1$, model~\eqref{eq:ei} is fast-slow or {\it singularly perturbed}. Our analysis does not explicitly use singular perturbation methods~\cite{kokotovic1999singular,teel2003unified,arnold1995geometric} because, given the non-linear, multi-equilibrium nature of the fast dynamics~(\ref{eq:ei}a), the analyses of both the layer and reduced problems associated to~\eqref{eq:ei} are, in general, intractable. The timescale separation imposed by a sufficiently small $\varepsilon$ is nonetheless key for the tractability~\eqref{eq:ei} and, specifically, to ensure that its nonlinear behavior can be shaped through network design. In particular, all the results in Sections~\ref{sec:dom} require a sufficiently small $\varepsilon$ to hold.

Model~\eqref{eq:ei} can be equivalently stated as $\dot{\vo{z}}=\vo{f}(\vo{z};\alpha,\beta)$,
where vector field $\vo{f}=(f_1,\ldots,f_{2N}):\mathds{R}^{2N}\to\mathds{R}^{2N}$ is defined entry-wise by
\begin{align*}
    f_j(\vo{z};\alpha,\beta)&=-\vo{z}_j-\vo{z}_{j+N}+S\left(\alpha \vo{z}_j+\beta\sum_{k=1}^NA_{jk}\vo{z}_k\right)\\
    f_{j+N}(\vo{z};\alpha,\beta)&=\varepsilon(\vo{z}_j-\vo{z}_{j+N}), \quad\quad j=1,\ldots,N.
\end{align*}

Since $S$ is a locally odd sigmoid, it follows that $\vo{z}_0=(\vo{0}_N^t\vert\vo{0}_N^t)^t$ is always an equilibrium of model~\eqref{eq:ei}. Evaluating the Jacobian matrix at this equilibrium readily yields the following block-wise expression for the $2N\times 2N$ matrix $J_0=J_{\alpha,\beta,A,\varepsilon}(\vo{0}_N,\vo{0}_N)$,
\begin{equation}\label{eq:jacob}
    J_0:=\left(\begin{array}{c|c}
        (\alpha-1)I_N+\beta A & -I_N  \\
        \hline
        \varepsilon I_N & -\varepsilon I_N 
    \end{array}\right).
\end{equation}

\section{Control of rhythmic networks: problem formulation and results overview}\label{sec:relevance}

In this section we introduce the notion of a network rhythmic profile  and its graphical representation. We then formulate the main control problem attacked in this paper and use the graphical representation of network rhythmic profile to easily visualize the performance of a tentative solution to this problem. We then overview some key aspects of the proposed solution in this paper and provide a summary of the main technical results presented in the next sections.

\subsection{Rhythmic profiles}

We say that a network of coupled oscillators is \emph{rhythmic} if its trajectories (at least for some initial conditions) converge to a limit cycle or, in other words, if all of its nodes exhibit asymptotically periodic oscillations with the same period $T>0$. The {\it rhythmic profile} of the network is then defined by the amplitudes and phase differences of node oscillations. To formalize these ideas, we first introduce the notion of {\it oscillating function}, as a generalization of simple periodic functions such as $\sin(\cdot)$ and $\cos(\cdot)$.

\begin{mydef}[{\bf Oscillating function}]
    A function $r:\mathds{R}\to\mathds{R}$ is called \emph{oscillating} if it is $T$-periodic, with $T>0$, and there exists $0<T_{1/2}<T$ such that (1) $r(0)=r(T_{1/2})=r(T)=0$, (2) $r(t)>0$ for $t\in(0,T_{1/2})$, (3) $r(t)<0$ for $t\in(T_{1/2},T)$, and (4) its range is normalized such that $\max\{r(t):\,t\in[0,T]\}-\min\{r(t):\,t\in[0,T]\}=2$.
\end{mydef}

\begin{mydef}[\textbf{Rhythmic network and rhythmic profile}]  \label{def:rp}
Consider a network $\mathscr{G}$ with vertices $V=\{1,\ldots,N\}$. Suppose that the state of each vertex is described by a state variable $\vo{x}_{j}\in\mathds{R}^n$ and that the network state $\vo{X}=(\vo{x}_1^t\vert\cdots\vert\vo{x}_N^t)^t$ evolves according to $\dot{\vo{X}}=\vo{f}(\vo{X})$, where $\vo{f}:\mathds{R}^{Nn}\to\mathds{R}^{Nn}$ is smooth. Let $x_{j1}=(\vo{x}_j)_1$ be the output of node $j$. We say that the network $\mathscr{G}$ is \emph{rhythmic} if there exist $N$ oscillating functions $r_1,\ldots,r_{N}:\mathds{R}\to\mathds{R}$, $N$ \emph{amplitudes} $\sigma_1,\ldots,\sigma_{N}\in\mathds{R}$, $N$ \emph{phases} $\varphi_1,\ldots,\varphi_{N}\in [0,2\pi)$, and an open set $U\subset\mathds{R}^{Nn}$, such that the solution $\vo{X}(t)$ to $\dot{\vo{X}}=\vo{f}(\vo{X})$, $\vo{X}(0)=\vo{X}_0$, satisfies $\lim_{t\to\infty}\left|x_{j1}(t)-\sigma_j r_j\left(t+\dfrac{T\varphi_j}{2\pi}\right)\right| =0$
for all $j=1,\ldots,N$, whenever $\vo{X}_0\in U$. The \emph{rhythmic profile} of $\mathscr{G}$ is the $N$-tuple $(\sigma_1e^{i\varphi_1},\ldots,\sigma_Ne^{i\varphi_N})\in\mathds{C}^N$.
\end{mydef}

We can represent the rhythmic profile of a network on the complex unitary disc $\mathds{D}^2=\{z\in\mathds{C}:\,\abs{z}\leqslant1\}$ by expressing all amplitudes and phases relative to the amplitude and phase of the oscillator with largest amplitude.

\begin{mydef}[\textbf{Relative rhythmic profile}] \label{def:rel_rp}
Consider a rhythmic network $\mathscr{G}$ and suppose that $\sigma_1>0$ and $\sigma_1\geqslant \sigma_{j}$ for all $j\neq 1$. Then the \emph{relative rhythmic profile} of $\mathscr{G}$ is defined as the $N$-tuple $(1,\rho_2e^{i\theta_2},\ldots,\rho_Ne^{i\theta_N})\in(\mathds{D}^2)^N$, where $\rho_j=\tfrac{\sigma_j}{\sigma_1}$ and $\theta_j=\varphi_j-\varphi_1$.
\end{mydef}

\begin{figure}
    \centering
    \includegraphics[width=0.48\textwidth]{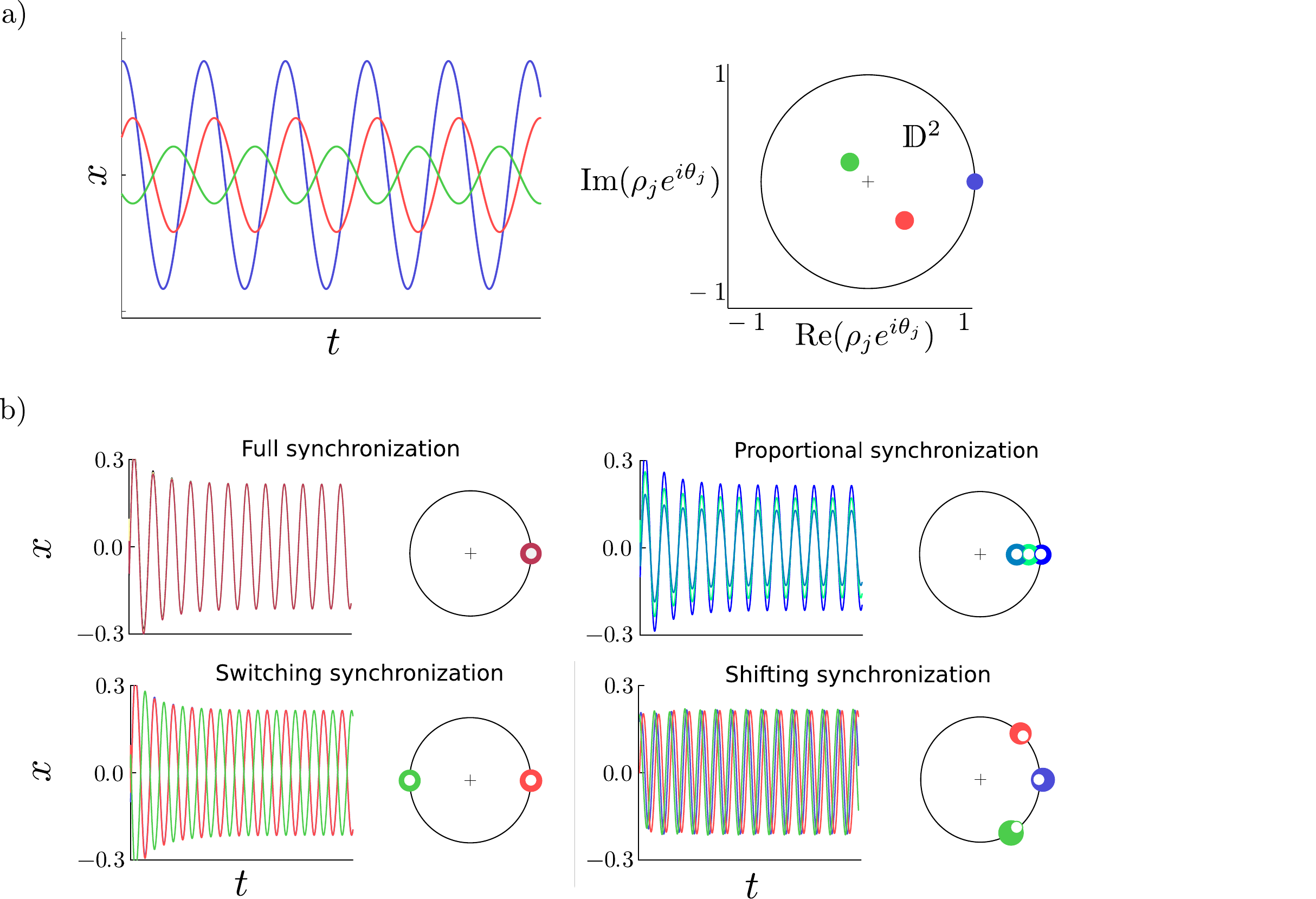}
    \caption{\textbf{a)} Output of a rhythmic network and its amplitude and phase relationships geometrically represented through its relative rhythmic profile in $\mathds{D}^2$. Oscillators are color-coded in both representations. \textbf{b)} Common rhythmic profiles and their prediction. In each panel, the activity pattern of the nodes in a rhythmic network is accompanied by the color-coded geometrical representation of its relative rhythmic profile. Solid and hollow points represent predicted (by our theory) and observed rhythmic profiles, respectively. See text for details.}
    \label{fig:profiles}
\end{figure}

The relative rhythmic profile can be represented geometrically as the set $\{1,\rho_2e^{i\theta_2},\ldots,\rho_Ne^{i\theta_N}\}$ of $N$ points in the complex unitary disc $\mathds{D}^2$. Figure~\ref{fig:profiles}a shows the relative rhythmic profile of networks with $N=3$ oscillators. In what follows, when depicting rhythmic profiles, including Figure~\ref{fig:profiles}b, we will omit the real and imaginary axes, and solely represent the elements of the profile in $\mathds{D}^2$. The relative rhythmic profile allows for a concise classification of common rhythmic behaviors.

\begin{mydef}[\textbf{Common rhythmic profiles}] \label{def:rp_types}
Let $\mathscr{G}$ be a rhythmic network and $(1,\rho_2e^{i\theta_2},\ldots,\rho_Ne^{i\theta_N})$ denote its relative rhythmic profile. Suppose $r_1=\cdots=r_N$, i.e., each node is oscillating with the same periodic wave form but possibly different amplitude and phases. The oscillators are then said to be \emph{phase-locked}. Moreover, if one of the following holds for all $j\in\{1,\ldots,N$\}, then the network is said to be:
\begin{itemize}
    \item \emph{Fully synchronized}, if $\rho_j=1$ and $\theta_j=0\,\mathrm{mod}\,2\pi$.
    \item  \emph{Proportionally synchronized}, if $\theta_j=0,\mathrm{mod}\,2\pi$.
    \item \emph{Switching synchronized} if $\rho_j=1$ and $\theta_j\in\{0,\pi\}\,\mathrm{mod}\,2\pi$.
    \item \emph{Shifting synchronized} if $\rho_j=1$.
\end{itemize}
\end{mydef}

These definitions generalize some rhythmic phenomena found in the literature. For instance, proportional and switching synchronization are the network-level versions of in-phase and anti-phase synchronization~\cite{krishnagopal16, williams13}, respectively. Shifting synchronization with homogeneous phase differences describes a behaviour similar to \emph{travelling waves} \cite{ramamoorthy22}, lag synchronization~\cite{rosenblum97}, and \emph{splay-phase behaviour}~\cite{nichols92, zillmer07}, under constant or non-constant phase differences~\cite{zou09}.
Figure~\ref{fig:profiles}b presents the network oscillations and the resulting geometric representations of the rhythmic profiles introduced in Definition \ref{def:rp_types}. Observe that all of these network oscillations are phase-locked (as per Definition~\ref{def:rp}) and asymptotically periodic (as per Definition~\ref{def:rp_types}).

\subsection{Predicted vs observed relative rhythmic profiles}

Solid dots in the geometrical representations of rhythmic profiles, such as those found in Figure~\ref{fig:profiles}b, denote the {\it predictions} obtained with the techniques and results developed in this paper, which solely use the spectral properties of the network adjacency matrix. Hollow dots represent the measured relative rhythmic profiles of network oscillations; as seen in Figure~\ref{fig:profiles}, our predictions match the observed behavior with very small (often zero) errors.

\begin{restatable}{myrem}{rem:measure}
    Using the results in Section~\ref{sec:dom}, it is possible to show that the prediction error of the proposed collective rhythm design solution is small, in the sense that the Hausdorff distance between the observed and predicted rhythmic profile close to the Hopf bifurcation at which the network rhythms emerge (Theorems~\ref{theo:rhythm} and~\ref{theo:rhythm_beta}) is $O(\varepsilon^{0.5})$. 
\end{restatable}

\subsection{Constructive rhythm control, network structure, and bifurcations}

\begin{figure}
    \centering
    \includegraphics[width=0.48\textwidth]{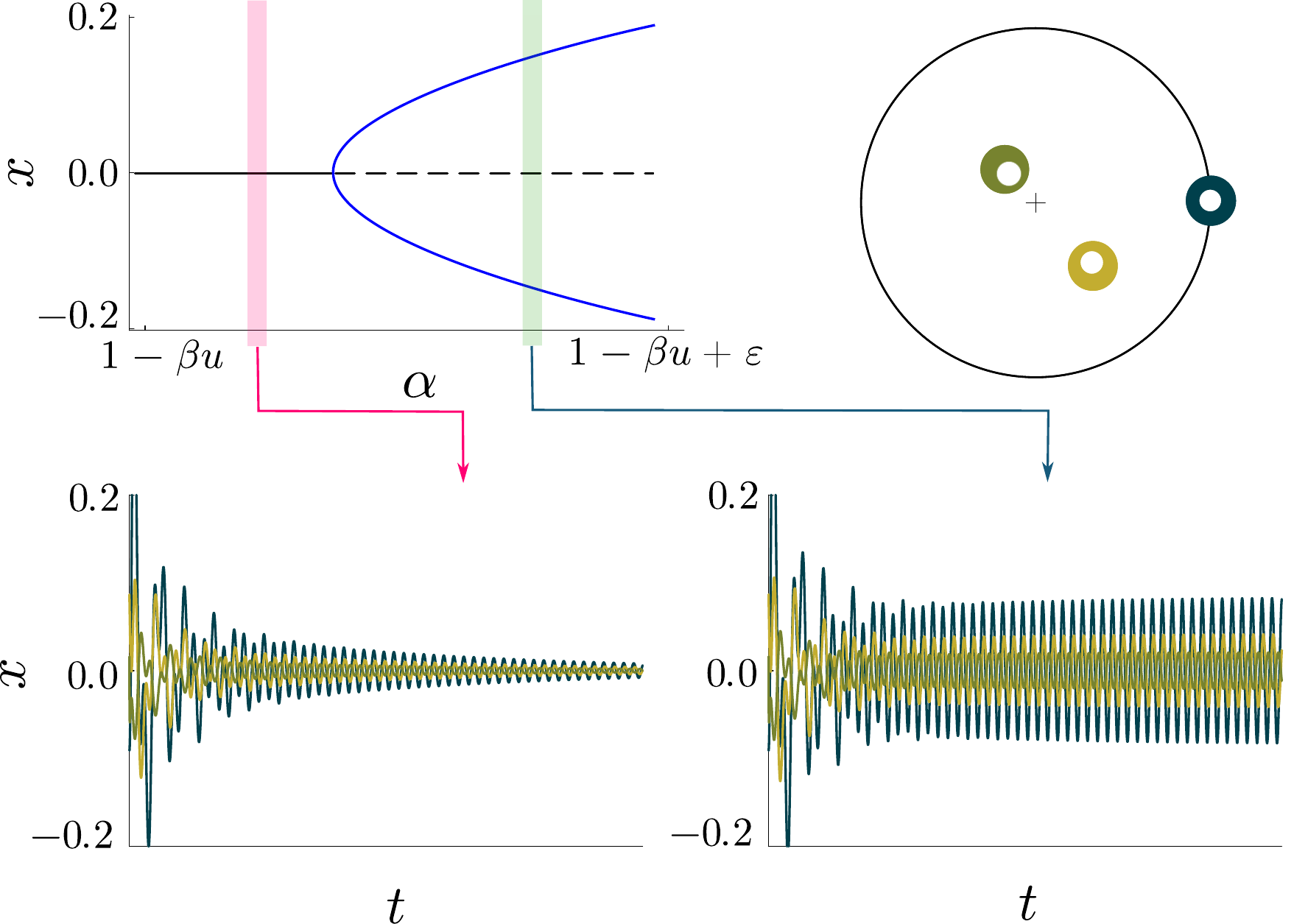}
    \caption{A Hopf bifurcation triggers the transition from damped to sustained oscillations in model~\eqref{eq:ei}. \emph{Top left:} bifurcation diagram of the Hopf bifurcation. Black continuous (dashed) lines represent branches of stable (unstable) equilibria. Blue continuous lines represent branches of stable limit cycles. The loss of stability of the equilibrium point and the appearance of a stable limit cycle are the hallmark of a supercritical Hopf bifurcation. \emph{Top right:} rhythmic profile of an $N=3$ network; as in Fig.~\ref{fig:profiles}, full and hollow nodes represent predicted and measured rhythmic profiles, respectively. \emph{Bottom:} pre- and post-bifurcation evolution for variables $x_j$ in the same network as the top plots.}
    \label{fig:bifcontrast}
\end{figure}

How network structure determines a rhythmic profile is a core question of this work. We will see how, in networks of fast-slow coupled oscillators~\eqref{eq:ei}, our techniques allow us not only to precisely {\it analyze} (and thus predict) the emergent rhythmic profile, but also to {\it design} networks to exhibit a desired rhythmic profile. Indeed, we can formulate the two following problems within the context of rhythm control and prediction:
\begin{itemize}
    \item \emph{Direct problem:} Can we successfully predict the activity pattern of a given rhythmic network by looking at its adjacency matrix?
    \item \emph{Inverse problem:} Can we construct an adjacency matrix such that a particular rhythmic profile is attained?
\end{itemize}
We can solve both problems simultaneously because of the constructive nature of our methods. We develop such a constructive methodology by relying on the Hopf bifurcation theorem, as sketched by Figure~\ref{fig:bifcontrast}. To develop some intuition on the importance of the Hopf bifurcation for model~\eqref{eq:ei}, let us consider local node-level dynamics~\eqref{eq:ei_indi}. The Jacobian matrix of this system at equilibrium $(x_0,y_0)=(0,0)$ is
$J_0=\left(\begin{array}{cc}
   \alpha-1  & -1 \\
   \varepsilon  & -\varepsilon 
\end{array}\right)$,
whose eigenvalues are
$\lambda_{1,2}=\tfrac{\alpha-1-\varepsilon\pm\sqrt{(\alpha+\varepsilon-1)^2-4\varepsilon(2-\alpha)}}{2}$,
from whence it follows that, whenever $\varepsilon\in(0,1)$, subsystem \eqref{eq:ei_indi} undergoes a Hopf bifurcation with bifurcation parameter $\alpha$ at a critical value $\alpha^*=1+\varepsilon$. Close to bifurcation, the period is $T=2\pi\sqrt{\varepsilon(1-\varepsilon)}^{-1}$.

The case of uncoupled oscillators, although tractable, is not satisfactory from the engineering perspective because phase differences will depend on initial conditions and will not be robustly maintained in the presence of disturbances. In this paper we will develop methods to predict (or design) the emergence of a {\it networked} Hopf bifurcation that will lead to robust, initial condition-independent rhythmic profiles in a way that is fully determined by network structure. In particular, we will show how the leading eigenstructure of the network adjacency matrix fully determines the emergent rhythmic profile.

\subsection{Summary of results}

In the following sections we present the results needed to prove the rich variety of rhythmic profiles exhibited by model~\eqref{eq:ei} for certain, provable parameter combinations, as well as the possibility of  shaping these rhythmic profiles through network design. These results illustrate different specific aspects of the general fact that {\it mixed-feedback systems generate mathematically tractable nonlinear behaviors.}

Section~\ref{sec: fast dom} states and interprets the main ``dominance'' assumption. The only condition required by this assumption is an elementary algebraic one, namely, that the adjacency matrix $A$ has simple (either real or complex) \emph{leading eigenvalues}. We discuss the fundamental system-theoretic implications of this assumption for the dominance properties (in the sense of~\cite{forni2018differential}) of the fast dynamics of~\eqref{eq:ei}, as well as geometric and graph-theoretical criteria to inspect it or design it.

Section~\ref{sec:struc} exploits the block matrix form of the model Jacobian~\eqref{eq:jacob}, enforced by the mixed-feedback structure of~\eqref{eq:ei}, to rigorously characterize the relationship between the spectral properties of the adjacency matrix $A$ and of the Jacobian $J_0$ of~\eqref{eq:ei} at the origin. These results thus show that, in networks of mixed-feedback systems described by~\eqref{eq:ei}, the network topology determines the network linearized dynamical behavior in a transparent and tractable way.

Section~\ref{sec:dom} builds upon these results to show that, if the dominance assumption of~Section~\ref{sec: fast dom} is satisfied and if $\varepsilon$ is sufficiently small (i.e., the mixed-feedback timescale separation is large enough), then there always exist critical values for the self-loop strength $\alpha$ or the interconnection strength $\beta$ in~\eqref{eq:ei}, such that the Jacobian $J_0$ has a pair of purely imaginary, simple, leading eigenvalues inherited by the (either real or complex) leading eigenstructure of $A$. It further characterizes the sensitivity of the leading eigenvalues of $J_0$ to changes in $\alpha$ and $\beta$. The results presented in Section~\ref{sec:struc} thus show that, in networks of mixed-feedback systems described by~\eqref{eq:ei}, dominance properties of the full fast-slow system can easily be designed by suitably designing the network topology.

Finally, Section~\ref{sec:synchro} states the existence of a Hopf bifurcation (Theorems~\ref{theo:rhythm} and~\ref{theo:rhythm_beta}) through both model parameters $\alpha$ and $\beta$ as a consequence of the results presented in previous sections. Both bifurcation parameters are equivalent in the sense that the dynamical behaviors emerging at bifurcation are the same. The key difference is that changing parameter $\alpha$ models a local, node-level, modulation of positive feedback strength, whereas changing parameter $\beta$ models a distributed, network-level, modulation of positive feedback strength. Which parameter is relevant is therefore a matter of specific applications or interpretations of the model. Furthermore, a linear approximation to the system exactly at bifurcation reveals that the rhythmic pattern of the network is determined by the leading eigenvector of adjacency matrix $A$ (see Proposition~\ref{prop:synchro}). This result is then applied to the construction of networks with specific rhythmic patterns in two particular cases under System~\eqref{eq:ei} in Section~\ref{sec:build}.

\section{A fast dominance assumption}\label{sec: fast dom}

The key ingredients of our approach are the fast-slow nature of the oscillators and the following assumption.

\begin{myas}\label{as:main leading}
    The adjacency matrix $A$ in model~\eqref{eq:ei} has a strictly leading real eigenvalue $\mu_1>0$ or strictly leading complex conjugate eigenvalues $\mu_1,\mu_2=\cj{\mu}_1$, $\mathrm{Re}(\mu_1)>0$.
\end{myas}

Assumption~\ref{as:main leading} is key to our approach because it implies that the linearization at the origin of the fast dynamics~(\ref{eq:ei}a) of model~\eqref{eq:ei} possesses, for suitable $\alpha$ and $\beta$, low-dimensional {\it dominant dynamics}~\cite{forni2018differential}. Indeed, if $A$ has a strictly leading eigenvalue $\mu_1$, then the Jacobian $J_0^f=(\alpha-1)I_N+\beta A$ of the fast subsystem, i.e., the upper left block in~\eqref{eq:jacob}, has also a strictly leading eigenvalue $\alpha-1+\beta\mu_1$. Hence, for $\alpha= 1 - \beta\mu_1$ the strictly leading eigenvalue of $J^f$ is purely imaginary, while all non leading eigenvalues have negative real part. Invoking~\cite[Proposition~1]{forni2018differential}, the linearization of fast dynamics~(\ref{eq:ei}a) is 1-dominant, if $\mu_1$ is real, or 2-dominant, if $\mu_1$ is non-real.

Dominance implies that, close to the origin, the $N$-dimensional fast dynamics~(\ref{eq:ei}a) effectively behave as low-dimensional dynamics, in the sense that they possess an $N-1$ (1-dominance case) or $N-2$ (2-dominance case) ``fast'' or ``non-dominant'' subspace along which the trajectories converge exponentially to zero. All the interesting nonlinear dynamical behaviors are therefore restricted to the 1- or 2-dimensional ``slow'' or ``dominant'' complementary subspace, which makes the analysis tractable.

When either $\alpha$ or $\beta$ are increased and the strictly leading eigenvalue $\alpha-1+\beta\mu_1$ crosses the imaginary axis, the fast dynamics become linearly unstable: a bifurcation happens inside the dominant subspace $\mathcal H$, at which intrinsically nonlinear (but low-dimensional) dynamical behaviors, like multi-stability or limit cycle oscillations, can emerge.

In the remainder of the paper we will show that fast dominant dynamics are inherited by the full fast-slow system~\eqref{eq:ei}. Furthermore, the loss of stability of the dominant dynamics of model~\eqref{eq:ei} necessarily leads to limit cycle oscillations through a Hopf bifurcation. The dominant eigenstructure of $A$ fully determines the critical parameter values at which the bifurcation happens as well as the rhythmic profile associated to the emerging limit cycles, thus providing a constructive methodology for network rhythm control.

\subsection{Sufficient conditions for fast dominance}

A well known sufficient condition for the existence of a strictly leading real eigenvalue $\mu_1=\rho(A)>0$ with a positive eigenvector $\vo{w}_1$, is the Perron-Frobenius theorem~\cite[Theorem 8.4.4]{horn}, which applies to matrices with non-negative entries, and which was generalized to matrices with mixed-sign entries in~\cite{noutsos2006perron}. A second generalization stems from the notion of (structurally) balanced networks~\cite{harary1953notion,sontag2007monotone,altafini2012dynamics}. The adjacency matrix associated to a structurally balanced network possesses a strictly leading real positive eigenvalue, but this eigenvalue is neither guaranteed to be the spectral radius of the matrix nor is the corresponding eigenvector guaranteed to be positive. A summary of conditions under which a graph is defined by an adjacency matrix with a strictly leading real positive eigenvalue can be found in~\cite[Lemma 2.2]{bizyaeva2023multi}. To the best of the authors' knowledge, no general conditions were ever proved for the existence of a strictly leading complex conjugate eigenvalue pair.

\section{Characterization of the eigenstructure of $J_0$ in terms of the eigenstructure of $A$}
\label{sec:struc}

We start by showing how spectral properties of Jacobian matrix $J_0$ in Equation~\eqref{eq:jacob} are determined by those of adjacency matrix $A$, and vice versa. Namely, we derive formulae to compute the $2N$ $J_0$-eigenvalues in terms of the $N$ $A$-eigenvalues. We also show that the $J_0$-eigenvectors, both left and right, inherit the structure of corresponding $A$-eigenvectors. The technical proofs of the results in this section are provided in the online version \cite{juarezalvarez2024analysis}.

\begin{restatable}{mylem}{rquadfirst}
\label{lem:rquad}
    $\mu\in\sigma(A)$ if and only if there exists $\lambda\in\sigma(J_0)$ such that
\begin{equation}\label{eq:rquad}
  \lambda^2+(1+\varepsilon-\alpha-\beta\mu)\lambda+\varepsilon(2-\alpha-\beta\mu)=0,  
\end{equation}
    or, equivalently,
\begin{equation}\label{eq:rquad_eq}
    \mu=\dfrac{1-\alpha+\lambda+\tfrac{\varepsilon}{\varepsilon+\lambda}}{\beta}.
\end{equation}
    Moreover, for any $\mu\in\sigma(A)$, if $\vo{w}_x\in\mathds{C}^N$ is an associated right $A$-eigenvector, then $\vo{w}=(\vo{w}_x^t\vert\tfrac{\varepsilon}{\varepsilon+\lambda}\vo{w}_x^t)^t\in\mathds{C}^{2N}$ is a right $J_0$-eigenvector associated to $\lambda\in\sigma(J_0)$ satisfying condition~\eqref{eq:rquad}. Conversely, for any $\lambda\in\sigma(J_0)$, if $\vo{w}=(\vo{w}_x^t\vert\vo{w}_y^t)^t\in\mathds{C}^{2N}$ is the associated right $J_0$-eigenvector then necessarily 
\begin{equation}\label{eq:reigvec}
A\vo{w}_x=\mu\vo{w}_x,\quad\quad \vo{w}_y=\tfrac{\varepsilon}{\varepsilon+\lambda}\vo{w}_x,
\end{equation}
where $\mu\in\sigma(A)$ satisfies condition~\eqref{eq:rquad_eq}.    
\end{restatable}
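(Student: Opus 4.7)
The plan is to proceed by a direct block-wise computation on the eigenvalue equation $J_0\vo{w}=\lambda\vo{w}$, using the explicit form of $J_0$ given in~\eqref{eq:jacob}. Writing $\vo{w}=(\vo{w}_x^t\vert\vo{w}_y^t)^t$ with $\vo{w}_x,\vo{w}_y\in\mathds{C}^N$, the eigenvalue equation splits into the coupled linear system
\begin{align*}
    \bigl[(\alpha-1)I_N+\beta A\bigr]\vo{w}_x-\vo{w}_y &= \lambda\vo{w}_x,\\
    \varepsilon\vo{w}_x-\varepsilon\vo{w}_y &= \lambda\vo{w}_y.
\end{align*}
First I would argue that $\lambda=-\varepsilon$ cannot occur: substituting this value into the second equation forces $\vo{w}_x=\vo{0}_N$ (since $\varepsilon>0$), and then the first equation forces $\vo{w}_y=\vo{0}_N$ provided $\varepsilon\neq1$, which holds under the standing assumption $\varepsilon\in(0,1)$. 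One can cross-check this by plugging $\lambda=-\varepsilon$ into the quadratic in~\eqref{eq:rquad} and obtaining the nonzero value $\varepsilon$.

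With $\lambda\neq-\varepsilon$ in hand, the second block equation can be solved for $\vo{w}_y$, giving the second identity in~\eqref{eq:reigvec}, namely $\vo{w}_y=\tfrac{\varepsilon}{\varepsilon+\lambda}\vo{w}_x$. Substituting this into the first block equation and simplifying yields
\begin{equation*}
    \beta A\vo{w}_x=\Bigl(\lambda-(\alpha-1)+\tfrac{\varepsilon}{\varepsilon+\lambda}\Bigr)\vo{w}_x,
\end{equation*}
so that $\vo{w}_x$ is a right $A$-eigenvector (assuming it is nonzero; otherwise $\vo{w}=\vo{0}$ by the previous step) with eigenvalue $\mu$ satisfying exactly the rational identity~\eqref{eq:rquad_eq}. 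Multiplying through by $\varepsilon+\lambda$ and rearranging produces the quadratic~\eqref{eq:rquad}, establishing the equivalence of the two formulations and the first identity in~\eqref{eq:reigvec}.

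For the converse direction, given $\mu\in\sigma(A)$ with right eigenvector $\vo{w}_x$ and any root $\lambda$ of the quadratic~\eqref{eq:rquad}, I would simply verify by direct substitution that the candidate vector $\vo{w}=(\vo{w}_x^t\vert\tfrac{\varepsilon}{\varepsilon+\lambda}\vo{w}_x^t)^t$ satisfies $J_0\vo{w}=\lambda\vo{w}$, using $A\vo{w}_x=\mu\vo{w}_x$ to collapse the upper block and the identity obtained by dividing the quadratic by $\varepsilon+\lambda$ to recover the $\tfrac{\varepsilon}{\varepsilon+\lambda}$ coefficient in the lower block. Finally I would note that the correspondence $\lambda\mapsto\mu$ is well-defined (each $A$-eigenvalue produces two $J_0$-eigenvalues via the two roots of~\eqref{eq:rquad}), which is consistent with the dimensional count $\abs{\sigma(J_0)}=2N=2\abs{\sigma(A)}$. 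The computation is mechanical throughout; the only subtlety worth flagging explicitly is the exclusion of $\lambda=-\varepsilon$, which is what makes the rational reformulation~\eqref{eq:rquad_eq} equivalent to the polynomial form~\eqref{eq:rquad}.
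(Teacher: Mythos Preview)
Your proposal is correct and follows essentially the same approach as the paper: a direct block-wise computation on $J_0\vo{w}=\lambda\vo{w}$, exclusion of $\lambda=-\varepsilon$, solving the lower block for $\vo{w}_y$, and substituting into the upper block to obtain the $A$-eigenvalue relation, with the algebraic equivalence of~\eqref{eq:rquad} and~\eqref{eq:rquad_eq} checked separately. The only (harmless) slip is the clause ``provided $\varepsilon\neq1$'': once $\vo{w}_x=\vo{0}_N$, the first block equation reads $-\vo{w}_y=\lambda\vo{w}_x=\vo{0}_N$ with no dependence on $\varepsilon$, so no extra hypothesis is needed there; your cross-check via the quadratic (which evaluates to $\varepsilon\neq0$ at $\lambda=-\varepsilon$) is the cleaner justification and matches the paper's.
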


\begin{restatable}{mylem}{lquadfirst}\label{lem:lquad}
   For any $\mu\in\sigma(A)$, if $\vo{v}_x\in\mathds{C}^N$ is an associated left $A$-eigenvector then $\vo{v}=(\vo{v}_x^t\vert\tfrac{-1}{\varepsilon+\lambda}\vo{v}_x^t)^t\in\mathds{C}^{2N}$ is a left $J_0$-eigenvector associated to $\lambda\in\sigma(J_0)$ satisfying condition \eqref{eq:rquad}. Conversely, for any $\lambda\in\sigma(J_0)$, if $\vo{v}=(\vo{v}_x^t\vert\vo{v}_y^t)^t\in\mathds{C}^{2N}$ is the associated left $J_0$-eigenvector then necessarily
\begin{equation}\label{eq:leigvec}
\vo{v}_x^t A=\mu\vo{v}_x^t,\quad\quad \vo{v}_y=\tfrac{-1}{\varepsilon+\lambda}\vo{v}_x,
\end{equation}
where $\mu\in\sigma(A)$ satisfies condition \eqref{eq:rquad_eq}.
\end{restatable}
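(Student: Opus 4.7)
The plan is to mirror the strategy of Lemma~\ref{lem:rquad} but applied to the left-eigenvector equation $\vo{v}^t J_0=\lambda\vo{v}^t$, exploiting the block structure of $J_0$ in~\eqref{eq:jacob}. Writing $\vo{v}=(\vo{v}_x^t\vert\vo{v}_y^t)^t$ and expanding the block product, the single equation $\vo{v}^t J_0=\lambda\vo{v}^t$ splits into the pair
\begin{equation*}
\vo{v}_x^t\bigl[(\alpha-1)I_N+\beta A\bigr]+\varepsilon\vo{v}_y^t=\lambda\vo{v}_x^t,\qquad -\vo{v}_x^t-\varepsilon\vo{v}_y^t=\lambda\vo{v}_y^t.
\end{equation*}
The rest of the argument consists in manipulating these two equations algebraically.

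First I would handle the converse direction. From the second equation I solve for $\vo{v}_y$, obtaining $\vo{v}_y=-\tfrac{1}{\varepsilon+\lambda}\vo{v}_x$; this is the second identity in~\eqref{eq:leigvec}. Before doing so, I should check that $\lambda=-\varepsilon$ cannot occur: substituting $\lambda=-\varepsilon$ into~\eqref{eq:rquad} reduces the left-hand side to $\varepsilon>0$, so no such $\lambda$ can lie in $\sigma(J_0)$. Substituting the expression for $\vo{v}_y$ into the first equation then yields
\begin{equation*}
\vo{v}_x^t\,\beta A=\Bigl(\lambda-(\alpha-1)+\tfrac{\varepsilon}{\varepsilon+\lambda}\Bigr)\vo{v}_x^t,
\end{equation*}
which is exactly $\vo{v}_x^t A=\mu\vo{v}_x^t$ with $\mu$ given by~\eqref{eq:rquad_eq}, provided $\vo{v}_x\neq\vo{0}_N$. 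Nontriviality of $\vo{v}_x$ follows because if $\vo{v}_x=\vo{0}_N$ then $\vo{v}_y=\vo{0}_N$ as well, contradicting the assumption that $\vo{v}$ is an eigenvector.

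For the forward direction, given $\mu\in\sigma(A)$ with left eigenvector $\vo{v}_x$ and $\lambda$ satisfying~\eqref{eq:rquad}, I would simply define $\vo{v}_y=-\tfrac{1}{\varepsilon+\lambda}\vo{v}_x$ (again $\lambda\neq -\varepsilon$ by the same computation as above) and verify by direct substitution that the two component equations above are satisfied; the second is immediate, and the first reduces, after using $\vo{v}_x^t A=\mu\vo{v}_x^t$, to the scalar identity $(\alpha-1)+\beta\mu-\tfrac{\varepsilon}{\varepsilon+\lambda}=\lambda$, which is equivalent to~\eqref{eq:rquad_eq} and hence to~\eqref{eq:rquad}.

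There is no real obstacle here: the argument is an essentially mechanical transposition of the proof of Lemma~\ref{lem:rquad}, with the only subtlety being the sign change in the coefficient relating $\vo{v}_y$ to $\vo{v}_x$ (compared to the $+\tfrac{\varepsilon}{\varepsilon+\lambda}$ relation for right eigenvectors). This sign change traces back to the asymmetry between the off-diagonal blocks $-I_N$ (upper right) and $\varepsilon I_N$ (lower left) of $J_0$: acting from the right these blocks feed $\vo{w}_x$ to $\vo{w}_y$ through $\varepsilon$, whereas acting from the left the second row of $J_0$ feeds $\vo{v}_x$ to $\vo{v}_y$ through $-1$. The only care needed is to track this sign correctly and to justify once that $\lambda+\varepsilon\neq 0$ on $\sigma(J_0)$, so that the division is legitimate.
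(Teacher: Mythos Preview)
Your proposal is correct and follows essentially the same route as the paper's proof: both expand the block equation $\vo{v}^t J_0=\lambda\vo{v}^t$ into the pair of row-vector identities, exclude $\lambda=-\varepsilon$ (the paper does this by the same substitution into~\eqref{eq:rquad}, already recorded at the start of the proof of Lemma~\ref{lem:rquad}), solve the second identity for $\vo{v}_y$, and reduce the first to $\vo{v}_x^tA=\mu\vo{v}_x^t$ with $\mu$ as in~\eqref{eq:rquad_eq}. The only cosmetic difference is ordering: the paper verifies the forward direction first by a direct block computation of $\vo{v}^t J_0$, whereas you handle the converse first.
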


Given $\mu=u+iv\in\sigma(A)$, we denote the two $J_0$-eigenvalues associated to $\mu$ guaranteed by Lemma \ref{lem:rquad} as $\nmax{\mu}(\alpha,\beta,\varepsilon)=\fmax{\mu}(\alpha,\beta,\varepsilon)+i\pmax{\mu}(\alpha,\beta,\varepsilon)$
and $\nmin{\mu}(\alpha,\beta,\varepsilon)=\fmin{\mu}(\alpha,\beta,\varepsilon)-i\pmin{\mu}(\alpha,\beta,\varepsilon)$, where the exact expressions for real functions $\fmax{\mu}$, $\pmax{\mu}$, $\fmin{\mu}$, and $\pmin{\mu}$ are obtained through the formulae for the principal roots of complex numbers (see Eq.~\eqref{eq:lead2} of Appendix~\ref{app:results} in the online version~\cite{juarezalvarez2024analysis}). Per our definition, the condition $\fmin{\mu}\leqslant\fmax{\mu}$ is verified.

\begin{mydef}[\textbf{Associated eigenvalues}]\label{def:assoc}
    Let $\mu\in\sigma(A)$ be an $A$-eigenvalue. Then for any $\alpha\in\mathds{R}$, $\beta\in\mathds{R}$, and $\varepsilon\geqslant0$, the two $J_0$-eigenvalues $\nmax{\mu}(\alpha,\beta,\varepsilon)$, $\nmin{\mu}(\alpha,\beta,\varepsilon)$ are called the $J_0$-\emph{eigenvalues associated} to $\mu$.
\end{mydef}

To simplify the notation, in the sequel we drop the dependence of functions $\nu^{\pm}_{\mu}$ on parameters $\alpha$, $\beta$, and $\varepsilon$. We use $\nu_{j}^\pm$ as a shorthand for $\nu_{\mu_j}^\pm$. Recall that the elements $\mu_1,\ldots,\mu_N$ of $\sigma(A)$ are ordered decreasingly by their real parts (see Section \ref{sec:math}). The following lemma provides conjugation relationships between $\nMIN_j$ and $\nMAX_j$. 

\begin{restatable}{mylem}{conjfirst}
\label{lem:conj}
    Let $\mu_j\in\sigma(A)$, for $j\in\{1,\ldots,N\}$, and $\nmin{j}$, $\nmax{j}$ denote its associated $J_0$-eigenvalues as in Definition \ref{def:assoc}. Then the following hold for small enough values of $\varepsilon>0$.\\ 
    {\it a)} If $\mu_j\in\mathds{R}$ and $\{\nmin{j},\nmax{j}\}\subset\mathds{R}$, then $\nmin{j}<\nmax{j}$.\\
    {\it b)} If $\mu_j\in\mathds{R}$ and $\{\nmin{j},\nmax{j}\}\subset\mathds{C}\backslash\mathds{R}$, then $\nmin{j}=\cjnmax{j}$.\\
    {\it c)} If $\mu_j\in\mathds{C}\backslash\mathds{R}$, $j<N$, then $\nmax{j+1}=\cj{\nu}_j^+$ and $\nmin{j+1}=\cjnmin{j}.$
\end{restatable}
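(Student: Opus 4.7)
The plan is to exploit the two available descriptions of the associated eigenvalues $\nu^\pm_j$: Lemma~\ref{lem:rquad} characterizes them as the roots of a quadratic in $\lambda$ whose coefficients depend on $\mu_j$, while the explicit formulas~\eqref{eq:lead2} express them in terms of $u=\mathrm{Re}(\mu_j)$ and $v=\mathrm{Im}(\mu_j)$. I would handle cases (a) and (b) via the quadratic characterization, and case (c) via the explicit formulas.

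For (a) and (b), since $\mu_j\in\mathds{R}$ the quadratic has real coefficients, so its two roots are either both real (when the discriminant is non-negative) or form a complex conjugate pair (when the discriminant is negative). This immediately yields (b): $\nu^-_j=\overline{\nu^+_j}$. For (a), the labeling convention $\Phi^-_j\le \Phi^+_j$ gives $\nu^-_j\le \nu^+_j$, and strictness must still be verified. I would compute the discriminant as $\Delta=(1-s)^2+\varepsilon(\varepsilon+2s-6)$ with $s:=\alpha+\beta\mu_j$. If $s=1$ then $\Delta=\varepsilon(\varepsilon-4)<0$ for $0<\varepsilon<4$, contradicting the hypothesis of (a); hence $s\neq 1$, and since $\Delta(\varepsilon)\to (1-s)^2>0$ as $\varepsilon\to 0$, we obtain $\Delta>0$ for all sufficiently small $\varepsilon$, giving two distinct real roots and thus strict inequality.

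For (c), the ordering convention of Section~\ref{sec:math} applied to the conjugate pair containing $\mu_j$ gives $\mu_{j+1}=\overline{\mu_j}$. Writing $\mu_j=u+iv$, I would then read off from~\eqref{eq:lead2} that $\Phi^\pm_\mu$ depends on $\mu$ only through $u$ and $v^2$, while both summands of $\Psi^\pm_\mu$ — the linear term $\beta v/2$ and the term proportional to $d=\mathrm{sgn}(\beta v(\alpha+\beta u-1+\varepsilon))$ — are odd in $v$. Consequently $\Phi^\pm_{\overline{\mu_j}}=\Phi^\pm_{\mu_j}$ and $\Psi^\pm_{\overline{\mu_j}}=-\Psi^\pm_{\mu_j}$. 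Substituting into the definitions $\nu^+_\mu=\Phi^+_\mu+i\Psi^+_\mu$ and $\nu^-_\mu=\Phi^-_\mu-i\Psi^-_\mu$ yields $\nu^+_{j+1}=\Phi^+_{\mu_j}-i\Psi^+_{\mu_j}=\overline{\nu^+_j}$ and $\nu^-_{j+1}=\Phi^-_{\mu_j}+i\Psi^-_{\mu_j}=\overline{\nu^-_j}$, which is exactly the claim.

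The main obstacle is the strictness in (a), where one must rule out the degenerate double-root case $\Delta=0$: this is precisely where the ``small enough $\varepsilon>0$'' hypothesis is invoked, since $\Delta(\varepsilon)$ can vanish for isolated values of $\varepsilon$ but is strictly positive on an interval of $\varepsilon$ near $0$ as soon as its limit $(1-s)^2$ is strictly positive, which the hypothesis of (a) forces. The other statements reduce to elementary algebra once the correct quadratic or the symmetries of~\eqref{eq:lead2} are in hand.
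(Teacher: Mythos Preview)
Your proposal is correct and follows essentially the same approach as the paper: parts (a) and (b) are handled through the real-coefficient quadratic of Lemma~\ref{lem:rquad} and its discriminant, and part (c) through the parity in $v$ of the explicit formulas~\eqref{eq:lead2}. The only cosmetic differences are that the paper phrases the $s=1$ subcase of (a) as ``discriminant non-zero'' rather than as a contradiction with the hypothesis, and in (c) the paper first conjugates the quadratic relation before checking the $\Phi^\pm/\Psi^\pm$ symmetries, whereas you go directly to the formulas; neither affects the substance of the argument.
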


\section{Conditions for purely imaginary $J_0$ leading eigenvalues and resulting dominant dynamics}
\label{sec:dom}

In this section we use Assumption~\ref{as:main leading} to characterize the leading eigenstructure of $J_0$, provided knowledge of the leading eigenstructure of $A$. In particular, we obtain conditions on $\alpha$, $\beta$, $\varepsilon$ under which the $J_0$-eigenvalues $\nu_{1}^\pm$ associated to a strictly leading real $A$-eigenvalue $\mu_1$ or the $J_0$-eigenvalues $\nu_{1}^+,\nu_2^+$ associated to a strictly leading complex conjugate $A$-eigenvalues $\mu_1$, $\mu_2=\cj{\mu}_1$ have zero real part, while all the other $J_0$-eigenvalues have negative real part. In other words, we show how the leading eigenstructure of $A$ and the resulting fast dominant dynamics (Section~\ref{sec: fast dom}) map to the leading eigenstructure of $J_0$ and to a (linearized) 2-dominant dynamics for the full fast-slow dynamics~\eqref{eq:ei}. Finally, we characterize how the parameter variations affect the leading eigenvalues of $J_0$, which will be instrumental for bifurcation analysis. Proofs of the results in this section are provided in the online version \cite{juarezalvarez2024analysis}.

\begin{restatable}{mylem}{rexistfirst}
\label{lem:rexist}
    Let the strictly leading $A$-eigenvalue $\mu_1$ be real. Let also
    \begin{equation}\label{eq:acrit}
        \acrit{\beta,1}(\varepsilon)=1+\varepsilon-\beta\mu_1.
    \end{equation}
    Then for small enough $\varepsilon\geqslant0$, $\beta\in(0,\tfrac{1}{\mu_1})$, and $\alpha=\acrit{\beta,1}(\varepsilon)$, the associated $J_0$-eigenvalues $\nmin{1}$ and $\nmax{1}$ are given by
    \begin{equation}\label{eq:rmodul}
     \nu^{\pm}_{1}=\pm i\sqrt{\varepsilon(1-\varepsilon)}.
    \end{equation}
    Thus, $\lim_{\varepsilon\to0}\abs{\nu_1^{\pm}}=0$.
    Moreover, $\tfrac{\partial\Phi^{\pm}_{1}}{\partial\alpha}(\acrit{\beta,1}(\varepsilon),\beta,\varepsilon)>0$.
\end{restatable}

\begin{restatable}{mylem}{rexistbetafirst}
\label{lem:rexist_beta}
    Let the strictly leading $A$-eigenvalue $\mu_1$ be real. Let also
    \begin{equation}\label{eq:bcrit}
    \bcrit{\alpha,1}(\varepsilon)=\dfrac{1+\varepsilon-\alpha}{\mu_1},
    \end{equation}
    Then for small enough values of $\varepsilon\geqslant0$, $\alpha\in(0,1)$, and $\beta=\bcrit{\alpha,1}(\varepsilon)$, the associated $J_0$-eigenvalues $\nmin{1}$ and $\nmax{1}$ are given by~\eqref{eq:rmodul}. Moreover, $\tfrac{\partial\Phi^{\pm}_{1}}{\partial\beta}(\alpha,\bcrit{\alpha,1}(\varepsilon),\varepsilon)>0$.
\end{restatable}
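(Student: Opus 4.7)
The statement is essentially the $\beta$-analogue of Lemma~\ref{lem:rexist}, so I would mimic that proof with $\beta$ replacing $\alpha$ as the varied parameter. The key starting point is that the strictly leading $A$-eigenvalue $\mu_1$ is real, so by Lemma~\ref{lem:rquad} the associated $J_0$-eigenvalues $\nu_1^\pm$ are the two roots of the quadratic~\eqref{eq:rquad} evaluated at $\mu = \mu_1$. Substituting $\beta = \bcrit{\alpha,1}(\varepsilon) = (1+\varepsilon-\alpha)/\mu_1$ makes the product $\beta\mu_1$ equal to $1+\varepsilon-\alpha$, so the linear coefficient $1+\varepsilon-\alpha-\beta\mu_1$ vanishes identically and the constant coefficient $\varepsilon(2-\alpha-\beta\mu_1)$ collapses to $\varepsilon(1-\varepsilon)$. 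The quadratic reduces to $\lambda^2 + \varepsilon(1-\varepsilon) = 0$, whose roots for $\varepsilon \in [0,1)$ are $\pm i\sqrt{\varepsilon(1-\varepsilon)}$; by the ordering convention from Section~\ref{sec:math} these are exactly $\nu_1^\pm$, yielding~\eqref{eq:rmodul}. The hypotheses $\mu_1 > 0$ and $\alpha \in (0,1)$ ensure $\beta = \bcrit{\alpha,1}(\varepsilon) > 0$, consistent with the model's standing assumptions on $\beta$.

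For the transversality claim, I would invoke the implicit function theorem to regard $\lambda$ as a smooth function of $\beta$ near the critical value (valid because the roots $\pm i\sqrt{\varepsilon(1-\varepsilon)}$ are simple for $\varepsilon \in (0,1)$), then implicitly differentiate \eqref{eq:rquad} with respect to $\beta$. This gives
\begin{equation*}
\frac{\partial \lambda}{\partial \beta} \;=\; \frac{\mu_1(\lambda + \varepsilon)}{2\lambda + (1+\varepsilon-\alpha-\beta\mu_1)}.
\end{equation*}
At $\beta = \bcrit{\alpha,1}(\varepsilon)$ the denominator reduces to $2\lambda = \pm 2i\sqrt{\varepsilon(1-\varepsilon)}$, and a short computation shows that the real part of the right-hand side equals $\mu_1/2$, so $\partial\Phi_1^\pm/\partial\beta = \mu_1/2 > 0$ as required.

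No step presents a genuine obstacle: the argument is a line-by-line parallel of Lemma~\ref{lem:rexist}, with the only substantive difference being that differentiating the quadratic coefficients with respect to $\beta$ produces a factor $-\mu_1$ where differentiating with respect to $\alpha$ produced $-1$. The only bookkeeping point is to confirm that the $\pm$ labels of the two purely imaginary roots match the ordering convention that defines $\nu_1^\pm$, which is immediate since both roots arise from the same real $A$-eigenvalue and the convention sorts non-real eigenvalues by decreasing imaginary part.
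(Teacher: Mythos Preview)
Your proposal is correct and follows essentially the same approach as the paper: the paper's proof literally says ``Proceed as in Lemma~\ref{lem:rexist}, and check that $\tfrac{\partial\Phi^+_{1}}{\partial\beta}(\alpha,\bcrit{\alpha,1}(\varepsilon),\varepsilon)=\tfrac{\mu_1}{2}\neq 0$,'' which is exactly the computation you carry out. The only cosmetic difference is that the paper (via Lemma~\ref{lem:rexist}) differentiates the explicit formula $\Phi_1^+=\tfrac12(\alpha+\beta\mu_1-1-\varepsilon)$ valid while the discriminant is negative, whereas you implicitly differentiate the quadratic~\eqref{eq:rquad}; both routes yield the same value $\mu_1/2>0$.
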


\begin{restatable}{mylem}{cexistfirst}
\label{lem:cexist}
    Let the strictly leading $A$-eigenvalue $\mu_1=u+iv$ be non-real with positive real part. Then for small enough values of $\varepsilon\geqslant0$ and $\beta\in(0,\tfrac{1}{\mathrm{Re}(\mu_1)})$ there exists a differentiable function $\acrit{\beta,1}(\varepsilon)$, satisfying $\acrit{\beta,1}(0)=1-\beta u$ such that, for $\alpha=\acrit{\beta,1}(\varepsilon)$, the associated $J_0$-eigenvalues $\nmax{1}$ and $\nmax{2}$ are
    given by
    \begin{equation}\label{eq:cmodul}
    \nu_{1,2}^+=\pm i\left(\dfrac{\beta v}{2}+\dfrac{1}{2}\sqrt{\beta^2v^2+4\varepsilon(2-\alpha-\beta u)}\right),
\end{equation}
    and, in particular, $\lim_{\varepsilon\to0}\abs{\nu_{1,2}^+}=\beta \mathrm{Im}(\mu_1)$. Furthermore,
    \begin{equation}\label{eq:order_bound}
        1-\beta \mathrm{Re}(\mu_1)-\varepsilon<\acrit{\beta,1}(\varepsilon)<1-\beta \mathrm{Re}(\mu_1)+\varepsilon,
    \end{equation}
    \begin{equation}\label{eq:order}
        \acrit{\beta,1}(\varepsilon)=1-\beta\mathrm{Re}(\mu_1)+O(\varepsilon^2),
    \end{equation}
    and $\tfrac{\partial\fmax{1,2}}{\partial\alpha}(\acrit{\beta,1}(\varepsilon),\beta,\varepsilon)>0$.
\end{restatable}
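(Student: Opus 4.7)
The plan is to characterize pure-imaginary roots of the characteristic quadratic from Lemma~\ref{lem:rquad} and then apply the implicit function theorem. With $\mu_1 = u + iv$, the quadratic reads $\lambda^2 + (1+\varepsilon-\alpha-\beta\mu_1)\lambda + \varepsilon(2-\alpha-\beta\mu_1) = 0$. Substituting the ansatz $\lambda = i\psi$, $\psi\in\mathds{R}$, and separating real and imaginary parts yields the coupled system
\[
-\psi^2 + \beta v\psi + \varepsilon(2-\alpha-\beta u) = 0, \qquad \psi(1+\varepsilon-\alpha-\beta u) - \varepsilon\beta v = 0.
\]
Eliminating $\psi$ between the two equations (e.g.\ solving the second for $\psi$ and plugging into the first, then multiplying by $(1+\varepsilon-\alpha-\beta u)^2/\varepsilon$) produces the single scalar constraint
\[
G(\alpha,\varepsilon) := \beta^2 v^2(1-\alpha-\beta u) + (2-\alpha-\beta u)(1+\varepsilon-\alpha-\beta u)^2 = 0.
\]

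Next I would verify $G(1-\beta u,0) = 0$ and $\partial_\alpha G(1-\beta u,0) = -\beta^2 v^2 \neq 0$ (which is nonzero since $v \neq 0$). The implicit function theorem then provides a differentiable function $\acrit{\beta,1}(\varepsilon)$ near $\varepsilon = 0$ with $\acrit{\beta,1}(0) = 1 - \beta u$ such that $G(\acrit{\beta,1}(\varepsilon),\varepsilon) \equiv 0$. Among the two roots of the quadratic-in-$\psi$ equation, the one compatible with the linear-in-$\psi$ equation is $\psi = \varepsilon\beta v / (1+\varepsilon-\acrit{\beta,1}(\varepsilon)-\beta u)$, which for small $\varepsilon > 0$ is close to $\beta v$ and hence selects the ``$+$'' root of the quadratic in $\psi$, yielding exactly formula \eqref{eq:cmodul} for $\nmax{1}$. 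The expression for $\nmax{2}$ then follows from Lemma~\ref{lem:conj}(c), giving $\nmax{2} = \cj{\nmax{1}}$, i.e., the ``$-$'' sign in \eqref{eq:cmodul}. The limit $\lim_{\varepsilon\to 0}\abs{\nu_{1,2}^+} = \beta\mathrm{Im}(\mu_1)$ is immediate from the formula.

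For the expansion \eqref{eq:order}, a direct computation gives $\partial_\varepsilon G(1-\beta u,0) = 2(2-\alpha-\beta u)(1+\varepsilon-\alpha-\beta u)\big|_{(1-\beta u,0)} = 0$, so implicit differentiation forces $\acrit{\beta,1}'(0) = 0$, hence $\acrit{\beta,1}(\varepsilon) = 1 - \beta u + O(\varepsilon^2)$. The strict bound \eqref{eq:order_bound} then follows for sufficiently small $\varepsilon > 0$ since $O(\varepsilon^2) < \varepsilon$. For the monotonicity claim, implicit differentiation of the characteristic quadratic in $\alpha$ yields
\[
\partial_\alpha \lambda = \dfrac{\lambda + \varepsilon}{2\lambda + 1 + \varepsilon - \alpha - \beta\mu_1}.
\]
Evaluating at $\alpha = \acrit{\beta,1}(\varepsilon)$ and $\lambda = i\psi$, and using $1+\varepsilon-\acrit{\beta,1}(\varepsilon)-\beta u = \varepsilon + O(\varepsilon^2)$ together with $\psi = \beta v + O(\varepsilon)$, both numerator and denominator reduce to $\varepsilon + i\beta v + O(\varepsilon)$, so $\partial_\alpha \lambda \to 1$ as $\varepsilon \to 0^+$. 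Consequently $\partial_\alpha \fmax{1} > 0$ for small enough $\varepsilon$, and $\partial_\alpha \fmax{2} > 0$ follows by conjugation.

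The hard part will be the monotonicity step: both numerator and denominator of $\partial_\alpha \lambda$ have real parts that vanish at $\varepsilon = 0$, making the ratio a $0/0$ limit whose positivity rests on the $O(\varepsilon^2)$ sharpness of the expansion \eqref{eq:order}. It is precisely this sharpness that forces $1+\varepsilon-\acrit{\beta,1}(\varepsilon)-\beta u = \varepsilon + O(\varepsilon^2)$, so that the leading-$\varepsilon$ contributions in numerator and denominator match and the limit is $+1$ rather than some other (possibly negative) real number.
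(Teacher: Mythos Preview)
Your approach is correct and, in fact, cleaner than the paper's. Both arrive at the \emph{same} scalar constraint (your $G(\alpha,\varepsilon)=0$ coincides with the paper's cubic \eqref{eq:implicit_alpha_cub} after a trivial sign rearrangement), and both then apply the implicit function theorem at $(1-\beta u,0)$ with the same nonvanishing derivative $-\beta^2v^2$. The difference is how the constraint is reached: the paper works from the explicit radical formulae \eqref{eq:lead2}, sets $\Phi^+_1=0$, and squares---which forces it to track spurious roots and then verify the sign of $\alpha+\beta u-1-\varepsilon$ a posteriori. Your ansatz $\lambda=i\psi$ substituted into the characteristic quadratic is more direct and avoids the squaring detour entirely. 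Likewise, for the transversality condition $\partial_\alpha\Phi^+_1>0$ the paper differentiates the explicit radical formula (producing a page of algebra), whereas your implicit differentiation $\partial_\alpha\lambda=(\lambda+\varepsilon)/(2\lambda+1+\varepsilon-\alpha-\beta\mu_1)$ gives the result in one line.

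Two minor points. First, your argument has a slight ordering issue: you use $1+\varepsilon-\acrit{\beta,1}(\varepsilon)-\beta u\approx\varepsilon$ to select the ``$+$'' root \emph{before} establishing the $O(\varepsilon^2)$ expansion; this is easily repaired by swapping those two paragraphs. More substantively, you should check that the purely imaginary root you find is indeed $\nu_1^+$ rather than $\nu_1^-$: this follows since the \emph{other} root has real part $\acrit{\beta,1}(\varepsilon)+\beta u-1-\varepsilon<0$ by \eqref{eq:order_bound}, so yours is the one with larger real part. Second, your ``hard part'' worry is a false alarm: at $\varepsilon=0$ both numerator and denominator of $\partial_\alpha\lambda$ equal $i\beta v\neq 0$, so the ratio is perfectly continuous there with value $1$---there is no $0/0$ limit to resolve, and the $O(\varepsilon^2)$ sharpness, while true, is not what rescues the argument.
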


\begin{restatable}{mylem}{cexistbetafirst}
\label{lem:cexist_beta}
    Let the strictly leading $A$-eigenvalue $\mu_1$ be non-real with positive real part. Then for small enough values of $\varepsilon\geqslant0$ and $\alpha\in(0,1)$ there exists a differentiable function $\bcrit{\alpha,1}(\varepsilon)$ satisfying $\bcrit{\alpha,1}(0)=\tfrac{1-\alpha}{\mathrm{Re}(\mu_1)}$, such that, for $\beta=\bcrit{\alpha,1}(\varepsilon)$, $\nmax{1}$ and $\nmax{2}$ are given by Equation \eqref{eq:cmodul}. Furthermore
    \begin{equation}\label{eq:order_bound_beta}
      1-\alpha-\varepsilon<\mathrm{Re}(\mu_1)\bcrit{\alpha,1}(\varepsilon)<1-\alpha+\varepsilon,  
    \end{equation}
    and $\tfrac{\partial\fmax{1,2}}{\partial\beta}(\alpha,\bcrit{\alpha,1}(\varepsilon),\varepsilon)>0$.
\end{restatable}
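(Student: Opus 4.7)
The plan is to mirror the proof of Lemma~\ref{lem:cexist} with the roles of $\alpha$ and $\beta$ exchanged: the implicit function theorem (IFT) will locate $\bcrit{\alpha,1}(\varepsilon)$, and the quantitative information will be extracted directly from the characteristic equation~\eqref{eq:rquad}. Write $\mu_1 = u+iv$ with $u,v>0$ and seek a purely imaginary eigenvalue $\lambda=i\omega$ with $\omega>0$; separating real and imaginary parts in~\eqref{eq:rquad} yields two scalar identities. The first identity is automatically verified by the $\omega$ appearing in~\eqref{eq:cmodul}, while the second defines a smooth function $F(\beta,\varepsilon) := \omega(\alpha,\beta,\varepsilon)\bigl[1+\varepsilon-\alpha-\beta u\bigr] - \varepsilon\beta v$ whose zero locus identifies the critical values of $\beta$.

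For existence, I would verify $F\bigl(\tfrac{1-\alpha}{u},0\bigr)=0$ (immediate, since $\omega(\alpha,\beta,0)=\beta v$ and the bracketed factor also vanishes at this base point) and compute $\partial_\beta F$ there. Because the bracketed factor vanishes, only the $-\omega u$ contribution survives, giving $\partial_\beta F = -(1-\alpha)v \neq 0$. The IFT then yields the differentiable branch $\bcrit{\alpha,1}(\varepsilon)$ with the claimed base value $\bcrit{\alpha,1}(0) = (1-\alpha)/\mathrm{Re}(\mu_1)$, and formula~\eqref{eq:cmodul} for $\nmax{1}, \nmax{2}$ is immediate by construction.

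For the bound~\eqref{eq:order_bound_beta}, I would test the sign of $F$ at the endpoints $\beta_{\pm} := (1-\alpha\pm\varepsilon)/u$. At $\beta_+$ the bracketed factor vanishes and $F(\beta_+,\varepsilon) = -\varepsilon\beta_+ v < 0$; at $\beta_-$ the bracketed factor equals $2\varepsilon > 0$ and positivity of the discriminant in~\eqref{eq:cmodul} forces $\omega > \beta_- v$, hence $F(\beta_-,\varepsilon) = \varepsilon(2\omega - \beta_- v) > 0$. The intermediate value theorem combined with the local uniqueness provided by the IFT then sandwiches $\bcrit{\alpha,1}(\varepsilon)$ strictly between $\beta_-$ and $\beta_+$. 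Finally, the transversality $\partial_\beta \fmax{1,2} > 0$ follows by implicit differentiation of~\eqref{eq:rquad}, which gives $\lambda' = \mu_1(\lambda+\varepsilon)/(2\lambda + 1 + \varepsilon - \alpha - \beta\mu_1)$; substituting $\lambda = i\omega$ at the critical point and letting $\varepsilon \to 0$ reduces the real part to $\mathrm{Re}(\mu_1) > 0$, and continuity extends positivity to all $\varepsilon$ sufficiently small.

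The main obstacle I expect is bookkeeping rather than anything conceptual: unlike the real-leading case handled in Lemma~\ref{lem:rexist_beta}, no closed-form quadratic in $\beta$ is available when $\mu_1$ is non-real, so each quantitative claim must be extracted from sign manipulations on~\eqref{eq:rquad} under the constraint $F=0$, with some care needed to ensure that the neighbourhood on which the IFT supplies uniqueness still contains the interval $(\beta_-,\beta_+)$ as $\varepsilon \downarrow 0$.
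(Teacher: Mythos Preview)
Your argument is correct and arrives at the same conclusions, but the route is genuinely different from the paper's. The paper mirrors its proof of Lemma~\ref{lem:cexist} step by step: it works with the explicit radical formulae~\eqref{eq:lead2}, reduces the condition $\fmax{1}=0$ to the cubic polynomial in~\eqref{eq:implicit_alpha_cub} (now viewed as a cubic in $\beta$, with leading coefficient $-u(u^2+v^2)\neq 0$), applies the implicit function theorem to that polynomial, deduces the bounds~\eqref{eq:order_bound_beta} from the second-order vanishing $\bcrit{\alpha,1}'(0)=0$, and finally obtains the transversality by differentiating the radical expression for $\fmax{1}$ directly and simplifying via~\eqref{eq:implicit_alpha_og}--\eqref{eq:implicit_alpha_sqrt}. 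Your approach bypasses the radicals entirely: substituting $\lambda=i\omega$ into~\eqref{eq:rquad} and separating real and imaginary parts gives a clean scalar function $F$ to which the IFT applies, the bounds come from sign checks of $F$ at the endpoints $\beta_\pm$ rather than from a derivative computation, and the transversality follows from the compact implicit-differentiation formula $\lambda'=\mu_1(\lambda+\varepsilon)/(2\lambda+1+\varepsilon-\alpha-\beta\mu_1)$, which at $\varepsilon=0$ collapses to $\lambda'=\mu_1$. What your route buys is a much lighter computational load and a more transparent reason for the transversality; what the paper's route buys is that it reuses verbatim the algebraic identities already established for Lemma~\ref{lem:cexist}, so no new manipulations are needed. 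The residual concern you flag---that the IFT uniqueness neighbourhood must contain the shrinking interval $(\beta_-,\beta_+)$---is handled exactly as you say: the interval has width $2\varepsilon/u$ and collapses to the base point, so for $\varepsilon$ small enough it sits inside any fixed IFT neighbourhood.
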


We summarize these findings in the following definitions.

\begin{mydef}\label{def:acrit}
    For strictly leading $A$-eigenvalue $\mu_1$ with positive real part, and $\beta\in(0,\tfrac{1}{\mathrm{Re}(\mu_1)})$, the function $\acrit{\beta,1}$ defined in Lemmata \ref{lem:rexist} and \ref{lem:cexist} is called the $\alpha$-\emph{critical value function}.
\end{mydef}

\begin{mydef}\label{def:bcrit}
    For strictly leading $A$-eigenvalue $\mu_1$  with positive real part, and $\alpha\in(0,1)$, the function $\bcrit{\alpha,1}$ defined in Lemmata \ref{lem:rexist_beta} and \ref{lem:cexist_beta} is called the $\beta$-\emph{critical value function}.
\end{mydef}

\subsection{Dominant structure is preserved near critical values}

We now show that the fast dominant structure of matrix $A$ is inherited by model Jacobian $J_0$ when either model parameter is close enough to its corresponding critical value.

\begin{restatable}{mylem}{rdomfirst}
\label{lem:rdom}
Let the strictly leading $A$-eigenvalue $\mu_1$ be real, and $\acrit{\beta,1}$ be the $\alpha$-critical value function as in Definition \ref{def:acrit}. Then for small enough $\varepsilon>0$, $\beta\in(0,\tfrac{1}{\mathrm{Re}(\mu_1)})$, and $\alpha=\acrit{\beta,1}(\varepsilon)$, the associated $J_0$-eigenvalues $\nmin{j}$ and $\nmax{j}$ have negative real parts, for all $j\in\{2,\ldots,k\}$.
\end{restatable}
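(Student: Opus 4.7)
The plan is to apply Lemma \ref{lem:rquad} at the critical value $\alpha = \alpha_{\beta,1}(\varepsilon) = 1 + \varepsilon - \beta\mu_1$ from \eqref{eq:acrit} and analyze the resulting quadratic for each non-leading $A$-eigenvalue. Substituting this $\alpha$ into \eqref{eq:rquad} with $\mu = \mu_j$, and introducing the shorthand $\delta_j := \mu_1 - \mu_j$, one obtains that the two $J_0$-eigenvalues $\nu_j^\pm$ associated to $\mu_j$ are exactly the roots of
\begin{equation*}
\lambda^2 + \beta\delta_j\lambda + \varepsilon\bigl(1 - \varepsilon + \beta\delta_j\bigr) = 0.
\end{equation*}
Because $\mu_1$ is a strictly leading \emph{real} eigenvalue of $A$, for every $j \in \{2,\ldots,N\}$ we have $\mathrm{Re}(\delta_j) = \mu_1 - \mathrm{Re}(\mu_j) > 0$.

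The two roots depend continuously on $\varepsilon$, and at $\varepsilon = 0$ the quadratic factors as $\lambda(\lambda + \beta\delta_j) = 0$ with roots $0$ and $-\beta\delta_j$. I would then obtain a first-order expansion in $\varepsilon$, either directly from the quadratic formula or by Vieta's relations (sum of roots $= -\beta\delta_j$, product $= \varepsilon(1-\varepsilon+\beta\delta_j)$), yielding, up to labeling,
\begin{equation*}
\nu_j^- = -\beta\delta_j + O(\varepsilon), \qquad \nu_j^+ = -\varepsilon\,\frac{1 + \beta\delta_j}{\beta\delta_j} + O(\varepsilon^2).
\end{equation*}
The real part of the first root is $-\beta\,\mathrm{Re}(\delta_j) < 0$ by the strict leading assumption. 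For the second root, a direct computation gives
\begin{equation*}
\mathrm{Re}\!\left(\frac{1 + \beta\delta_j}{\beta\delta_j}\right) = 1 + \frac{\mathrm{Re}(\delta_j)}{\beta|\delta_j|^2} > 0,
\end{equation*}
so $\mathrm{Re}(\nu_j^+) < 0$ as well, provided $\varepsilon$ is small enough that the $O(\varepsilon^2)$ remainder does not overturn the sign of the $O(\varepsilon)$ term.

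Since the index set $\{2,\ldots,N\}$ is finite, taking the minimum over $j$ of the required smallness thresholds produces a single $\varepsilon_0 > 0$ such that $\mathrm{Re}(\nu_j^\pm) < 0$ for all $j \geq 2$ whenever $0 < \varepsilon < \varepsilon_0$ and $\alpha = \alpha_{\beta,1}(\varepsilon)$. The one subtlety to handle carefully is that $\delta_j$ may be complex (when $\mu_j \in \mathds{C}\setminus\mathds{R}$), so a direct Routh--Hurwitz argument on real coefficients does not apply. This is precisely why the proof must rest on the perturbative expansion around the explicit factorization $\lambda(\lambda+\beta\delta_j)=0$ at $\varepsilon=0$, together with the uniform strict positivity of $\mathrm{Re}(\delta_j)$ on the finite index set, which is the main ingredient inherited from Assumption~\ref{as:main leading}.
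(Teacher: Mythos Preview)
Your proof is correct and rests on the same underlying idea as the paper's: both roots of the $\mu_j$-quadratic are tracked perturbatively from their $\varepsilon=0$ values $\{0,\,-\beta\delta_j\}$, the nonzero root keeps negative real part by continuity, and the root emanating from $0$ is shown to have first-order coefficient in $\varepsilon$ with strictly negative real part. The difference is in execution. You work directly with the complex quadratic, extracting the leading coefficient $-\tfrac{1+\beta\delta_j}{\beta\delta_j}$ via Vieta and then taking its real part. The paper instead separates the quadratic \eqref{eq:rquad} into real and imaginary parts, sets up a two-dimensional zero-level-set problem $(F_j,G_j)(\alpha,\varepsilon,\varphi_j,\psi_j)=0$, and invokes the Implicit Function Theorem to compute $\tfrac{\partial\Phi_j}{\partial\varepsilon}$ at $(1-\beta\mu_1,0,0,0)$; the resulting expression
\[
\frac{\partial\Phi_j}{\partial\varepsilon}=-\frac{\beta(u_1-u_j)+\beta^2(u_1-u_j)^2+\beta^2v_j^2}{\beta^2(u_1-u_j)^2+\beta^2v_j^2}
\]
is exactly your $-\bigl(1+\tfrac{\mathrm{Re}(\delta_j)}{\beta|\delta_j|^2}\bigr)$ in different notation. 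Your route is shorter and avoids the IFT scaffolding; the paper's has the advantage of fitting into a uniform framework reused verbatim in the proof of Lemma~\ref{lem:cdom}, where the critical $\alpha$ is only implicitly defined and the same level-set machinery handles both the non-leading eigenvalues and $\nu_1^-$.
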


\begin{mylem}\label{lem:rdom_beta}
Let the strictly leading $A$-eigenvalue $\mu_1$ be real, and $\bcrit{\alpha,1}$ be the $\beta$-critical value function as in Definition \ref{def:bcrit}. Then for small enough $\varepsilon>0$, $\alpha\in(0,1)$, and $\beta=\bcrit{\alpha,1}(\varepsilon)$, the associated $J_0$-eigenvalues $\nmin{j}$ and $\nmax{j}$ have negative real parts, for all $j\in\{2,\ldots,k\}$.
\end{mylem}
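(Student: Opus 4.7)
The plan is to mirror the proof of Lemma~\ref{lem:rdom}, interchanging the roles of $\alpha$ and $\beta$. By Lemma~\ref{lem:rquad}, for each $j\in\{2,\ldots,N\}$ the associated eigenvalues $\nmin{j},\nmax{j}$ are precisely the two roots of
\[
\lambda^2 + B_j(\varepsilon)\lambda + C_j(\varepsilon) = 0,\quad B_j = 1+\varepsilon-\alpha-\beta\mu_j,\quad C_j = \varepsilon(2-\alpha-\beta\mu_j).
\]
Substituting $\beta = \bcrit{\alpha,1}(\varepsilon) = (1+\varepsilon-\alpha)/\mu_1$ yields the compact expressions $B_j(\varepsilon) = (1+\varepsilon-\alpha)(1-\mu_j/\mu_1)$ and $C_j(0) = 0$. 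Lemma~\ref{lem:conj} further reduces the problem: for a non-real conjugate pair of $A$-eigenvalues, the associated $J_0$-eigenvalues are themselves conjugates, so it suffices to verify the claim for one representative of each such pair.

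For real $\mu_j$, strict dominance gives $\mu_j<\mu_1$, and since $\alpha\in(0,1)$ and $\mu_1>0$, one checks directly in each sub-case ($\mu_j>0$, $\mu_j=0$, $\mu_j<0$) that $1-\mu_j/\mu_1>0$ and $2-\alpha-\beta(\varepsilon)\mu_j>0$ for small $\varepsilon>0$. Thus $B_j(\varepsilon)>0$ and $C_j(\varepsilon)>0$, and the Routh--Hurwitz criterion for real quadratic polynomials immediately places both roots $\nmin{j}$ and $\nmax{j}$ in the open left half-plane.

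For non-real $\mu_j=u+iv$ with $u<\mu_1$, the coefficients $B_j,C_j$ are complex. At $\varepsilon=0$ the two roots of the quadratic are $0$ and $-B_j(0)=-(1-\alpha)(1-\mu_j/\mu_1)$, the latter with real part $-(1-\alpha)(1-u/\mu_1)<0$. By continuous dependence of simple roots of a polynomial on its coefficients, one root remains in a small neighborhood of $-B_j(0)$ for all sufficiently small $\varepsilon>0$ and thus retains a strictly negative real part. For the other (``slow'') root, Vieta's formula $\lambda_1\lambda_2=C_j(\varepsilon)$ provides the leading-order expansion
\[
\lambda_{\mathrm{slow}}(\varepsilon) = -\varepsilon\,\frac{2-\alpha-(1-\alpha)\mu_j/\mu_1}{(1-\alpha)(1-\mu_j/\mu_1)} + O(\varepsilon^2).
\]
Setting $p=(1-\alpha)u/\mu_1$ and $q=(1-\alpha)v/\mu_1$, a direct calculation gives
\[
\mathrm{Re}\!\left(\frac{(2-\alpha)-p-iq}{(1-\alpha)-p-iq}\right) = \frac{[(2-\alpha)-p]\,[(1-\alpha)-p]+q^2}{[(1-\alpha)-p]^2+q^2}.
\]
Since $u<\mu_1$ and $\alpha<1$ imply $(1-\alpha)-p>0$, and in every sub-case of $u$ one also has $(2-\alpha)-p>0$, the numerator is strictly positive, so $\mathrm{Re}(\lambda_{\mathrm{slow}})<0$ for small $\varepsilon>0$. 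The main obstacle is precisely this sign-tracking in the non-real case, because one has no control on the sign of $u$ beyond the strict inequality $u<\mu_1$; the calculation above shows that this single inequality is in fact enough to ensure uniform positivity of the relevant real-part expression, and the lemma follows.
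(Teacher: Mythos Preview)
Your proof is correct. The paper does not actually supply a separate proof of Lemma~\ref{lem:rdom_beta}; it is stated without proof, with the understanding that one proceeds analogously to Lemma~\ref{lem:rdom}. That analogous proof uses the Implicit Function Theorem on the level-set equations $(F_j,G_j)$ to track the $\varepsilon$-derivative of the slow root's real part, together with a continuity argument for the fast root.

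Your route is genuinely different and more elementary. For real $\mu_j$ you bypass any perturbation analysis entirely by noting that the quadratic~\eqref{eq:rquad} has real coefficients and applying Routh--Hurwitz; this disposes of both roots at once. For non-real $\mu_j$ you replace the IFT computation by Vieta's formulas plus continuous dependence of simple roots, which is cleaner and yields the same leading-order information. The only place where some care is needed is the sign of the real part of $(2-\alpha-p-iq)/((1-\alpha)-p-iq)$, and your observation that $p<1-\alpha$ (from $u<\mu_1$, $\mu_1>0$, $\alpha<1$) forces both factors in the numerator to be positive is exactly what is required. The paper's IFT approach has the minor advantage of being uniform across the real and non-real $\mu_j$ cases, but your split into Routh--Hurwitz versus Vieta is shorter and avoids computing Jacobians of $(F_j,G_j)$.
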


\begin{restatable}{mylem}{cdomfirst}
\label{lem:cdom}
Let the strictly leading $A$-eigenvalue $\mu_1$ be non-real, and $\acrit{\beta,1}$ be the $\alpha$-critical value function as in Definition \ref{def:acrit}. Then for small enough $\varepsilon>0$, $\beta\in(0,\tfrac{1}{\mathrm{Re}(\mu_1)})$, and $\alpha=\acrit{\beta,1}(\varepsilon)$, the associated $J_0$-eigenvalues $\nmin{1}$, $\nmin{2}$, $\nmin{j}$ and $\nmax{j}$ have negative real parts, for all $j\in\{3,\ldots,k\}$.
\end{restatable}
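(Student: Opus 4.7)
The plan is to treat the leading pair $\nu_1^-,\nu_2^-$ separately from the non-leading $\nu_j^\pm$ for $j\in\{3,\ldots,N\}$, exploiting the quadratic~\eqref{eq:rquad} together with Vieta's formulas at each $\mu_j\in\sigma(A)$. The new difficulty compared to Lemma~\ref{lem:rdom} (real-leading case) is that Lemma~\ref{lem:conj}c pairs $\nu_1^+$ with $\nu_2^+$ (and $\nu_1^-$ with $\nu_2^-$) as conjugates, so the conjugate partner of the purely imaginary $\nu_1^+$ from Lemma~\ref{lem:cexist} is no longer $\nu_1^-$; this reshapes how sign information is extracted from Vieta's formulas and makes the non-real, non-leading $\mu_j$ subcase genuinely harder.

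First I would handle the leading pair. For $\mu=\mu_1$ in~\eqref{eq:rquad}, Vieta gives $\nu_1^++\nu_1^-=\alpha+\beta\mu_1-1-\varepsilon$. Taking real parts and substituting $\alpha=\acrit{\beta,1}(\varepsilon)=1-\beta\,\mathrm{Re}(\mu_1)+O(\varepsilon^2)$ from~\eqref{eq:order} yields $\mathrm{Re}(\nu_1^+)+\mathrm{Re}(\nu_1^-)=-\varepsilon+O(\varepsilon^2)$. Since $\mathrm{Re}(\nu_1^+)=0$ by Lemma~\ref{lem:cexist}, one obtains $\mathrm{Re}(\nu_1^-)=-\varepsilon+O(\varepsilon^2)<0$ for small $\varepsilon>0$. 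Lemma~\ref{lem:conj}c applied to the consecutive pair $(\mu_1,\mu_2=\bar{\mu}_1)$ then gives $\nu_2^-=\bar{\nu}_1^-$, hence $\mathrm{Re}(\nu_2^-)=\mathrm{Re}(\nu_1^-)<0$.

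Next I would address each $\mu_j$ with $j\geqslant 3$. Writing $u=\mathrm{Re}(\mu_1)$ and using again $\alpha=1-\beta u+O(\varepsilon^2)$, Vieta's formulas for~\eqref{eq:rquad} with $\mu=\mu_j$ read $\nu_j^++\nu_j^-=\beta(\mu_j-u)-\varepsilon+O(\varepsilon^2)$ and $\nu_j^+\nu_j^-=\varepsilon\bigl(1+\beta(u-\mu_j)\bigr)+O(\varepsilon^3)$. If $\mu_j$ is real, then $\nu_j^\pm$ are either both real or a complex conjugate pair; in either case the negative real part of the sum (since $\mathrm{Re}(\mu_j)<u$) together with the positive real part of the product (since $1+\beta(u-\mathrm{Re}(\mu_j))>1$) force both roots into the open left half-plane. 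If $\mu_j$ is non-real, conjugation between $\nu_j^+$ and $\nu_j^-$ is lost, and I would argue by perturbation from $\varepsilon=0$: at that value the quadratic factors as $\lambda\bigl(\lambda+\beta(u-\mu_j)\bigr)=0$ with roots $0$ and $\beta(\mu_j-u)$, the latter of negative real part; the ordering convention of~\eqref{eq:lead2} identifies $\nu_j^-(0)=\beta(\mu_j-u)$, so continuity preserves $\mathrm{Re}(\nu_j^-(\varepsilon))<0$ for small $\varepsilon$. For $\nu_j^+$, dividing Vieta's product by $\nu_j^-$ and expanding yields the first-order behaviour $\nu_j^+(\varepsilon)=\varepsilon\bigl(\tfrac{1}{\beta(\mu_j-u)}-1\bigr)+O(\varepsilon^2)$, whose real part $\varepsilon\bigl(\tfrac{\mathrm{Re}(\mu_j)-u}{\beta|\mu_j-u|^2}-1\bigr)$ is strictly negative.

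The hard part is precisely this last subcase: when $\mu_j$ is non-real and non-leading, $\nu_j^+$ and $\nu_j^-$ need not be conjugate, so the signs of the Vieta sum and product alone do not determine the signs of the individual real parts. A first-order perturbation analysis from the degenerate $\varepsilon=0$ configuration---where one associated $J_0$-eigenvalue sits at the origin and must be tracked as it moves off---is what certifies that the bifurcating root is pushed into the open left half-plane for $\varepsilon>0$.
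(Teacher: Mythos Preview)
Your argument is correct and takes a genuinely different route from the paper's. The paper handles every $\mu_j$ (including $\mu_1$) by rewriting the quadratic~\eqref{eq:rquad} as a zero level set of a map $(F_j,G_j):\mathds{R}^4\to\mathds{R}^2$ in the unknowns $(\alpha,\varepsilon,\varphi_j,\psi_j)$ (real and imaginary parts of the root) and then applies the Implicit Function Theorem to solve for $(\varphi_j,\psi_j)$ as functions of $(\alpha,\varepsilon)$; the sign of $\partial\Phi_j/\partial\varepsilon$ at $(1-\beta u_1,0)$ is computed explicitly from the inverse Jacobian and shown to be negative, first for $j\geqslant 3$ (by pointing back to the proof of Lemma~\ref{lem:rdom}, which already covers non-real $\mu_j$) and then separately for $j=1$, where the Jacobian determinant is $\beta^2v_1^2\neq0$ and $\partial\Phi_1/\partial\varepsilon(1-\beta u_1,0)=-1$.

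You instead read the root locations directly off Vieta's formulas for~\eqref{eq:rquad}, which is both shorter and more transparent. For the leading pair your observation that $\mathrm{Re}(\nmax{1})+\mathrm{Re}(\nmin{1})=\alpha+\beta u_1-1-\varepsilon=-\varepsilon+O(\varepsilon^2)$ combined with $\mathrm{Re}(\nmax{1})=0$ is an especially clean replacement for the paper's IFT computation. For non-real $\mu_j$ with $j\geqslant 3$ your perturbation from the $\varepsilon=0$ factorization $\lambda\bigl(\lambda-\beta(\mu_j-u)\bigr)=0$, followed by $\nmax{j}(\varepsilon)=\varepsilon\bigl((\beta(\mu_j-u))^{-1}-1\bigr)+O(\varepsilon^2)$, delivers exactly the same first-order information as the paper's $\partial\Phi_j/\partial\varepsilon$ calculation, without the IFT machinery. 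The paper's approach has the mild advantage of being uniform across all $j$ and of making the smooth dependence on $(\alpha,\varepsilon)$ explicit; yours has the advantage of needing only elementary algebra and of making the leading-order size $\mathrm{Re}(\nmin{1})\approx -\varepsilon$ immediately visible.
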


\begin{mylem}\label{lem:cdom_beta}
Let the strictly leading $A$-eigenvalue $\mu_1$ be non-real, and $\bcrit{\alpha,1}$ be the $\beta$-critical value function as in Definition \ref{def:bcrit}. Then for small enough $\varepsilon>0$, $\alpha\in(0,1)$, and $\beta=\bcrit{\alpha,1}(\varepsilon)$, the associated $J_0$-eigenvalues $\nmin{1}$, $\nmin{2}$, $\nmin{j}$ and $\nmax{j}$ have negative real parts, for all $j\in\{3,\ldots,k\}$.
\end{mylem}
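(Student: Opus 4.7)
The plan is to adapt the argument used for Lemma~\ref{lem:cdom} (the $\alpha$-parametric analog) to the $\beta$-parametric setting. The role of the slow-fast structure is to make the fast dominance structure guaranteed by Assumption~\ref{as:main leading} survive perturbation into the full Jacobian~$J_0$. By Lemma~\ref{lem:cexist_beta}, at $\beta=\bcrit{\alpha,1}(\varepsilon)$ the pair $\nmax{1},\nmax{2}$ is purely imaginary and equal to the conjugate pair~\eqref{eq:cmodul}; so it remains to certify that every other associated $J_0$-eigenvalue has strictly negative real part.

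First I would reduce the bookkeeping via Lemma~\ref{lem:conj}. Because $\mu_1$ is non-real and $\mu_2=\cj{\mu}_1$, part~(c) gives $\nmin{2}=\cjnmin{1}$, so it suffices to treat $\nmin{1}$. For the remaining eigenvalues $\mu_j$, $j\in\{3,\ldots,N\}$, parts~(a)--(c) pair them up into conjugate pairs of associated $J_0$-eigenvalues, so I need only check that $\mathrm{Re}(\nmax{j})<0$ and $\mathrm{Re}(\nmin{j})<0$ for each $j\geqslant 3$.

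Second, rather than wrestle with the closed-form expressions~\eqref{eq:lead2}, I would work directly with the quadratic~\eqref{eq:rquad}. Setting $\varepsilon=0$ and $\beta_0=\bcrit{\alpha,1}(0)=(1-\alpha)/u_1$, where $u_1=\mathrm{Re}(\mu_1)$, the quadratic factors as $\lambda\bigl(\lambda+1-\alpha-\beta_0\mu\bigr)=0$. Hence the two $J_0$-eigenvalues associated to $\mu$ at $\varepsilon=0$ are $0$ and $\alpha+\beta_0\mu-1$. For $\mu=\mu_{1,2}$, the nonzero root is $\pm i\beta_0 v$ (purely imaginary, matching~\eqref{eq:cmodul} at $\varepsilon=0$). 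For $j\geqslant 3$, the nonzero root is $\alpha+\beta_0\mu_j-1=\beta_0(\mu_j-u_1)$, whose real part $\beta_0(u_j-u_1)$ is strictly negative by the strictly-leading property in Assumption~\ref{as:main leading}. The "large" roots are therefore bounded away from the imaginary axis at $\varepsilon=0$, and by continuity remain in the open left half-plane for all sufficiently small $\varepsilon>0$ and $\beta$ close to $\beta_0$, in particular along the curve $\beta=\bcrit{\alpha,1}(\varepsilon)$ given by~\eqref{eq:order_bound_beta}.

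Third, I would handle the "small" root via Newton-type perturbation in~\eqref{eq:rquad}. Near $\lambda=0$ one obtains
\begin{equation*}
\nmin{j}(\alpha,\bcrit{\alpha,1}(\varepsilon),\varepsilon)=-\frac{\varepsilon(2-\alpha-\beta_0\mu_j)}{1-\alpha-\beta_0\mu_j}+O(\varepsilon^2).
\end{equation*}
Writing $a_j=\beta_0(u_1-u_j)$ and $b_j=-\beta_0 v_j$, a short computation gives
\begin{equation*}
\mathrm{Re}(\nmin{j})=-\varepsilon\,\frac{a_j+a_j^2+b_j^2}{a_j^2+b_j^2}+O(\varepsilon^2).
\end{equation*}
For $j\geqslant 3$, $a_j>0$, so the leading coefficient is strictly positive and the real part is negative. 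For $j\in\{1,2\}$, $a_j=0$ and $b_j=\mp\beta_0 v\neq 0$, so $\mathrm{Re}(\nmin{j})=-\varepsilon+O(\varepsilon^2)<0$. Finally, I must verify that replacing $\beta_0$ by $\bcrit{\alpha,1}(\varepsilon)=\beta_0+O(\varepsilon)$ only perturbs these leading-order real parts by $O(\varepsilon^2)$, using the bound~\eqref{eq:order_bound_beta}; this is where I expect the main technical obstacle, since one has to track the $\varepsilon$-perturbation of $\beta$ simultaneously with the $\varepsilon$-scaling of $\nmin{j}$ and guarantee that the signs of $O(\varepsilon)$ quantities are not spoiled. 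Once this uniform-in-$j$ bound is in place, the real parts of all non-leading associated $J_0$-eigenvalues are strictly negative for small enough $\varepsilon>0$, completing the proof.
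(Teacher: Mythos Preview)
Your proposal is correct and follows essentially the same route as the paper. The paper does not give an explicit proof of Lemma~\ref{lem:cdom_beta}; it is understood to follow from the proof of Lemma~\ref{lem:cdom} by replacing the $\alpha$-critical curve with the $\beta$-critical curve of Lemma~\ref{lem:cexist_beta}. Your argument does exactly this: separate the two roots of~\eqref{eq:rquad} at $\varepsilon=0$ into the ``large'' root $\alpha+\beta_0\mu_j-1$ (handled by continuity for $j\geqslant 3$) and the ``small'' root near $0$ (handled by a first-order expansion in $\varepsilon$), then check the sign of the $O(\varepsilon)$ real part.

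The only stylistic difference is that the paper's proof of Lemma~\ref{lem:cdom} packages the small-root analysis via the Implicit Function Theorem applied to the level-set functions $(F_j,G_j)$ in~\eqref{eq:levelset}, computing $\partial\Phi_j/\partial\varepsilon$ at $\varepsilon=0$, whereas you obtain the same leading-order term directly from the Newton expansion of~\eqref{eq:rquad}. The two computations are equivalent and yield identical leading coefficients (in particular your $\mathrm{Re}(\nmin{1})=-\varepsilon+O(\varepsilon^2)$ matches the paper's $\partial\Phi_1/\partial\varepsilon=-1$). Your concern about replacing $\beta_0$ by $\bcrit{\alpha,1}(\varepsilon)$ is not an obstacle: the proof of Lemma~\ref{lem:cexist_beta} in fact gives $\bcrit{\alpha,1}'(0)=0$, so $\bcrit{\alpha,1}(\varepsilon)=\beta_0+O(\varepsilon^2)$, and even the weaker $O(\varepsilon)$ bound from~\eqref{eq:order_bound_beta} suffices since the leading coefficient depends smoothly on $\beta$ near $\beta_0$ (the denominator $1-\alpha-\beta_0\mu_j$ is nonzero for every $j$). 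The uniformity-in-$j$ is automatic because $N$ is finite.
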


\section{Constructive rhythmic network control}\label{sec:synchro}

\subsection{Rhythmogenesis organized by a Hopf bifurcation}

From Lemmata~\ref{lem:rexist}, \ref{lem:cexist}, \ref{lem:rdom}, and~\ref{lem:cdom}, it follows that model~\eqref{eq:ei} undergoes a Hopf bifurcation through model parameter $\alpha$ at critical value $\alpha=\alpha_{\beta,1}(\varepsilon)$. We further characterize the dominant subspace $\mathcal H$ linearized dynamics at bifurcation, thus providing a description of the emerging rhythmic profile through the {\it Center Manifold Theorem}~\cite[Theorem 3.2.1]{guckenheimer83}. 

\begin{restatable}{mytheo}{rhythm}
\label{theo:rhythm}
    Consider model~\eqref{eq:ei}. Let Assumption~\ref{as:main leading} hold, $\beta\in(0,\tfrac{1}{\mathrm{Re}(\mu_1)})$, and $\acrit{\beta,1}$ be defined as Definition \ref{def:acrit}. Then, for $\varepsilon>0$ sufficiently small, the origin is exponentially stable for $\alpha<\alpha^*=\acrit{\beta,1}(\varepsilon)$, unstable for $\alpha>\alpha^*$, and a Hopf bifurcation occurs through $\alpha$ at $\alpha=\alpha^*$.
\end{restatable}

An analogous result for $\beta$ is achieved through Lemmata~\ref{lem:rexist_beta}, \ref{lem:cexist_beta}, \ref{lem:rdom_beta}, and~\ref{lem:cdom_beta}, at critical value $\beta=\bcrit{\alpha,1}(\varepsilon)$.

\begin{restatable}{mytheo}{rhythmbeta}
\label{theo:rhythm_beta}
    Consider model~\eqref{eq:ei}. Let Assumption~\ref{as:main leading} hold, $\alpha\in(0,1)$, and $\bcrit{\alpha,1}(\varepsilon)$ be defined as Definition \ref{def:bcrit}. Then, for $\varepsilon>0$ sufficiently small, the origin is exponentially stable for $\beta<\beta^*=\bcrit{\alpha,1}(\varepsilon)$, unstable for $\beta>\beta^*$, and a Hopf bifurcation occurs through $\beta$ at $\beta=\beta^*$.
\end{restatable}

The proofs for both results follow from the previous lemmata, and they are  presented in Appendix~\ref{app:theorems} of the online preprint~\cite{juarezalvarez2024analysis}. The negative real parts of the non-bifurcating eigenvalues and the 2-dominance of the linearized dynamics of model~\eqref{eq:ei} close to the Hopf bifurcation imply exponential stability of the 2-dimensional {\it center manifold}~\cite[Theorem 3.2.1]{guckenheimer83} of the bifurcation. Thus, the oscillatory behavior of the linearized dynamics inside the dominant subspace $\mathcal H$ characterizes the full nonlinear relative rhythmic profile emerging at the Hopf bifurcation (modulo errors of order $O\left((\alpha-\alpha_{\beta,1}(\varepsilon))^2\right)$ or $O\left((\beta-\beta_{\alpha,1}(\varepsilon))^2\right)$ for $\alpha$ or $\beta$ as bifurcation parameter). Observe that $\mathcal H$ is the generalized real eigenspace associated to the strictly leading complex eigenvalue pair. We state and prove the following proposition when $\alpha$ is the bifurcation parameter. The $\beta$ case is analogous.

\begin{myprop}\label{prop:synchro}
    Under the same assumptions as Theorem~\ref{theo:rhythm}, let $\alpha=\alpha_{\beta,1}(\varepsilon)$ and let $\vo{w}=(\vo{w}_x^t\vert\vo{w}_y^t)^t$, $A\vo{w}_x=\mu_1\vo{x}$, $\vo{w}_y=\tfrac{\varepsilon}{\varepsilon+\nu_1^+}\vo{w}_x$, be a right non-zero eigenvector of $J_0$ for the strictly leading purely complex $J_0$-eigenvalue $\nu_1^+$ associated to the strictly leading $A$-eigenvalue $\mu_1$. Write $\vo{w}_x=(\sigma_n e^{i\varphi_n})_{n=1}^{N}$ and suppose, without loss of generality, that $\sigma_1>0$ and $\sigma_1\geqslant \sigma_j$ for all $j\neq 1$. Then the solution $\vo{z}(t)=(\vo{x}^t(t)\vert\vo{y}^t(t))^t$ to the linear system $\dot{\vo{z}}=J_0\vo{z}$ with initial condition $\vo{z}(0)=c_1\mathrm{Re}(\vo{w})+c_2\mathrm{Im}(\vo{w})$ satisfies
    \begin{equation}\label{eq:synchro}
    \vo{x}_n(t)=\sigma_n(c_1\cos(\abs{\nmax{1}}t+\varphi_n)+c_2\sin(\abs{\nmax{1}}t+\varphi_n)),
    \end{equation}
    which corresponds to a relative rhythmic profile with relative amplitudes $\rho_j=\frac{\sigma_j}{\sigma_1}$ and relative phases $\theta_j=\varphi_j-\varphi_1$.
\end{myprop}

\begin{proof}
    Consider complex function $\boldsymbol{\zeta}(t)=e^{\nu_1^+t}\vo{w}$ as a solution to the IVP defined by $\dot{\vo{\zeta}}= J_0\vo{\zeta}$, $\vo{\zeta(0)}=\vo{w}$. By writing $\boldsymbol{\zeta}(t)=(\boldsymbol{\xi}^t(t)\vert\boldsymbol{\eta}^t(t))^t$, it is possible to find the analytic entry-wise solutions $\boldsymbol{\xi}_n(t)=\sigma_ne^{i(\abs{\nu_1^+}t+\theta_n)}$ for every $n\in\{1,\ldots,N\}$. Now write $\boldsymbol{\zeta}(t)=\mathrm{Re}(\boldsymbol{\zeta})(t)+i\mathrm{Im}(\boldsymbol{\zeta})(t)$. Then the solutions to real linear system $\dot{\vo{z}}=J_0\vo{z}$ are generated by $\mathrm{Re}(\boldsymbol{\zeta})(t)$ and $\mathrm{Im}(\boldsymbol{\zeta})(t)$. By hypothesis we have $\vo{z}(0)=c_1\mathrm{Re}(\vo{w})+c_2\mathrm{Im}(\vo{w})$, and therefore $\vo{z}(t)=c_1\mathrm{Re}(\boldsymbol{\zeta})(t)+c_2\mathrm{Im}(\boldsymbol{\zeta})(t)$ for every non-negative time. This in turn implies $\vo{x}(t)=c_1\mathrm{Re}(\boldsymbol{\xi})(t)+c_2\mathrm{Im}(\boldsymbol{\xi})(t)$, from whence~\eqref{eq:synchro} follows.
\end{proof}

\begin{restatable}{myrem}{osci_y}
    It is also easy to see, using $\vo{w}_y=\tfrac{\varepsilon}{\varepsilon+\nu_1^+}\vo{w}_x$, that the slow negative feedback variable $y_j$ oscillates with $\rho_{\varepsilon}$ times the amplitude of the oscillation of $x_j$, and with a phase difference relative to $x_j$ of $\theta_\varepsilon$, where $\rho_{\varepsilon}e^{i\theta_{\varepsilon}}=\tfrac{\varepsilon}{\varepsilon+\nu_1^+}$; in particular, $\rho_\varepsilon=O(\varepsilon)$.
\end{restatable}

By the Center Manifold Theorem~\cite[Theorem 3.2.1]{guckenheimer83}, the limit cycle emerging at the Hopf bifurcation lies on a two-dimensional manifold that is tangent to the dominant subspace $\mathcal H$ and is a small perturbation of one of the periodic solutions proved in Proposition~\ref{prop:synchro}. It follows that the leading eigenstructure of $A$ fully determines the relative rhythmic profile of the network rhythm emerging at the Hopf bifurcation.

\subsection{Stability of rhythmic profiles} \label{sec:crit}

In the following theorem we compute the coefficient $b$ in the normal form of the Hopf bifurcation (see Theorem~\ref{theo:hopf} in the online version~\cite{juarezalvarez2024analysis}), which determines the stability and the parametric region of existence of the limit cycle emerging at the bifurcation. The proof of this theorem is technical and is provided in the extended preprint.

\begin{restatable}{mytheo}{coeffgenfirst}
\label{theo:coeff_gen}
    Under the same assumptions as Theorem~\ref{theo:rhythm}, let $\alpha=\alpha_{\beta,1}(\varepsilon)$,  $\vo{v}=(\vo{v}_x^t\vert\vo{v}_y^t)^t$ be a left non-zero eigenvector of $J_0$ associated to purely complex $J_0$-eigenvalue $\nu_1^+$, and $\vo{w}=(\vo{w}_x^t\vert\vo{w}_y^t)^t$ be a right non-zero eigenvector of real matrix $J_0$ associated to purely complex eigenvalue $\cj{\nu}_1^+=-\nu_1^+$, such that $\vo{v}^t\vo{w}=0$ and $\cj{\vo{v}}^t\vo{w}=2$ (see Theorem~\ref{theo:datta}). Then coefficient $b$ in Theorem~\ref{theo:hopf} is given by
        \begin{align}\label{eq:hopf_coeff_b}
            b =&\tfrac{1}{16}\abs{1-\nmax{1}+\tfrac{\varepsilon}{\varepsilon-\nmax{1}}}^2S'''(0)\cdot\\
            &\cdot\mathrm{Re}\left(\left(1-\nmax{1}+\tfrac{\varepsilon}{\varepsilon-\nmax{1}}\right)\ip{\vo{v}_x}{\vo{w}_x\odot\vo{w}_x\odot\cj{\vo{w}}_x}\right).\nonumber
        \end{align}
\end{restatable}

\begin{figure}
    \centering
    \includegraphics[width=0.48\textwidth]{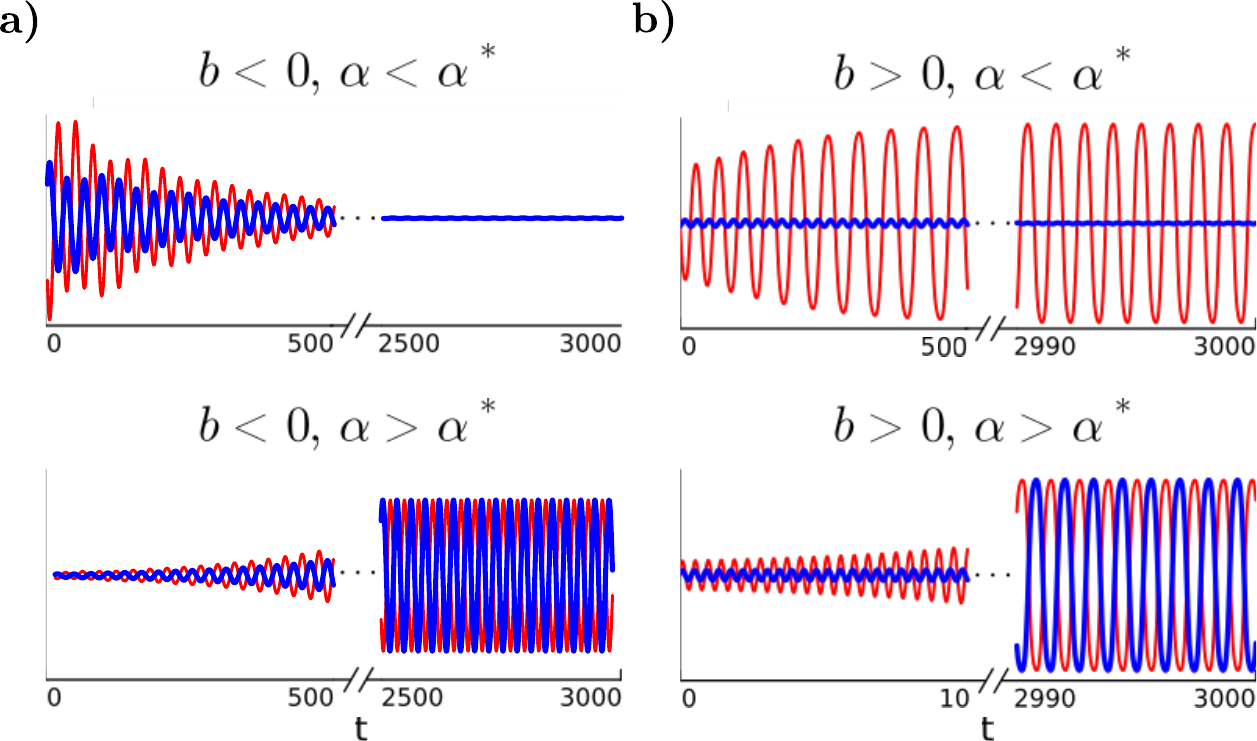}
    \caption{Dynamical behaviors close to a supercritical vs a subcritical Hopf bifurcation.
    \textbf{a)} Pre-bifurcation (top) and post-bifurcation (bottom) behaviors for the supercritical case. Pre-bifurcation, all trajectories exhibit damped oscillations converging to the origin. Post-bifurcation, all trajectories converge to a stable limit cycle oscillation emerging at bifurcation. \textbf{b)} Pre-bifurcation (top) and post-bifurcation (bottom) behaviors for the subcritical case. Pre-bifurcation, some trajectories exhibit damped oscillations converging to the origin while other trajectories converge toward a stable limit cycle. The two kinds of trajectories are separated by an unstable limit cycle emerging at bifurcation. Post-bifurcation, all non-equilibrium trajectories converge to the stable limit cycle.}
    \label{fig:crit_mix}
\end{figure}

When $b<0$ (supercritical Hopf bifurcation), the limit cycle emerging at the Hopf bifurcation is stable and exists for $\alpha$ or $\beta$ close to and above their critical values $\alpha^*$ or $\beta^*$, respectively. When $b>0$ (subcritical Hopf bifurcation), the limit cycle emerging at the Hopf bifurcation is unstable and exists for $\alpha$ or $\beta$ close to and below their critical values $\alpha^*$ or $\beta^*$, respectively. In the subcritical case, the unstable limit cycle is surrounded by a larger amplitude stable limit cycle that persists past the bifurcation point\footnote{The proof of this fact goes beyond the scope of this paper and involves invoking boundedness of the trajectories of model~\eqref{eq:ei} and computing higher-order derivatives of similar kinds as coefficients $a$ and $b$ in Theorem~\ref{theo:hopf}.}. Figure~\ref{fig:crit_mix} illustrates the qualitative difference between a supercritical and a subcritical Hopf bifurcation, as predicted by Theorem~\ref{theo:coeff_gen}.

We were not able to derive general conditions guaranteeing a given sign for $b$. However, the following corollary (proved in the online version \cite{juarezalvarez2024analysis}) provides a sufficient condition for the negativity of coefficient $b<0$, thus implying a supercritical Hopf bifurcation and stable limit cycle oscillations.

\begin{restatable}{mycor}{critfirst}
\label{cor:crit}
    If the strictly leading right eigenvectors of matrix $A$ are modulus-homogeneous, and $\varepsilon>0$ is small enough, then $b$ is negative, and the Hopf bifurcation undergone by system~\eqref{eq:ei} is supercritical.
\end{restatable}

One important special case of Corollary \ref{cor:crit} is when matrix $A$ is switching equivalent (see Section \ref{sec:math}) to either a positive in-regular or a non-negative in-regular irreducible matrix $A=MPM$. This case has been studied in previous works~\cite{juarez21}. As positive eigenvector $\vo{1}_N$ forms an eigenpair with positive eigenvalue $d>0$, then Perron-Frobenius theory~\cite[Theorem~8.4.4 and Exercise~8.4.P21]{horn} (see also Section~\ref{sec: fast dom}) implies that $d$ is the leading eigenvalue and that $A$ has a modulus-homogeneous leading eigenvector $M\vo{1}_N$. By applying Corollary~\ref{cor:crit}, we conclude that the rhythmic profile arising from the bifurcation must correspond to switching synchronization (see Definition~\ref{def:rp_types}), whose in-phase and anti-phase oscillators are determined by the signs of $M\vo{1}_N$.

\section{Designing rhythmic networks}\label{sec:build}

The results in the previous sections suggest a constructive way to design rhythmic networks with a desired rhythmic profile. Namely, if $\rho_2,\ldots,\rho_N$, all less than $\rho_1=1$, are the target relative amplitudes, and $\theta_2,\ldots,\theta_N$ are the target phase differences, it suffices to find an adjacency matrix $A$ such that $\vo{w}_x=(1,\rho_2e^{i\theta_2},\ldots,\rho_Ne^{i\theta_N})$ is a right leading eigenvector associated to a strictly leading eigenvalue $\mu_1$. Before discussing simple ways to achieve this, let us distinguish two important cases:
\begin{itemize}
    \item[i)] $\vo{w}_x\in\mathds{R}^N$, i.e. $\theta_j\in\{0,\pi\}\,\mathrm{mod}\,2\pi$ for all $j=2,\ldots,N$.
    \item[ii)] $\vo{w}_x\not\in\mathds{R}^N$, i.e. $\theta_j\not\in\{0,\pi\}\,\mathrm{mod}\,2\pi$ for at least one $j$.
\end{itemize}
In Case~{i)}, the strictly leading $A$-eigenvalue $\mu_1$ is real and the modulus of the associated strictly leading $J_0$-eigenvalues $\nu_1^{\pm}$, given by~\eqref{eq:rmodul}, is $O\left(\varepsilon^{1/2}\right)$. This implies that we cannot arbitrarily control the period $T=2\pi\abs{\nu_1^{\pm}}^{-1}$ of the emerging rhythmic profile, which diverges to infinity as $\varepsilon\to 0$. However, in practice, given a sufficiently large timescale separation, that is, a sufficiently small {\it fixed} $\varepsilon$, we can achieve a desired period by suitably scaling the model vector field, i.e., by suitably speeding up all the model variables. This problem is absent in Case~{ii)} because $\mu_1$ is not real and, using~\eqref{eq:rmodul}, the modulus of the strictly leading $J_0$-eigenvalues $\nu_{1,2}^{+}$ is approximately $\beta{\rm Im}(\mu_1)>0 + O\left(\varepsilon^{1/2}\right)$. Thus, when the leading eigenstructure of $A$ is not real, and therefore relative phases of the emerging rhythm are not constrained to be $0$ or $\pi$, then the emerging rhythm period is approximately $T\approx 2\pi(\beta{\rm Im}(\mu_1))^{-1}$, which is fully controllable by suitably designing $\mu_1$.

We illustrate the construction of matrix $A$ on two specific rhythmic profile control problems: {\it amplitude control}, as an example of Case i), and {\it phase control}, as an example of Case ii). We consider the case in which the network topology is unconstrained and discuss extensions to the constrained case in Section~\ref{sec:disc}.

\subsection{Rhythm amplitude control}

Our aim is to achieve a network rhythm in which oscillations are either in-phase or anti-phase, i.e., $\theta_2,\cdots,\theta_N\in\{0,\pi\}\,\mathrm{mod}\,2\pi$, but with different desired relative oscillation amplitudes $\rho_2,\ldots,\rho_N$. With an abuse of terminology, we allow the amplitude $\rho_j$ to be negative, which is equivalent to setting $\theta_j=\pi$ (anti-phase oscillations), but still impose the constraint $|\rho_j|\leqslant \rho_1=1$. A simple way to build an adjacency matrix $A$ leading to such a relative rhythmic profile is the following:
\begin{enumerate}
    \item Let $\vo{w}_x=(1,\rho_2,\ldots,\rho_N)$.
    \item Pick $\mu_1>0$, $\mu_2,\ldots,\mu_N<\mu_1$, and let $D={\rm diag}(\mu_1,\mu_2,\ldots,\mu_N)$.
    \item Find an ordered basis ${\mathcal B}=\{\vo{w}_x,\vo{u}_2,\ldots,\vo{u}_N\}$ of $\mathds{R}^N$ and let $Q=(\vo{w}_x\ \vo{u}_2\ \cdots\ \vo{u}_N)$ be the change of variable from the canonical basis of $\mathds{R}^N$ to $\mathcal B$.
    \item Define $A=QDQ^{-1}$.
\end{enumerate}

The constructed $A$ has $\mu_1$ as its leading eigenvalue, and $\vo{w}_x$ as a leading eigenvector. Since $(\vo{w}_x)_1=1$, a possibility to build the basis $\mathcal B$ is to pick $\vo{u}_j=\vo{e}_j$, which leads to
\[
A=\left(\begin{array}{cccc}
    \mu_1 & 0 & \ldots & 0  \\
     w_2(\mu_1-\mu_2) & \mu_2 & \ldots & 0\\
     \vdots & \vdots & \ddots & \vdots \\
     w_N(\mu_1-\mu_N) & 0 & \ldots & \mu_N
\end{array}\right),
\]
where $w_j:=(\vo{w}_x)_j$. Such an adjacency matrix defines a star topology, where oscillations are driven by the center node $(x_1,y_1)$. Observe that each oscillator has also a self-loop with weight $\mu_j$. The result of this design is illustrated in Figure~\ref{fig:design}, top.

\begin{figure}
    \centering    
    \includegraphics[width=0.48\textwidth]{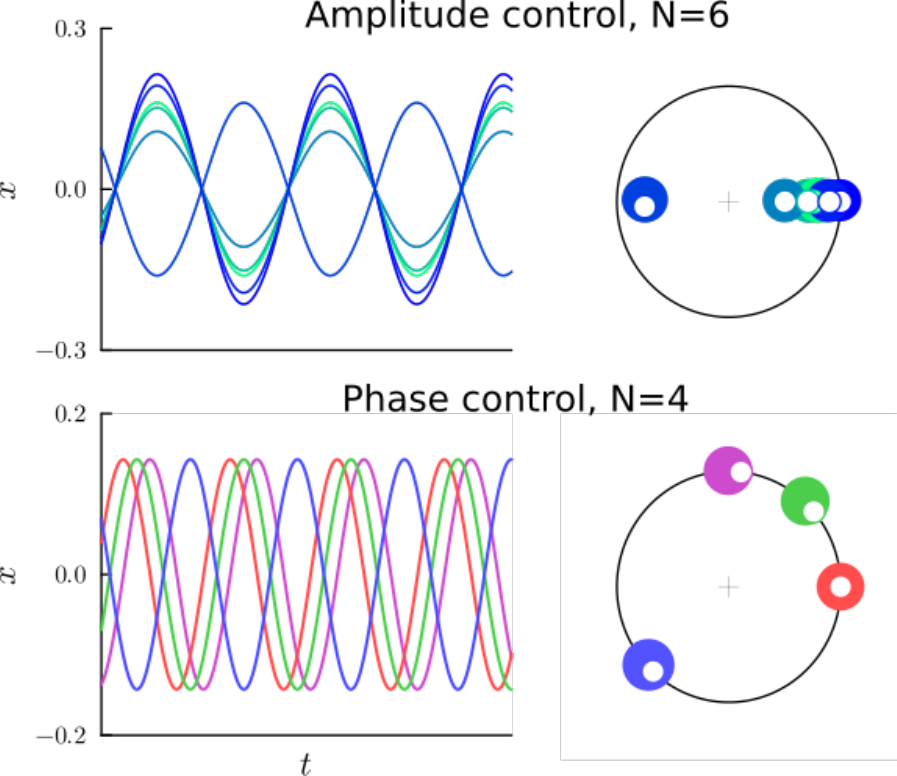}
    \caption{Examples of controlled rhythmic profiles with matrices built through the algorithm described in this section. \emph{Top:} 
    a real leading eigenvector, with some entries being possibly of opposite signs, results in in-phase or anti-phase oscillations. \emph{Below:} a modulus-homogeneous leading eigenvector where at least one entry is not real leads to a shifting synchronization behavior.}
    \label{fig:design}
\end{figure}

\subsection{Rhythm phase control}

Our aim is to achieve a network rhythm in which oscillators have the same amplitude, i.e., $\rho_2=\cdots=\rho_N=1$, but with non-zero and non-$\pi$ desired relative phases $\theta_2,\ldots,\theta_N$. A simple way to build an adjacency matrix $A$ satisfying these constraints is the following:
\begin{enumerate}
    \item Let $\vo{w}_x=(1,e^{i\theta_2},\ldots,e^{i\theta_N})$. Observe that, if $\theta_j\not\in\{0,\pi\}\,\mathrm{mod}\,2\pi$ for any $j>1$, then $\vo{w}_x$ and $\overline{\vo{w}}_x$ are linearly independent.
    \item Pick $\mu_1=u_1+iv_1$ with $u_1,v_1>0$, $\mu_2=\cj{\mu}_1=u_1-iv_1$, and $\mu_3,\ldots,\mu_N\in\mathds{R}$, $\mu_j<u_1$. Let
$D= {\rm diag}(\mu_1,\overline{\mu}_1,\mu_3,\ldots,\mu_N)$.
\item Find an ordered basis ${\mathcal B}=\{\vo{w}_x,\overline{\vo{w}}_x,\vo{u}_3,\ldots,\vo{u}_N\}$ of $\mathds{C}^N$ such that $\vo{u}_3,\ldots,\vo{u}_N$ are real, and let $Q=(\vo{w}_x\ \cj{\vo{w}}_x\ \vo{u}_3\ \cdots\ \vo{u}_N)$ be the change of variable from the canonical basis of $\mathds{C}^N$ to $\mathcal B$.
\item Define $A=QDQ^{-1}$. Note that $A$ is real because $D$ is the complex Jordan form associated to the complex basis ${\mathcal B}$, and $Q$ is the change of basis that puts $A$ in its complex Jordan form.
\end{enumerate}

Observe that $A$ has $\mu_1$, $\mu_2=\cj{\mu}_1$ as strictly leading eigenvalues with $\vo{w}_x$ and $\cj{\vo{w}}_x$ as strictly leading eigenvectors. If $\theta_2\not\in\{0,\pi\}\,\mathrm{mod}\,2\pi$ then $\vo{w}_2\in\mathds{C}\backslash\mathds{R}$, and thus $\mathrm{Im}(\vo{w}_2)\not=0$. Therefore, a possibility to build the basis $\mathcal B$ is to pick $\vo{u}_j=\vo{e}_j$, which leads to adjacency matrix A given by
\[
\left(\begin{array}{ccccc}
    \tfrac{\mathrm{Im}(\cj{\mu}_1w_2)}{\mathrm{Im}(w_2)} & \tfrac{\mathrm{Im}(\mu_1)}{\mathrm{Im}(w_2)} & 0 & \ldots & 0  \\
    \abs{w_2}^2\tfrac{\mathrm{Im}(\cj{\mu}_1)}{\mathrm{Im}(w_2)} & \tfrac{\mathrm{Im}(\mu_1w_2)}{\mathrm{Im}(w_2)} & 0 & \ldots & 0  \\
    \tfrac{\mathrm{Im}((\cj{\mu}_1-\mu_3)w_2\cj{w}_3)}{\mathrm{Im}(w_2)} & \tfrac{\mathrm{Im}((\mu_1-\mu_3)w_3)}{\mathrm{Im}(w_2)} & \mu_3 & \ldots & 0  \\
    \vdots & \vdots & \vdots & \ddots & \vdots \\
    \tfrac{\mathrm{Im}((\cj{\mu}_1-\mu_N)w_2\cj{w}_N)}{\mathrm{Im}(w_2)} & \tfrac{\mathrm{Im}((\mu_1-\mu_N)w_N)}{\mathrm{Im}(w_2)} & 0 & \ldots & \mu_N
\end{array}\right)
\]
where $w_j:=(\vo{w}_x)_j$. Such an adjacency matrix corresponds to a star topology with a two-node core, in which the first two oscillators are mutually coupled and drive all the others. Observe that each oscillator may also exhibit a self-loop. The result of this design is illustrated in Figure~\ref{fig:design}, bottom.

\section{Discussion}\label{sec:disc}

We introduced new theoretical tools to design the rhythmic profile of a rhythmic network.  Our tools are constructive and can be used for analysis, control, and design. Furthermore, they are developed on a model that is compatible with neuromorphic engineering applications.

Future theoretical development for the extension of the main results includes passing from local to global analysis and considering more complicated (e.g., higher-dimensional or with nonlinear slow negative feedback) node dynamics. Another important theoretical development, related to the design strategy described in Section~\ref{sec:build}, is to consider the case in which the network is structured, i.e., only some edges are present and only the weights of those edges can be tuned to impose a desired leading eigenstructure. 

It is a question that we did not address here due to space constraints, and for the sake of conciseness, whether the mixed-feedback structure is necessary to ensure a simple and general mapping from the network adjacency matrix to the resulting rhythmic pattern. A natural model in which this question could be addressed is the Stuart-Landau oscillator~\cite{provansal87,aoyagi95}, also known as the Andronov-Hopf oscillator~\cite{izhikevich}, that is, the truncated normal form~\cite[Equation (3.4.8)]{guckenheimer83} of the Hopf bifurcation~\cite{andronov}. This model is given by a simple, third order polynomial expression which makes it widely used for the analysis of rhythmic networks \cite{aronson90,hoppensteadt96,tukhlina07,panteley15,panteley15SL,premalatha18,harrison23}. At the same time, it is not evidently organized by a mixed-feedback structure, making it the perfect candidate to understand to which extent the mixed-feedback structure simplifies or in which sense it is necessary for controlling the rhythmic profile of an oscillator network.

Applications include the hardware realization of model~\eqref{eq:ei} in neuromorphic electronics, the use of the resulting tunable rhythmic controller for locomotion in simple legged robots, as well as its application to biological rhythmogenesis in complex neuron networks such as the suprachiasmatic nucleus~\cite{juarez23}.

\bibliographystyle{elsarticle-num} 
\bibliography{collective}

\clearpage
\onecolumn
\appendices
\pagenumbering{alph}

\section{Some useful results}\label{app:results}

The following theorem (which will be used for bifurcation theoretical computations) follows from methods used for classic biorthogonality results on the eigenvectors of matrices $A$ and $A^t$~\cite[Theorem 7.7]{datta}.

\begin{mytheo}\label{theo:datta}
     Let $\lambda$ be a simple non-real eigenvalue with right eigenvector $\vo{w}$. Then there exists a left eigenvector $\vo{v}$ associated to $\cj{\lambda}$ such that $\vo{v}^t\vo{w}=0$ and $\cj{\vo{v}}^t\vo{w}\not=0$.
\end{mytheo}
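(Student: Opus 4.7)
The key observation is that, because $\lambda$ is non-real so $\lambda\neq\cj{\lambda}$, the orthogonality $\vo{v}^t\vo{w}=0$ is automatic for \emph{any} left eigenvector $\vo{v}$ associated to $\cj{\lambda}$. Indeed, starting from $\vo{v}^t A=\cj{\lambda}\vo{v}^t$ and $A\vo{w}=\lambda\vo{w}$, evaluating $\vo{v}^tA\vo{w}$ in two ways gives $(\lambda-\cj{\lambda})\vo{v}^t\vo{w}=0$, which forces $\vo{v}^t\vo{w}=0$. So the only real content of the theorem is to produce $\vo{v}$ with the additional nondegeneracy $\cj{\vo{v}}^t\vo{w}\neq0$.

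The plan is to bootstrap from the classical biorthogonality statement (Datta's Theorem~7.7, cited just before the statement): since $\lambda$ is a simple eigenvalue of the (real) matrix $A$ with right eigenvector $\vo{w}$, there exists a left eigenvector $\vo{u}$ of $A$ associated to $\lambda$ itself such that $\vo{u}^t\vo{w}\neq0$. I would then exploit the fact that the matrices of interest in the paper are real adjacency matrices. Conjugating $\vo{u}^tA=\lambda\vo{u}^t$ entry-wise and using $\cj{A}=A$ yields $\cj{\vo{u}}^tA=\cj{\lambda}\,\cj{\vo{u}}^t$, so $\cj{\vo{u}}$ is a left eigenvector of $A$ associated to $\cj{\lambda}$.

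Setting $\vo{v}:=\cj{\vo{u}}$ finishes the construction: the first property $\vo{v}^t\vo{w}=0$ holds by the automatic step above, while the second one reads $\cj{\vo{v}}^t\vo{w}=\vo{u}^t\vo{w}\neq0$ by the classical biorthogonality choice of $\vo{u}$.

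I do not expect any serious obstacle; the lemma is essentially a repackaging of the standard simple-eigenvalue biorthogonality combined with complex conjugation, once one has noted that realness of $A$ turns $\lambda$-left-eigenvectors into $\cj{\lambda}$-left-eigenvectors by conjugation. The only point worth being explicit about is that this realness hypothesis on $A$, although not spelled out in the statement, is in force throughout the paper (network adjacency matrices are real); without it, the example $A=\mathrm{diag}(\lambda,\cj{\lambda})$ with $\vo{w}=\vo{e}_1$ would already violate the conclusion.
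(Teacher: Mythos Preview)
Your argument is correct and is precisely the repackaging the paper has in mind: the paper does not give an explicit proof but only says the result ``follows from methods used for classic biorthogonality results on the eigenvectors of matrices $A$ and $A^t$'' (citing Datta, Theorem~7.7), and your proof is exactly that---apply the standard simple-eigenvalue biorthogonality to get a left $\lambda$-eigenvector $\vo{u}$ with $\vo{u}^t\vo{w}\neq0$, conjugate using realness of $A$ to obtain a left $\cj{\lambda}$-eigenvector $\vo{v}=\cj{\vo{u}}$, and note that $\vo{v}^t\vo{w}=0$ is automatic since $\lambda\neq\cj{\lambda}$. Your remark that the (unstated) realness of $A$ is essential is also well taken.
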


Our main results will largely rely on Hopf bifurcation theory. A Hopf bifurcation describes the emergence of limit cycles in a parameterized vector field as a (control) parameter crosses a critical value.
The following theorem (from \cite[Theorem 3.4.2]{guckenheimer83},\cite[Chapter VIII, Proposition 3.3]{golubitsky}) formalizes this idea.

\begin{mytheo}\label{theo:hopf}
    Suppose that model $\dot{\vo{x}}=\vo{f}(\vo{x},\alpha)$, $\vo{x}\in\mathds{R}^N$, $\alpha\in\mathds{R}$, has an equilibrium point at $(\vo{x}_0,\alpha_0)$.
    If the model Jacobian $J$ at $(\vo{x}_0,\alpha_0)$ has a simple pair of pure imaginary eigenvalues and all other eigenvalues with non-zero real parts
    then there exists a smooth curve of equilibria $(\vo{x}(\alpha),\alpha)$ such that $\vo{x}(\alpha_0)=\vo{x}_0$, and eigenvalues $\lambda(\alpha)$, $\cj{\lambda}(\alpha)$ which are pure imaginary at $\alpha=\alpha_0$ vary smoothly with $\alpha$. Let $\vo{v}$ and $\vo{w}$ be the left and right eigenvectors of $J$ at $(\vo{x}_0,\alpha_0)$, satisfying $\vo{v}^tJ=i\abs{\lambda}\vo{v}^t$ and $J\vo{w}=-i\abs{\lambda}\vo{w}$, $\lambda\not=0$, such that $\cj{\vo{v}}^t\vo{w}=2$ and $\vo{v}^t\vo{w}=0$.
    Let $a=\frac{\partial\mathrm{Re}(\lambda)}{\partial\alpha}(\vo{x_0},\alpha_0)$
    and $b=\tfrac{1}{16}\mathrm{Re}\left(\ip{\vo{v}}{(\mathrm{d}^3\vo{f})_{\vo{x}_0,\alpha_0}(\vo{w},\vo{w},\cj{\vo{w}})}\right)$.
    If $a>0$ then for $\alpha$ sufficiently close to $\alpha_0$, the equilibrium $\vo{x}(\alpha)$ is stable for $\alpha<\alpha_0$ and unstable for $\alpha>\alpha_0$. Furthermore, if $b<0$, then for $\alpha>\alpha_0$, there exists a stable limit cycle solution $\vo{l}_{\alpha}(t)$ satisfying $\max_{t\in\mathds R}\|\vo{x}_0-\vo{l}_{\alpha}(t)\|=O\left((\alpha-\alpha_0)^{1/2}\right)$. Conversely, if $b>0$, then for $\alpha<\alpha_0$ there exists an unstable limit cycle solution $\vo{l}_{\alpha}(t)$ satisfying $\max_{t\in\mathds R}\|\vo{x}_0-\vo{l}_{\alpha}(t)\|=O\left((\alpha_0-\alpha)^{1/2}\right)$. (The results for $a<0$ are omitted for conciseness as they won't be used.)
\end{mytheo}

In the case of our model, we will further prove that all eigenvalues, other than the bifurcating pair, have negative real parts, which guarantees convergence to a certain invariant manifold, tightly related to the behavior to seek to achieve.

Figure~\ref{fig:bifcontrast} illustrates this theorem for the case $a>0$, $b<0$.

\section{Proofs of theorems}\label{app:theorems}

\rhythm*

\begin{proof}
    By Lemmata \ref{lem:rexist} and \ref{lem:cexist}, $J_0$ has a pair of pure imaginary non-real complex eigenvalues $\nu_1^\pm$ (if $\mu_1$ is real) or $\nmax{1,2}$ (if $\mu_1$ is non-real) satisfying $\tfrac{\partial\mathrm{Re}(\nmax{1})}{\partial\alpha}(\acrit{\beta,1}(\varepsilon),\beta,\varepsilon)>0$ at $\alpha=\acrit{\beta,1}(\varepsilon)$. By Lemmata \ref{lem:rdom} and \ref{lem:cdom}, all other $J_0$-eigenvalues have negative real part. Thus, all the conditions of the Hopf Bifurcation Theorem~\ref{theo:hopf} are satisfied for the case $a>0$.
\end{proof}

\rhythmbeta*

\begin{proof}
    By Lemmata \ref{lem:rexist_beta} and \ref{lem:cexist_beta}, $J_0$ has a pair of pure imaginary non-real complex eigenvalues $\nu_1^\pm$ (if $\mu_1$ is real) or $\nmax{1,2}$ (if $\mu_1$ is non-real) satisfying $\tfrac{\partial\mathrm{Re}(\nmax{1})}{\partial\beta}(\alpha,\bcrit{\alpha,1}(\varepsilon),\varepsilon)>0$ at $\beta=\bcrit{\alpha,1}$. By Lemmata \ref{lem:rdom_beta} and \ref{lem:cdom_beta}, all other $J_0$-eigenvalues have negative real part. Thus, all the conditions of the Hopf Bifurcation Theorem~\ref{theo:hopf} are satisfied for the case $a>0$.
\end{proof}

\section{Proofs of lemmata}\label{app:proofs}

\rquadfirst*

\begin{proof}
    Start by observing that $\lambda^*=-\varepsilon$ does not satisfy quadratic condition \eqref{eq:rquad} for $\varepsilon\not=0$. We first prove that conditions \eqref{eq:rquad} and \eqref{eq:rquad_eq} are equivalent. Indeed,
    \begin{align*}
    \beta\mu=1-\alpha+\lambda+\tfrac{\varepsilon}{\varepsilon+\lambda}\Leftrightarrow\beta\mu(\varepsilon+\lambda)=(\lambda+1-\alpha)(\lambda+\varepsilon)+\varepsilon\Leftrightarrow\beta\mu\lambda+\varepsilon\beta\mu=\lambda^2+(1+\varepsilon-\alpha)\lambda+\varepsilon(2-\alpha).    
    \end{align*}
    
    Given $\mu\in\sigma(A)$ and a non-zero right eigenvector $\vo{w}_x\in\mathds{C}^N$, consider $\lambda\in\mathds{C}$ any complex number satisfying \eqref{eq:rquad}, or equivalently \eqref{eq:rquad_eq}.
    By proposing $\vo{w}=(\vo{w}_x^t\vert\tfrac{\varepsilon}{\varepsilon+\lambda}\vo{w}_x^t)^t\in\mathds{C}^{2N}$ it suffices to show that $J_0\vo{w}=\lambda\vo{w}$. Certainly,
    \begin{align*}
        J_0\vo{w}&=\left(\begin{array}{c|c}
        (\alpha-1)I_N+\beta A & -I_N  \\
        \hline
        \varepsilon I_N & -\varepsilon I_N 
    \end{array}\right)\left(\begin{array}{c}
        \vo{w}_x   \\
        \hline
        \tfrac{\varepsilon}{\varepsilon+\lambda}\vo{w}_x
    \end{array}\right)=\left(\begin{array}{c}
        (\alpha-1)\vo{w}_x+\beta A\vo{w_x}-\tfrac{\varepsilon}{\varepsilon+\lambda}\vo{w}_x   \\
           \hline
        \varepsilon\vo{w}_x-\tfrac{\varepsilon^2}{\varepsilon+\lambda}\vo{w}_x  
    \end{array}\right)\\
    &=\left(\begin{array}{c}
        (\alpha-1-\tfrac{\varepsilon}{\varepsilon+\lambda})\vo{w}_x+\beta\mu\vo{w}_x   \\
           \hline
        (\varepsilon-\tfrac{\varepsilon^2}{\varepsilon+\lambda})\vo{w}_x  
    \end{array}\right)
    =\left(\begin{array}{c}
        (\lambda-\beta\mu)\vo{w}_x+\beta\mu\vo{w}_x   \\
           \hline
        \tfrac{\varepsilon^2+\varepsilon\lambda-\varepsilon^2}{\varepsilon+\lambda}\vo{w}_x  
    \end{array}\right)=\left(\begin{array}{c}
        \lambda\vo{w}_x   \\
           \hline
        \lambda\tfrac{\varepsilon}{\varepsilon+\lambda}\vo{w}_x  
    \end{array}\right)=\lambda\vo{w}.    
    \end{align*}
    Conversely, suppose that $\lambda\in\sigma(J_0)$ is an eigenvalue with an associated non-zero eigenvector $\vo{w}=(\vo{w}_x^t\vert\vo{w}_y^t)^t\in\mathds{C}^{2N}$ so that $(J_0-\lambda I_{2N})\vo{w}=\vo{0}_{2N}$. This translates to
    \begin{align*}
        \vo{0}_{2N}=\left(\begin{array}{c|c}
        (\alpha-1-\lambda)I_N+\beta A & -I_N  \\
        \hline
        \varepsilon I_N & -(\varepsilon+\lambda)I_N 
    \end{array}\right)\left(\begin{array}{c}
        \vo{w}_x   \\
        \hline
        \vo{w}_y  
    \end{array}\right)=\left(\begin{array}{c}
        (\alpha-1-\lambda)\vo{w}_x+\beta A\vo{w}_x-\vo{w}_y   \\
           \hline
        \varepsilon\vo{w}_x-(\varepsilon+\lambda)\vo{w}_y 
    \end{array}\right),
    \end{align*}

from whence it is seen that $\lambda^*=-\varepsilon$ is not a right eigenvalue as $\varepsilon\not=0$. The last equality is equivalent to the system of vector equations
\begin{align*}
    \beta A\vo{w}_x&=(1-\alpha+\lambda+\tfrac{\varepsilon}{\varepsilon+\lambda_1})\vo{w}_x,\\
        \vo{w}_y&=\tfrac{\varepsilon}{\varepsilon+\lambda}\vo{w}_x,
\end{align*}
so that $\mu=\tfrac{1}{\beta}(1-\alpha+\lambda+\tfrac{\varepsilon}{\varepsilon+\lambda})$ must be an eigenvalue for matrix $A$. Since this has already been seen to be equivalent to quadratic condition \eqref{eq:rquad}, it concludes the proof.
\end{proof}

\lquadfirst*

\begin{proof}
    Given $\mu\in\sigma(A)$ and a non-zero vector $\vo{v}_x\in\mathds{C}^N$ such that $\vo{v}_x^tA=\mu\vo{v}_x^t$, consider $\lambda\in\mathds{C}\backslash\{-\varepsilon\}$ any complex number satisfying \eqref{eq:rquad_eq}. By proposing $\vo{v}=(\vo{v}_x^t\vert\tfrac{-1}{\varepsilon+\lambda}\vo{v}_x^t)^t\in\mathds{C}^{2N}$, it suffices to show that $\vo{v}^tJ_0=\lambda\vo{v}^t$. Certainly,
    \begin{align*}
        \vo{v}^tJ_0&=(\vo{v}_x^t\vert\tfrac{-1}{\varepsilon+\lambda}\vo{v}_x^t)\left(\begin{array}{c|c}
        (\alpha-1)I_N+\beta A & -I_N  \\
        \hline
        \varepsilon I_N & -\varepsilon I_N 
    \end{array}\right)=\left(\begin{array}{c}
        (\alpha-1)\vo{v}_x^t+\beta A\vo{v}_x^t-\tfrac{\varepsilon}{\varepsilon+\lambda}\vo{v}_x^t   \\
           \hline
        -\vo{v}_x^t+\tfrac{\varepsilon}{\varepsilon+\lambda}\vo{v}_x  
    \end{array}\right)^t\\
    &=\left(\begin{array}{c}
        (\alpha-1-\tfrac{\varepsilon}{\varepsilon+\lambda})\vo{v}_x^t+\beta\mu\vo{v}_x^t   \\
           \hline
        (-1+\tfrac{\varepsilon}{\varepsilon+\lambda})\vo{v}_x^t  
    \end{array}\right)^t=\left(\begin{array}{c}
        (\lambda-\beta\mu)\vo{v}_x^t+\beta\mu\vo{v}_x^t   \\
           \hline
        \tfrac{-\lambda}{\varepsilon+\lambda}\vo{v}_x^t 
    \end{array}\right)^t=\lambda(\vo{v}_x^t\vert\tfrac{-1}{\varepsilon+\lambda}\vo{v}_x^t)=\lambda\vo{v}^t.
    \end{align*}
    Conversely, suppose that $\lambda\in\sigma(J_0)$ is a left eigenvalue with an associated non-zero eigenvector $\vo{v}=(\vo{v}_x^t\vert\vo{v}_y^t)^t\in\mathds{C}^{2N}$ so that $\vo{v}^t(J_0-\lambda I_{2N})=\vo{0}^t_{2N}$. This translates to
    \begin{align*}
    \vo{0}_{2N}^t&=(\vo{v}_x^t\vert\vo{v}_y^t)\left(\begin{array}{c|c}
        (\alpha-1-\lambda)I_N+\beta A & -I_N  \\
        \hline
        \varepsilon I_N & -(\varepsilon+\lambda)I_N 
    \end{array}\right)=\left(\begin{array}{c}
        (\alpha-1-\lambda)\vo{v}_x^t+\beta A\vo{v}_x+\varepsilon\vo{v}_y^t   \\
           \hline
        -\vo{v}_x^t-(\varepsilon+\lambda)\vo{v}_y^t  
    \end{array}\right)^t,    
    \end{align*}
    
from whence it is seen that $\lambda^*=-\varepsilon$ is not an eigenvalue. The last equality is equivalent to the vector equation system
\begin{align*}
    \beta A\vo{v}_x&=(1-\alpha+\lambda+\tfrac{\varepsilon}{\varepsilon+\lambda})\vo{v}_x,\\
    \vo{v}_y&=-\tfrac{1}{\varepsilon+\lambda}\vo{v}_x,
\end{align*}
so that $\mu=\tfrac{1}{\beta}(1-\alpha+\lambda+\tfrac{\varepsilon}{\varepsilon+\lambda})$ must be an eigenvalue for matrix $A$. This concludes the proof.
\end{proof}

\conjfirst*

\begin{proof}

    In the first case we assume that $\mu_j$, as well as the two distinct solutions of
    $$  \lambda^2+(1+\varepsilon-\alpha-\beta\mu)\lambda+\varepsilon(2-\alpha-\beta\mu)=0,  $$
    are real. This gives straightforward expressions for
    \begin{align*}
        \nu^{+}_j&=\dfrac{\alpha+\beta\mu-1-\varepsilon+\sqrt{(\alpha+\beta\mu-1-\varepsilon)^2-4\varepsilon(2-\alpha-\beta\mu)}}{2},\\
        \nu^{-}_j&=\dfrac{\alpha+\beta\mu-1-\varepsilon-\sqrt{(\alpha+\beta\mu-1-\varepsilon)^2-4\varepsilon(2-\alpha-\beta\mu)}}{2}.
    \end{align*}
    Now observe that $\nu^-_j<\nu^+_j$ if and only if the discriminant $(\alpha+\beta\mu-1-\varepsilon)^2-4\varepsilon(2-\alpha-\beta\mu)$ is non-zero. If $\alpha+\beta\mu=1$, then the discriminant is reduced to $\varepsilon^2-4\varepsilon$, which is non-zero for $\varepsilon\in(0,4)$. If $\alpha+\beta\mu\not=1$, then the discriminant is strictly positive for $\varepsilon=0$, and therefore it is kept strictly positive for small enough values of $\varepsilon$, thus completing this part of the proof. In the second case it suffices to see that, for $\mu_j\in\mathds{R}$, eigenvalues $\nu_{\mu}^{\pm}=\Phi_{\mu}\pm i\Psi_{\mu}$ as given by \eqref{eq:lead2} are complex numbers with real parts equal to $\tfrac{1}{2}(\alpha+\beta\mu-1-\varepsilon)$, and with imaginary parts of opposite signs. Finally, let $\mu_j=u_j+iv_j$ with $v_j\not=0$, and by hypothesis $\mu_{j+1}=u_j-iv_j$. By conjugating condition \eqref{eq:rquad}, one gets
    \begin{align*}
        0&=\cj{\lambda}^2+(1+\varepsilon-\alpha-\beta\cj{\mu}_j)\cj{\lambda}+\varepsilon(2-\beta\cj{\mu}_j-\alpha=\cj{\lambda}^2+(1+\varepsilon-\alpha-\beta\mu_{j+1})\cj{\lambda}+\varepsilon(2-\alpha-\beta\mu_{j+1}),
    \end{align*}
    which concludes that  $\cj{\lambda}_{j}$ satisfies condition \eqref{eq:rquad} written for $\mu_{j+1}$, where $\lmax{j}$ is a solution for condition \eqref{eq:rquad} written for $\mu_{j}$. It is straightforward to verify from Equations \eqref{eq:lead2} that $\fmax{j}=\fmax{j+1}$ as $v_j$ is always squared in them. The sign term $d$ for $\pmax{j+1}$ is given by
    \begin{align*}
      &\mathrm{sgn}(\beta(v_{j+1})(\alpha+\beta u_{j+1}-1-\varepsilon))=\mathrm{sgn}(\beta(-v_j)(\alpha+\beta u_j-1-\varepsilon))=-\mathrm{sgn}(\beta v_j(\alpha+\beta u_j-1-\varepsilon)).  
    \end{align*}
    Then it is clear that $\pmax{j+1}=-\pmax{j}$, which implies that $\nmax{j}$ and $\nmax{j+1}$ are conjugates. Equation $\cjnmin{j}=\nmin{j}$ is similarly verified, so we conclude the proof. 
\end{proof}

\rexistfirst*

\begin{proof}
    When evaluating at $\beta>0$, $\varepsilon\in(0,1)$, $\alpha=\acrit{\beta,1}(\varepsilon)$, eigenvalues $\nu_{\mu}^{\pm}=$ as given by \eqref{eq:lead2} are reduced to $\pm i\sqrt{\varepsilon(1-\varepsilon)}$, so that $\nmin{1}$ and $\nmax{1}$ are conjugate non-real numbers whenever $\varepsilon\in(0,1)$. Discriminant $\Delta=(\alpha+\beta\mu_1-1-\varepsilon)^2-4\varepsilon(2-\beta\mu_1-\alpha)$ reduces to $-4\varepsilon(1-\varepsilon)<0$ at $\beta>0$, $\varepsilon>0$, $\alpha=\acrit{\beta,1}(\varepsilon)$, so by continuity there must exist an open non-empty subset $V$ of the parameter space where the real part of $\nmax{1}$ is given by $\fmax{1}=\tfrac{1}{2}(\alpha+\beta\mu_1-1-\varepsilon)$, and thus $\tfrac{\partial\fmax{1}}{\partial\alpha}(\acrit{\beta,1}(\varepsilon),\beta,\varepsilon)=\tfrac{1}{2}>0$.
\end{proof}

\rexistbetafirst*

\begin{proof}
    Take $\bcrit{\alpha,1}(\varepsilon)=\tfrac{1}{\mu_1}(1+\varepsilon-\alpha)$. Proceed as in Lemma \ref{lem:rexist}, and check that $\tfrac{\partial\fmax{1}}{\partial\beta}(\alpha,\bcrit{\alpha,1}(\varepsilon),\varepsilon)=\tfrac{\mu_1}{2}\not=0$.
\end{proof}

\begin{figure*}
\begin{equation}\label{eq:lead2}    
    \begin{split}
        \fmax{\mu}(\alpha,\beta,\varepsilon)&=\mbox{\tiny{$\dfrac{c}{2}+\dfrac{1}{2}\sqrt{\dfrac{\sqrt{((\alpha+\beta u-1-\varepsilon)^2-\beta^2v^2-4\varepsilon(2-\alpha-\beta u))^2+4\beta^2v^2(\alpha+\beta u-1+\varepsilon)^2}+(\alpha+\beta u-1-\varepsilon)^2-\beta^2v^2-4\varepsilon(2-\alpha-\beta u)}{2}}$}},\\
        \pmax{\mu}(\alpha,\beta,\varepsilon)&=\mbox{\tiny{$\dfrac{\beta v}{2}+\dfrac{d}{2}\sqrt{\dfrac{\sqrt{((\alpha+\beta u-1-\varepsilon)^2-\beta^2v^2-4\varepsilon(2-\alpha-\beta u))^2+4\beta^2v^2(\alpha+\beta u-1+\varepsilon)^2}-(\alpha+\beta u-1-\varepsilon)^2+\beta^2v^2+4\varepsilon(2-\alpha-\beta u)}{2}}$}},\\
        \fmin{\mu}(\alpha,\beta,\varepsilon)&=\mbox{\tiny{$\dfrac{c}{2}-\dfrac{1}{2}\sqrt{\dfrac{\sqrt{((\alpha+\beta u-1-\varepsilon)^2-\beta^2v^2-4\varepsilon(2-\alpha-\beta u))^2+4\beta^2v^2(\alpha+\beta u-1+\varepsilon)^2}+(\alpha+\beta u-1-\varepsilon)^2-\beta^2v^2-4\varepsilon(2-\alpha-\beta u)}{2}}$}},\\
        \pmin{\mu}(\alpha,\beta,\varepsilon)&=\mbox{\tiny{$\dfrac{\beta v}{2}-\dfrac{d}{2}\sqrt{\dfrac{\sqrt{((\alpha+\beta u-1-\varepsilon)^2-\beta^2v^2-4\varepsilon(2-\alpha-\beta u))^2+4\beta^2v^2(\alpha+\beta u-1+\varepsilon)^2}-(\alpha+\beta u-1-\varepsilon)^2+\beta^2v^2+4\varepsilon(2-\alpha-\beta u)}{2}}$}}.
    \end{split}
\end{equation} 
\end{figure*}

\cexistfirst*

\begin{proof}
    Take $\mu_1=u+iv$. By formulae \eqref{eq:lead2}, equation $\fmax{\mu}=0$ is equivalent to setting $\alpha+\beta u-1-\varepsilon$ equal to

\begin{equation}\label{eq:fmax_zero}
    -\sqrt{\dfrac{\mbox{\tiny{$\sqrt{((\alpha+\beta u-1-\varepsilon)^2-\beta^2v^2-4\varepsilon(2-\alpha-\beta u))^2+4\beta^2v^2(\alpha+\beta u-1+\varepsilon)^2}+(\alpha+\beta u-1-\varepsilon)^2-\beta^2v^2-4\varepsilon(2-\alpha-\beta u)$}}}{2}},
\end{equation}
    which in turn implies
\begin{align}
    \label{eq:implicit_alpha_og}
    &
    \scriptstyle{(\alpha+\beta u-1-\varepsilon)^2=\tfrac{\sqrt{((\alpha+\beta u-1-\varepsilon)^2-\beta^2v^2-4\varepsilon(2-\alpha-\beta u))^2+4\beta^2v^2(\alpha+\beta u-1+\varepsilon)^2}+(\alpha+\beta u-1-\varepsilon)^2-\beta^2v^2-4\varepsilon(2-\alpha-\beta u)}{2}}\\
    \label{eq:implicit_alpha_sqrt}
    \Leftrightarrow&
    \scriptstyle{(\alpha+\beta u-1-\varepsilon)^2+\beta^2v^2+4\varepsilon(2-\alpha-\beta u)=\sqrt{((\alpha+\beta u-1-\varepsilon)^2-\beta^2v^2-4\varepsilon(2-\alpha-\beta u))^2+4\beta^2v^2(\alpha+\beta u-1+\varepsilon)^2}}\\
    \nonumber\Rightarrow&\,\,\scriptstyle{((\alpha+\beta u-1-\varepsilon)^2+\beta^2v^2+4\varepsilon(2-\alpha-\beta u))^2=((\alpha+\beta u-1-\varepsilon)^2-\beta^2v^2-4\varepsilon(2-\alpha-\beta u))^2+4\beta^2v^2(\alpha+\beta u-1+\varepsilon)^2}\\
    \nonumber\Leftrightarrow&\,\,(\alpha+\beta u-1-\varepsilon)^2(\beta^2v^2+4\varepsilon(2-\alpha-\beta u))=v^2(\alpha+\beta u-1+\varepsilon)^2\\
    \nonumber\Leftrightarrow&\,\,\beta^2v^2((\alpha+\beta u-1-\varepsilon)^2-(\alpha+\beta u-1+\varepsilon)^2)+4\varepsilon(\alpha+\beta u-1-\varepsilon)^2(2-\alpha-\beta u)=0\\
    \nonumber\Leftrightarrow&\,\,\varepsilon(\alpha+\beta u-1-\varepsilon)^2(2-\alpha-\beta u)-\beta^2v^2\varepsilon(\alpha+\beta u-1)=0\\ \label{eq:implicit_alpha_cub}
    \Leftrightarrow&\,\,(\alpha+\beta u-1-\varepsilon)^2(2-\alpha-\beta u)-\beta^2v^2(\alpha+\beta u-1)=0
\end{align}
    The resulting polynomial $p(\alpha,\varepsilon)$ in equivalence \eqref{eq:implicit_alpha_cub} is cubic in variable $\alpha$, which guarantees the existence of a real root for every $\varepsilon\in\mathds{R}$. Additionally, observe that
$$\tfrac{\partial p}{\partial \alpha}(\alpha,\varepsilon)=(\alpha+\beta u-1-\varepsilon)(5-3\alpha-2\beta u+\varepsilon)-\beta^2v^2\Rightarrow\tfrac{\partial p}{\partial \alpha}(1-\beta u,0)=-\beta^2v^2\not=0,$$
    so that the Implicit Function Theorem allows us to find a local $\mathscr{C}^1$ solution $(\acrit{\beta,1}(\varepsilon),\varepsilon)$ defined near point $(1-\beta u,0)$. Note that when $\alpha=1-\beta u$, $\varepsilon=0$, the following expression yields
$$(\alpha+\beta u-1-\varepsilon)^2+\beta^2v^2+4\varepsilon(2-\alpha-\beta u)=\beta^2v^2>0;$$
    therefore, equations \eqref{eq:implicit_alpha_og} through \eqref{eq:implicit_alpha_cub} are actual equivalences along solution $(\acrit{\beta,1}(\varepsilon),\varepsilon)$. It remains to see that critical value $\alpha=\acrit{\beta,1}(\varepsilon)$ satisfies $\fmax{1}=0$. To verify this, by once again using the Implicit Function Theorem, derivative $\acrit{\beta,1}'(\varepsilon)$ is found to be
$$\dfrac{\mathrm{d}\acrit{\beta,1}}{\mathrm{d}\varepsilon}(\varepsilon)=-\dfrac{\tfrac{\partial p}{\partial \varepsilon}}{\tfrac{\partial p}{\partial \alpha}}(\acrit{\beta,1}(\varepsilon),\varepsilon)=\dfrac{2(\acrit{\beta,1}(\varepsilon)+\beta u-1-\varepsilon)(2-\acrit{\beta,1}(\varepsilon)-\beta u)}{(\acrit{\beta,1}(\varepsilon)+\beta u-1-\varepsilon)(5-3\acrit{\beta,1}(\varepsilon)-2\beta u+\varepsilon)-\beta^2v^2},$$
    thus we get $\acrit{\beta,1}'(0)=0$ and $\acrit{\beta,1}'(\varepsilon)\not=0$ for small values of $\varepsilon>0$. This in particular implies the quadratic growth formula \eqref{eq:order}. By the definition of derivatives, this implies for small enough values of $\varepsilon$ that
$$\abs{\dfrac{\acrit{\beta,1}(\varepsilon)-1+\beta u}{\varepsilon}}<1,$$
    and thus, for $\varepsilon>0$, we get bounds for the growth of function $\acrit{\beta,1}$, which bounds \eqref{eq:order_bound}. These inequalities imply that term $\acrit{\beta,1}(\varepsilon)+\beta u-1-\varepsilon$ is negative for small enough values of $\varepsilon>0$, so Equation \eqref{eq:fmax_zero} is verified, and thus $\alpha=\acrit{\beta,1}(\varepsilon)$ satisfies $\fmax{1}=0$. Then for $\beta\in(0,\tfrac{1}{u})$, $\varepsilon>0$, $\alpha=\acrit{\beta,1}(\varepsilon)$ we have $\nmax{1}=i\pmax{1}$, which by formulae \eqref{eq:lead2}, equivalence \eqref{eq:implicit_alpha_sqrt},  and inequality \eqref{eq:order_bound} reduces to
$$\pmax{1}=\dfrac{\beta v}{2}+\dfrac{\mathrm{sgn}(\beta v)}{2}\sqrt{\beta^2v^2+4\varepsilon(2-\alpha-\beta u)}.$$
    Recall that we take $v>0$ in the non-real case. Then having $\pmax{1}=0$ for $\beta\in(0,\tfrac{1}{u})$, $\varepsilon>0$, $\alpha=\acrit{\beta,1}(\varepsilon)$ would imply $\alpha=2-\beta u$, which is false for small enough values of $\varepsilon>0$ as $\acrit{\beta,1}(0)=1-\beta u$, therefore making $\nmax{1}$ a pure imaginary, non-real number. Partially differentiating $\fmax{\mu}$ with respect to $\alpha$ yields
$$\dfrac{\partial\fmax{1}}{\partial\alpha}(\alpha,\beta,\varepsilon)=\dfrac{1}{2}+\dfrac{1}{4}\mbox{\tiny{$\tfrac{\tfrac{((\alpha+\beta u-1-\varepsilon)^2-\beta^2v^2-4\varepsilon(2-\alpha-\beta u))(\alpha+\beta u-1+\varepsilon)+2\beta^2v^2(\alpha+\beta u-1+\varepsilon)}{\sqrt{((\alpha+\beta u-1-\varepsilon)^2-\beta^2v^2-4\varepsilon(2-\alpha-\beta u))^2+4\beta^2v^2(\alpha+\beta u-1+\varepsilon)^2}}+\alpha+\beta u-1+\varepsilon}{\sqrt{\tfrac{\sqrt{((\alpha+\beta u-1-\varepsilon)^2-\beta^2v^2-4\varepsilon(2-\alpha-\beta u))^2+4\beta^2v^2(\alpha+\beta u-1+\varepsilon)^2}+(\alpha+\beta u-1-\varepsilon)^2-\beta^2v^2-4\varepsilon(2-\alpha-\beta u)}{2}}}$}}.$$
    In order to evaluate the preceding expression at $\beta\in(0,\tfrac{1}{u})$, $\varepsilon>0$ and $\alpha=\acrit{\beta,1}(\varepsilon)$, from equivalence \eqref{eq:implicit_alpha_og}, the denominator in the second term reduces to
$$\sqrt{\tfrac{\sqrt{((\alpha+\beta u-1-\varepsilon)^2-\beta^2v^2-4\varepsilon(2-\alpha-\beta u))^2+4\beta^2v^2(\alpha+\beta u-1+\varepsilon)^2}+(\alpha+\beta u-1-\varepsilon)^2-\beta^2v^2-4\varepsilon(2-\alpha-\beta u)}{2}}=\abs{\alpha+\beta u-1-\varepsilon}.$$
    This term is non-zero by inequality \eqref{eq:order_bound} as its argument is negative. The other square root term in $\tfrac{\partial\fmax{1}}{\partial\alpha}$ is similarly simplified by equivalence \eqref{eq:implicit_alpha_sqrt}. Thus the partial derivative $\tfrac{\partial\Phi}{\partial\alpha}(\acrit{\beta,1}(\varepsilon),\beta,\varepsilon)$ yields
$$\dfrac{1}{2}+\dfrac{1}{2}\left(\dfrac{\acrit{\beta,1}(\varepsilon)+\beta u-1+\varepsilon}{1+\varepsilon-\acrit{\beta,1}(\varepsilon)-\beta u}\right)\left(\dfrac{(\acrit{\beta,1}(\varepsilon)+\beta u-1-\varepsilon)^2+\beta^2v^2}{(\acrit{\beta,1}(\varepsilon)+\beta u-1-\varepsilon)^2+\beta^2v^2+4\varepsilon(2-\acrit{\beta,1}(\varepsilon)-\beta u)}\right).$$ 
    Inequality \eqref{eq:order_bound} guarantees that the second addendum in the previous expression is positive, which concludes that $\tfrac{\partial\fmax{\mu}}{\partial\alpha}$ is positive as well at 
    $\beta\in(0,\tfrac{1}{u})$, $\varepsilon>0$, $\alpha=\acrit{\beta,1}(\varepsilon)$. Finally, restrict the domain for $\acrit{\beta,1}(\varepsilon)$ so that all of the previous assumptions (its definition through the Implicit Function Theorem, and its quadratic growth) hold. 
\end{proof}

\cexistbetafirst*

\begin{proof}
    Proceed analogously as in Lemma \ref{lem:cexist}. The coefficient for $\beta^3$ in the polynomial term in equivalence \eqref{eq:implicit_alpha_cub} is $-u(u^2+v^2)$, which is non-zero by hypothesis. Then it is possible to find real solutions to \eqref{eq:implicit_alpha_cub} for $\beta$. Partially differentiating the associated polynomial with respect to $\beta$ now yields 
$$\tfrac{\partial p}{\partial \beta}(\beta,\varepsilon)=u(\alpha+\beta u-1-\varepsilon)(5-3\alpha-3\beta u+\varepsilon)-2\beta v^2(\alpha+\beta u-1)-\beta^2uv^2,$$
    which implies $\tfrac{\partial p}{\partial \beta}(\tfrac{1-\alpha}{u},0)=(\alpha-1)v^2\not=0$, so we may use the Implicit Function Theorem to find a $\mathscr{C}^1$ solution to \eqref{eq:implicit_alpha_cub}, denoted $\bcrit{\alpha,1}(\varepsilon)$, and satisfying $\bcrit{\alpha,1}(0)=\tfrac{1-\alpha}{u}$. In can be analogously seen that $\bcrit{\alpha,1}'(0)=0$, therefore guaranteeing inequality
    $$\abs{\dfrac{\bcrit{\alpha,1}(\varepsilon)-\tfrac{1-\alpha}{u}}{\varepsilon}}<\dfrac{1}{u}$$
    for small enough values of $\varepsilon>0$, which in turn implies bounds~\eqref{eq:order_bound_beta}.
    As before, from this last inequality it follows that term $\alpha+\bcrit{\alpha,1}(\varepsilon)-1-\varepsilon$ is negative, therefore satisfying $\fmax{1}=0$. Then $\nmax{1}=i\pmax1$, with $\pmax{1}$ the same expression as in Lemma \ref{lem:cexist}, only now for $\alpha\in(0,1)$, $\varepsilon>0$ and $\beta=\bcrit{\alpha,1}(\varepsilon)$. Partial derivative $\tfrac{\partial\fmax{1}}{\partial\beta}$ is given by
$$\dfrac{u}{2}+\dfrac{1}{4}\mbox{\tiny{$\tfrac{\tfrac{((\alpha+\beta u-1-\varepsilon)^2-\beta^2v^2-4\varepsilon(2-\alpha-\beta u))(u(\alpha+\beta u-1+\varepsilon)-2\beta v^2)+2\beta^2uv^2(\alpha+\beta u-1+\varepsilon)+2\beta v^2(\alpha+\beta u-1+\varepsilon)^2}{\sqrt{((\alpha+\beta u-1-\varepsilon)^2-\beta^2v^2-4\varepsilon(2-\alpha-\beta u))^2+4\beta^2v^2(\alpha+\beta u-1+\varepsilon)^2}}+u(\alpha+\beta u-1+\varepsilon)-2\beta v^2}{\sqrt{\tfrac{\sqrt{((\alpha+\beta u-1-\varepsilon)^2-\beta^2v^2-4\varepsilon(2-\alpha-\beta u))^2+4\beta^2v^2(\alpha+\beta u-1+\varepsilon)^2}+(\alpha+\beta u-1-\varepsilon)^2-\beta^2v^2-4\varepsilon(2-\alpha-\beta u)}{2}}}$}}.$$
    Once again we simplify this expression through equations \eqref{eq:implicit_alpha_og} and \eqref{eq:implicit_alpha_sqrt}, and inequality \eqref{eq:order_bound_beta}. This yields
    $$\dfrac{u}{2}+\dfrac{1}{2}\mbox{\tiny{$\dfrac{((\alpha+\beta u-1-\varepsilon)^2-\beta^2v^2-4\varepsilon(2-\alpha-\beta u))(u(\alpha+\beta u-1+\varepsilon)-\beta v^2)+2\beta^2uv^2(\alpha+\beta u-1+\varepsilon)+\beta v^2(\alpha+\beta u-1+\varepsilon)^2}{(1+\varepsilon-\alpha-\beta u)((\alpha+\beta u-1-\varepsilon)^2+\beta^2v^2+4\varepsilon(2-\alpha-\beta u))}$}}.$$
    Now observe that
    $$\lim_{\varepsilon\to0^*}((\alpha+\bcrit{\alpha,1}(\varepsilon) u-1-\varepsilon)^2-\bcrit{\alpha,1}(\varepsilon)^2v^2-4\varepsilon(2-\alpha-\bcrit{\alpha,1}(\varepsilon) u))(u(\alpha+\bcrit{\alpha,1}(\varepsilon) u-1+\varepsilon)-\bcrit{\alpha,1}(\varepsilon) v^2)=\dfrac{(1-\alpha)^3v^2}{u^3}$$
    which is positive as we imposed $\alpha\in(0,1)$. Thus the second addendum in the reduced expression for $\tfrac{\partial\fmax{1}}{\partial\beta}(\alpha,\bcrit{\alpha,1}(\varepsilon),\varepsilon)$ is positive for small enough values of $\varepsilon>0$, therefore guaranteeing that $\tfrac{\partial\fmax{1}}{\partial\beta}$ is positive as well for $\alpha\in(0,1)$, $\varepsilon>0$ and $\beta=\bcrit{\alpha,1}(\varepsilon)$, which concludes the proof.
\end{proof}

\rdomfirst*

\begin{proof}
    For the sake of simplicity we introduce the following auxiliary definition. Observe that for $\varepsilon=0$ the solutions to condition \eqref{eq:rquad} are directly computed as $\alpha+\beta\mu-1$ and 0. To analytically distinguish them, especially at $\alpha=1-\beta\mathrm{Re}(\mu)$, note that one of them varies linearly on $\alpha$, and the other one is constant. Thus we define the $\lMIN$-$J_0$-\emph{eigenvalue} associated to $\mu_j$, denoted as $\lmin{j}$ as the continuous solution to \eqref{eq:rquad} which satisfies $\tfrac{\partial\mathrm{Re}(\lmin{j})}{\partial\alpha}(\alpha,0)=0$, and the $\lMAX$-$J_0$-\emph{eigenvalue} associated to $\mu_j$, denoted as $\lmax{j}$ as the continuous solution to \eqref{eq:rquad} which satisfies $\tfrac{\partial\mathrm{Re}(\lmax{j})}{\partial\alpha}(\alpha,0)\not=0$. A general correspondence between $\lmin{j}$, $\lmax{j}$ and $\nmin{j}$, $\nmax{j}$ cannot be ascertained for arbitrary values of $\alpha$, $\beta$, and $\varepsilon$.
    
    Denote $\mu_j=u_j+iv_j$ for $j\in\{1,\ldots,k\}$. From Lemma \ref{lem:rexist} we know the explicit definition for the critical value function, $\acrit{\mu,1}(\varepsilon)=1+\varepsilon-\beta\mu_1$. By Lemmata \ref{lem:conj} and \ref{lem:rexist}, eigenvalues $\nmin{1}$ and $\nmax{1}$ are conjugate non-real numbers for small enough values of $\varepsilon>0$, $\alpha=\acrit{\beta,1}(\varepsilon)$. We split the remaining elements in $\sigma(J_0)$ into $\lMIN$-$J_0$- and $\lMAX$-$J_0$-eigenvalues, as previously defined, $\lmin{j}(\alpha,0)=0$, $\lmax{j}(\alpha,0)=\alpha+\beta\mu_j-1$. Then, for $j\in\{2,\ldots,k\}$, it is clear that $\mathrm{Re}(\lmax{j}(\acrit{1}(0),0))=(1-\beta u_1)+\beta u_j-1<0$. By continuity, this inequality is preserved for small enough values of $\varepsilon>0$, which takes care of the result for roots $\lmax{j}$. Now denote $\lmin{j}$ in their Cartesian form, $\lmin{j}=\varphi_j+i\psi_j$. Then the polynomial in \eqref{eq:rquad} is equivalently written
$$(\varphi_j+i\psi_j)^2+(1+\varepsilon_j-\alpha_j-\beta u_j-i\beta v_j)(\varphi_j+i\psi_j)+\varepsilon(2-\beta u_j-i\beta v_j-\alpha),$$
    which after developing yields
$$\varphi_j^2-\psi_j^2+2i\varphi_j\psi_j+(1+\varepsilon-\alpha-\beta u_j)\varphi_j+\beta v_j\psi_j+\varepsilon(2-\beta u_j-\alpha)+i((1+\varepsilon-\alpha-\beta u_j)\psi_j-\beta v_j\varphi_j-\beta\varepsilon v_j). $$
    The problem of finding the roots for the previously defined quadratic polynomial is equivalent to finding points in the zero level set of function $(F_j,G_j)$, where
\begin{equation}\label{eq:levelset}
    \begin{split}
        F_j(\alpha,\varepsilon,\varphi_j,\psi_j)&=\varphi_j^2-\psi_j^2+(1+\varepsilon-\alpha-\beta u_j)\varphi_j+\beta v_j\psi_j+\varepsilon(2-\beta u_j-\alpha),\\
        G_j(\alpha,\varepsilon,\varphi_j,\psi_j)&=2\varphi_j\psi_j+(1+\varepsilon-\alpha-\beta u_j)\psi_j-\beta v_j\varphi_j-\beta \varepsilon v_j.
    \end{split}
\end{equation}
    The Jacobian determinant of function $(F_j,G_j)$ for subsystem $(\varphi_j,\psi_j)$ is readily computed as
$$\abs{\dfrac{\partial(F_j,G_j)}{\partial(\varphi_j,\psi_j)}}=\abs{\begin{array}{cc}
    2\varphi_j+1+\varepsilon_j-\alpha-\beta u_j & -2\psi_j+\beta v_j  \\
    2\psi_j-\beta v_j & 2\varphi_j+1+\varepsilon-\alpha-\beta u_j 
\end{array}}=(2\varphi_j+1+\varepsilon-\alpha-\beta u_j)^2+(2\psi_j-\beta v_j)^2.$$
    Recall that $u_1>u_j$ for every $j\geqslant2$. Then, at solution $\varepsilon=\varphi=\psi=0$, $\alpha=1-\beta u_1$ and for every $j\geqslant2$, this determinant reduces to
$$\dfrac{\partial(F_j,G_j)}{\partial(\varphi_j,\psi_j)}=\beta^2(u_1-u_j)^2+\beta^2v_j^2\not=0,$$
    and therefore, by virtue of the Implicit Function Theorem, variables $\varphi_j$ and $\psi_j$ can be expressed as $\mathscr{C}^1$ functions $\Phi_j(\alpha,\varepsilon)$ and $\Psi_j(\alpha,\varepsilon)$ inside the zero level set for values $(\alpha,\varepsilon)$ close enough to $(1-\beta\mu_1,0)$. Moreover, it is possible to compute their derivatives with respect to $\varepsilon$ by
$$\dfrac{\partial(\Phi_j,\Psi_j)}{\partial\varepsilon}=-\dfrac{\partial(F_j,G_j)}{\partial(\varphi_j,\psi_j)}^{-1}\left(\dfrac{\partial(F_j,G_j)}{\partial\varepsilon}+\dfrac{\partial(F_j,G_j)}{\partial\alpha}\dfrac{\mathrm{d}\acrit{\beta,1}}{\mathrm{d}\varepsilon}\right),$$
    where the second term inside the parentheses is given by the Chain Rule as $\alpha=\acrit{\beta,1}(\varepsilon)$ varies as a function of $\varepsilon>0$. Evaluating at $(1-\beta\mu_1,0,0,0)$ yields
$$\dfrac{\partial(\Phi_j,\Psi_j)}{\partial\varepsilon}(1-\beta u_1,0)=\dfrac{-1}{\beta^2(u_1-u_j)^2+\beta^2v_j^2}\left(\begin{array}{c}
    \beta u_1-\beta u_j+\beta^2(u_1-u_j)^2+\beta^2v_j^2-0 \\
    \beta v_j-0  
\end{array}\right),$$
    from whence it follows that $\tfrac{\partial\Phi_j}{\partial\varepsilon}<0$ in a vicinity of $(1-\beta\mu_1,0)$. As $\Phi_j(\alpha,0)=0$, this implies that $\mathrm{Re}(\lmin{j})<0$ for small enough values of $\varepsilon>0$ and $\alpha=\acrit{\beta,1}(\varepsilon)$, for every $j\geqslant2$, thus proving the desired result.
\end{proof}

\cdomfirst*

\begin{proof}
    Recall that  Lemma \ref{lem:conj} guarantees that $\cj{\nu}_1^-=\nmin{2}$ and $\cj{\nu}_1^+=\nmax{2}$. We proceed analogously as in Lemma \ref{lem:rdom}, which proves the result for associated eigenvalues $\nmin{j}$ and $\nmax{j}$, that is, for $\lmin{j}$ and $\lmax{j}$, for $j\in\{3,\ldots,k\}$. Now we have to prove that $\fmin{1}=\fmin{2}$ is negative as well. First
    observe that the expression for $\pmax{1}$ found in Lemma \ref{lem:cexist} guarantees that $\nmax{1}$ is a $\lMAX$-eigenvalue, therefore making $\nmin{1}$ a $\lMIN$-eigenvalue, $\nmin{1}(\alpha,0)=0+0i$. When applying the Implicit Function Theorem to zero level set-condition \eqref{eq:levelset} for $j\in\{1,2\}$, at solution $(\alpha,\varepsilon,\phi_1,\psi_1)=(1-\beta u_1,0,0,0)$, the Jacobian determinant is seen to be 
    $$\dfrac{\partial(F_1,G_1)}{\partial(\varphi_1,\psi_1)}(1-\beta u_1,0)=\beta^2(u_1-u_1)^2+\beta^2v_1^2=\beta^2v_1^2\not=0$$
    since $\mu_1=u_1+iv_1\not\in\mathds{R}$, so that $v_1\not=0$, and $\beta>0$. Recall from \eqref{eq:order} that $\acrit{\beta,1}'(0)=0$ in the non-real case. Now, differentiating $\Phi_1$ with respect to $\varepsilon$ yields
    $$\dfrac{\partial(\Phi_1,\Psi_1)}{\partial\varepsilon}(1-\beta u_1,0)=\dfrac{-1}{\beta^2v_1^2}\left(\begin{array}{c}
    \beta^2v_1^2-\acrit{1}'(0)(0+\varphi_1) \\
    \beta v_1-\acrit{1}'(0)\psi_1  
\end{array}\right)=\left(\begin{array}{c}
     -1  \\
     -\tfrac{1}{\beta v_1} 
\end{array}\right),$$
    and thus $\tfrac{\partial\Phi_1}{\partial\varepsilon}(1-u_1,0)=-1<0$, from whence the result follows.
\end{proof}

\coeffgenfirst*

\begin{proof}
    By Theorem \ref{theo:datta}, $\vo{v}$ can be rescaled to satisfy $\vo{v}^t\vo{w}=0$, $\cj{\vo{v}}^t\vo{w}=2$ which, by virtue of Equations (\ref{eq:reigvec}) and (\ref{eq:leigvec}), translate to
\begin{equation}\label{eq:eigvecs_rescal}
\left(1-\tfrac{\varepsilon}{\varepsilon^2+\abs{\nmax{1}}^2}\right)\vo{v}_x^t\vo{w}_x=0,\ \ 
    \left(1-\tfrac{\varepsilon}{(\varepsilon-\nmax{1})^2}\right)\cj{\vo{v}}_x^t\vo{w}_x=2,
\end{equation}

Determining $b$ requires knowing the first three directional derivatives of vector field $\vo{f}$. Given any direction $\vo{r}=(\vo{r}^t_x\vert\vo{r}_y^t)^t$, the first one is given by
\begin{align*}
    &\sum_{k=1}^N
    \left(\begin{array}{c}                          
    -\delta_{jk}+B^{\alpha}_{jk}S'\left(\sum_{l=1}^NB_{jl}^{\alpha}x_l\right)\\
         \hline
    \varepsilon\delta_{jk}
    \end{array}\right)^{l\in\{1,\ldots,N\}}_{(\vo{0}_N,\vo{0}_N),0}(\vo{r}_x)_k 
    + \sum_{k=1}^N
    \left(\begin{array}{c}
         -\delta_{jk}\\
         \hline
        -\varepsilon\delta_{jk}
    \end{array}\right)^{j\in\{1,\ldots,N\}}_{(\vo{0}_N,\vo{0}_N),0}(\vo{r}_y)_j 
    \\
    &=\sum_{k=1}^N
    \left(\begin{array}{c}
         -\delta_{jk}+B_{jk}^{0}S'(0)\\
         \hline
        \varepsilon\vo{e}_j
    \end{array}\right)^{j\in\{1,\ldots,N\}}(\vo{r}_x)_j + \sum_{k=1}^N
    \left(\begin{array}{c}
        -\vo{e}_j\\
        \hline
        -\varepsilon\vo{e}_j
    \end{array}\right)(\vo{r}_y)_j = 
    \left(\begin{array}{c}
         \vo{r}_x+B\vo{r}_x\\
         \hline
        \varepsilon\vo{r}_x
    \end{array}\right) - \left(\begin{array}{c}
        \vo{r}_y\\
        \hline
        \varepsilon\vo{r}_y\\
    \end{array}\right) \\
    & = \left(\begin{array}{c}
        -\vo{r}_x-\vo{r}_y+B\vo{r}_x\\
        \hline
        \varepsilon(\vo{r}_x-\vo{r}_y)\\
    \end{array}\right);
\end{align*}
now we compute the second derivative at directions $\vo{r}=(\vo{r}^t_x\vert\vo{r}_y^t)^t$, $\vo{s}=(\vo{s}^t_x\vert\vo{s}_y^t)^t$, dividing it into four terms according to the mixed partial differentiation required:
\begin{align*}
    &\scriptstyle{\sum_{k,m}
    \tfrac{\partial\ \ \ \ }{\partial x_m}\left(\begin{array}{c}                          
    -\delta_{jk}+B^{\alpha}_{jk}S'\left(\sum_{l=1}^NB_{jl}^{\alpha}x_l\right)\\
         \hline
    \varepsilon\delta_{jk}
    \end{array}\right)^{j\in\{1,\ldots,N\}}_{(\vo{0}_N,\vo{0}_N),0}(\vo{r}_x)_k(\vo{s}_x)_m
    + \sum_{k,m}
    \tfrac{\partial\ \ }{\partial x_m}
    \left(\begin{array}{c}
         -\delta_{jk}\\
         \hline
        -\varepsilon\delta_{jk}
    \end{array}\right)^{j\in\{1,\ldots,N\}}_{(\vo{0}_N,\vo{0}_N),0}(\vo{r}_y)_k(\vo{s}_x)_m}\\
    +&\scriptstyle{\sum_{k,m}
    \tfrac{\partial\ \ }{\partial y_m}\left(\begin{array}{c}            -\delta_{jk}+B^{\alpha}_{jk}S'\left(\sum_{l=1}^NB_{jl}^{\alpha}x_l\right)\\
         \hline
    \varepsilon\delta_{jk}
    \end{array}\right)^{j\in\{1,\ldots,N\}}_{(\vo{0}_N,\vo{0}_N),0}(\vo{r}_x)_k(\vo{s}_y)_m 
    + \sum_{k,m}
    \tfrac{\partial\ \ }{\partial y_m}
    \left(\begin{array}{c}
         -\delta_{jk}\\
         \hline
        -\varepsilon\delta_{jk}
    \end{array}\right)^{j\in\{1,\ldots,N\}}_{(\vo{0}_N,\vo{0}_N),0}(\vo{w}_y)_k(\vo{s}_y)_m}\\
    &=\sum_{k,m}
    \left(\begin{array}{c}    B^{\alpha}_{jk}B^{0}_{jm}S''\left(0\right)\\
         \hline
    \vo{0}_N
    \end{array}\right)^{j\in\{1,\ldots,N\}}(\vo{r}_x)_k(\vo{s}_x)_m+\vo{0}_{2N}=\sum_{k,m}\left(\begin{array}{c}    \vo{0}_N\\
         \hline
    \vo{0}_N
    \end{array}\right)=\vo{0}_{2N}.
\end{align*}
Finally, we compute the third derivative at directions $\vo{r}=(\vo{r}^t_x\vert\vo{r}_y^t)^t$, $\vo{s}=(\vo{s}^t_x\vert\vo{s}_y^t)^t$, $\vo{t}=(\vo{t}^t_x\vert\vo{t}_t^t)^t$, which consists of eight terms in a similar manner of how we have previously proceeded. However, as seen is the second derivative calculations, any mixed differentiation of the form
$$\dfrac{\partial^2\vo{f}}{\partial x_k\partial y_m},$$
as well as high order $y_j$-derivatives are all equal to zero. By means of Clairaut's theorem, we are therefore only required to compute the mixed $x_j$ term, which reduces to
\begin{align*}
    &\sum_{k,m,n}
    \tfrac{\partial\ \ \, }{\partial x_n}\left(\begin{array}{c}         B^{\alpha}_{jk}B^{\alpha}_{jm}S''\left(\sum_{l=1}^NB_{jl}^{\alpha}x_l\right)\\
         \hline
    0
    \end{array}\right)^{j\in\{1,\ldots,N\}}_{(\vo{0}_N,\vo{0}_N),0}(\vo{r}_x)_k(\vo{s}_x)_m(\vo{t}_x)_n\\
    =&\sum_{k,m,n}
    \left(\begin{array}{c}         B_{jk}B_{jm}B_{jn}S'''\left(0\right)\\
         \hline
    0
    \end{array}\right)^{j\in\{1,\ldots,N\}}(\vo{r}_x)_k(\vo{s}_x)_m(\vo{t}_x)_n=S'''(0)\left(\begin{array}{c}
         B\vo{r}_x\odot B\vo{s}_x\odot B\vo{t}_x  \\
         \hline
         \vo{0}_N 
    \end{array}\right).
\end{align*}
 Recall that the definition for coefficient $b$ requires us to evaluate at eigenvalues $\vo{r}=\vo{s}=\vo{w}$ and $\vo{t}=\cj{\vo{w}}$. By remembering equivalence \eqref{eq:rquad_eq} and the fact that $B=\acrit{\beta,1}(\varepsilon)I_N+\beta A$, we observe that
\begin{align*}
    B\vo{w}_x&=\acrit{\beta,1}(\varepsilon)\vo{w}_x+\beta A\vo{w}_x=\acrit{\beta,1}(\varepsilon)\vo{w}_x+(1-\acrit{\beta,1}(\varepsilon)-\nmax{1}+\tfrac{\varepsilon}{\varepsilon-\nmax{1}})\vo{w}_x=(1-\nmax{1}+\tfrac{\varepsilon}{\varepsilon-\nmax{1}})\vo{w}_x,\\
    B\cj{\vo{w}}_x&=\acrit{\beta,1}(\varepsilon)\cj{\vo{w}}_x+\beta A\cj{\vo{w}}_x=\acrit{\beta,1}(\varepsilon)\cj{\vo{w}}_x+\cj{(1-\acrit{\beta,1}(\varepsilon)-\nmax{1}+\tfrac{\varepsilon}{\varepsilon-\nmax{1}})}\cj{\vo{w}}_x=(\cj{1-\nmax{1}+\tfrac{\varepsilon}{\varepsilon-\nmax{1}}})\cj{\vo{w}}_x,
\end{align*}
and therefore,
$$\left(\begin{array}{c} B\vo{w}_x\odot B\vo{w}_x\odot B\cj{\vo{w}}_x \\
         \hline
         \vo{0}_N 
    \end{array}\right)=(1-i\abs{\lambda}+\tfrac{\varepsilon}{\varepsilon-i\abs{\lambda}})\abs{1-i\abs{\lambda}+\tfrac{\varepsilon}{\varepsilon-i\abs{\lambda}}}^2\left(\begin{array}{c}
         \vo{w}_x\odot\vo{w}_x\odot\cj{\vo{w}}_x \\
         \hline
         \vo{0}_N 
    \end{array}\right).$$
We then substitute the expression for the third directional derivative in the definition for coefficient $b$. We obtain
\begin{align*}
    b &= \tfrac{1}{16}\mathrm{Re}\left(\ip{(\vo{v}_x^t\vert\vo{v}_y^t)^t}{(1-\nmax{1}+\tfrac{\varepsilon}{\varepsilon-\nmax{1}})\abs{1-\nmax{1}+\tfrac{\varepsilon}{\varepsilon-\nmax{1}}}^2S'''(0)((\vo{w}_x\odot\vo{w}_x\odot\cj{\vo{w}}_x)^t\vert\vo{0}_N^t)^t}\right) \\
    &= \tfrac{1}{16}\abs{1-\nmax{1}+\tfrac{\varepsilon}{\varepsilon-\nmax{1}}}^2S'''(0)\mathrm{Re}\left(\left(1-\nmax{1}+\tfrac{\varepsilon}{\varepsilon-\nmax{1}}\right)\ip{\vo{v}_x}{\vo{w}_x\odot\vo{w}_x\odot\cj{\vo{w}}_x}+0\right),
\end{align*}
    which concludes the proof.
\end{proof}

\critfirst*

\begin{proof}
     We can rescale vector $\vo{w}_x$ so that $\abs{(\vo{w}_x)_j}=1$ for every $j\in\{1,\ldots,N\}$. This reduces formula~\eqref{eq:hopf_coeff_b} to
     \begin{align*}
     b&=\tfrac{1}{16}\abs{1-\nmax{1}+\tfrac{\varepsilon}{\varepsilon-\nmax{1}}}^2S'''(0)\sum_{j=1}^N\mathrm{Re}\left((1-\nmax{1}+\tfrac{\varepsilon}{\varepsilon-\nmax{1}})(\cj{\vo{v}}_x)_j(\vo{w}_x)_j\abs{(\vo{w}_x)_j}^2\right)\\
     &=\dfrac{1}{16}\abs{1-\nmax{1}+\dfrac{\varepsilon}{\varepsilon-\nmax{1}}}^2S'''(0)\sum_{j=1}^N\mathrm{Re}\left(\left(1-\nmax{1}+\dfrac{\varepsilon}{\varepsilon-\nmax{1}}\right)(\cj{\vo{v}}_x)_j(\vo{w}_x)_j\right)\\
     &=\dfrac{1}{16}\abs{1-\nmax{1}+\dfrac{\varepsilon}{\varepsilon-\nmax{1}}}^2S'''(0)\mathrm{Re}\left(\left(1-\nmax{1}+\dfrac{\varepsilon}{\varepsilon-\nmax{1}}\right)\cj{\vo{v}}_x^t\vo{w}_x\right)\\
     &=\dfrac{1}{8}\abs{1-i\abs{\lambda}+\dfrac{\varepsilon}{\varepsilon-\nmax{1}}}^2S'''(0)\mathrm{Re}\left(\left(1-\nmax{1}+\dfrac{\varepsilon}{\varepsilon-\nmax{1}}\right)\left(\dfrac{(\varepsilon-\nmax{1})^2}{(\varepsilon-\nmax{1})^2-\varepsilon}\right)\right).
     \end{align*}
     As locally odd sigmoid function $S$ is assumed to satisfy $S'''(0)<0$ (see Section \ref{sec:math}), it is clear that the sign of coefficient $b$ is opposite to that of the real-part term in the previous expression, whose argument $z$ is given by
     \begin{equation}\label{eq:cbsign}
    z=\left(1-\nmax{1}+\dfrac{\varepsilon}{\varepsilon-\nmax{1}}\right)\left(\dfrac{(\varepsilon-\nmax{1})^2}{(\varepsilon-\nmax{1})^2-\varepsilon}\right).  
    \end{equation}
     To determine the sign of $\mathrm{Re}(z)$ we need to substitute the values for $\abs{\nmax{1}}$, as before, by splitting the proof into a real and non-real case. Recall that in the real case we get the expression $\abs{\nmax{1}}=\sqrt{\varepsilon(1-\varepsilon)}$ from Equation \eqref{eq:rmodul}. Then
    $$z=(1+\varepsilon)\dfrac{\varepsilon-\tfrac{1}{2}-i\sqrt{\varepsilon(1-\varepsilon)}}{\varepsilon-1-i\sqrt{\varepsilon(1-\varepsilon)}}=(1+\varepsilon)\dfrac{\tfrac{1}{2}(1-\varepsilon)-\tfrac{3}{2}i\sqrt{\varepsilon(1-\varepsilon)}}{(1-\varepsilon)^2+\varepsilon(1-\varepsilon)},$$
    and thus $\mathrm{Re}(z)>0$ for $\varepsilon\in(0,1)$, therefore $b<0$. For the non-real case we directly evaluate at the singular limit,
\begin{align*}
    \lim_{\varepsilon\to0^+}&\mathrm{Re}\left(\left(1-i\abs{\lambda}+\dfrac{\varepsilon}{\varepsilon-i\abs{\lambda}}\right)\left(\dfrac{(\varepsilon-i\abs{\lambda})^2}{(\varepsilon-i\abs{\lambda})^2-\varepsilon}\right)\right) 
    =\mathrm{Re}\left(\left(1-iv+\dfrac{0}{0-iv}\right)\left(\dfrac{(0-iv)^2}{(0-iv)^2-0}\right)\right)\\
    =&\mathrm{Re}\left(\left(1-iv\right)\left(\dfrac{-v^2}{-v^2}\right)\right)=\mathrm{Re}(1-iv)=1>0.     
\end{align*}
This concludes that term $z$ from \eqref{eq:cbsign} has positive real part, thus proving that $b$ is negative for small enough values of $\varepsilon>0$, therefore making the bifurcation supercritical, which is what we wanted to prove.
\end{proof}

\end{document}